\numberwithin{equation}{section}
\newcommand{\FF}{\mathbb F}
\newcommand{\PP}{\mathbb P}
\newcommand{\QQ}{\mathbb Q}
\newcommand{\ZZ}{\mathbb Z} 
\newcommand{\Zhat}{\widehat\ZZ}
\newcommand{\calA}{\mathcal A}
\newcommand{\calF}{\mathcal F}
\newcommand{\calN}{\mathcal N}
\newcommand{\calS}{\mathcal S}
\newcommand{\g}{\mathfrak g}
 \def\Gal{\operatorname{Gal}}
\def\End{\operatorname{End}}
\def\cyc{{\operatorname{cyc}}}
\def\tr{\operatorname{tr}}
\def\Gal{\operatorname{Gal}}
\def\ord{\operatorname{ord}}
\def\sl{\mathfrak{sl}}
\def\gl{\mathfrak{gl}}
\def \GL{\operatorname{GL}}  
\def \PGL{\operatorname{PGL}}
\def \SL{\operatorname{SL}}
\def\Aut{\operatorname{Aut}} 
\def\End{\operatorname{End}}
\def\Frob{\operatorname{Frob}}
\def\lcm{\operatorname{lcm}}
\newcommand{\tors}{{\operatorname{tors}}}
\def\tr{\operatorname{tr}}
\def\lcm{\operatorname{lcm}}
 \def \Aut {\operatorname{Aut}}
\definecolor{purple}{rgb}{1,0,1}
\def\bbar#1{\setbox0=\hbox{$#1$}\dimen0=.2\ht0 \kern\dimen0 
\overline{\kern-\dimen0 #1}}
\newcommand{\Qbar}{{\overline{\mathbb Q}}}
\newcommand{\FFbar}{\overline{\FF}}
\newcommand{\defi}[1]{\textsf{#1}} 
\newtheorem{thm}{Theorem}[section]
\newtheorem{lemma}[thm]{Lemma}
\newtheorem{prop}[thm]{Proposition}
\theoremstyle{definition}
\newtheorem{conj}[thm]{Conjecture}
\theoremstyle{remark}
\newtheorem{remark}[thm]{Remark}
\newenvironment{romanenum}{\hfill \begin{enumerate} }{\end{enumerate}}
\definecolor{webcolor}{rgb}{0.8,0,0.2}
\definecolor{webbrown}{rgb}{.6,0,0}
\begin{document}
\title{Bounds for Serre's open image theorem}
\subjclass[2000]{Primary 11G05; Secondary 11F80}

\keywords{elliptic curves, Galois representations, open image theorem}
\author{David Zywina}
\email{zywina@math.upenn.edu}
\address{Department of Mathematics, University of Pennsylvania, Philadelphia, PA 19104-6395, USA}
\date{February 22, 2011}

\begin{abstract}
Let $E$ be an elliptic curve over the rationals without complex multiplication.   The absolute Galois group of $\QQ$ acts on the group of torsion points of $E$, and this action can be expressed in terms of a Galois representation $\rho_E\colon \Gal(\Qbar/\QQ) \to \GL_2(\Zhat)$.   A renowned theorem of Serre says that the image of $\rho_E$ is open, and hence has finite index, in $\GL_2(\Zhat)$.      We give the first general bounds of this index in terms of basic invariants of $E$.  For example, the index $[\GL_2(\Zhat):\rho_E(\Gal(\Qbar/\QQ))]$ can be bounded by a polynomial function of the logarithmic height of the $j$-invariant of $E$.    As an application of our bounds, we settle an open question on the average of constants arising from the Lang-Trotter conjecture.
\end{abstract}

\maketitle

\section{Introduction}

\subsection{Main theorem}
Consider an elliptic curve $E$ defined over $\QQ$ which has no complex multiplication.    For each positive integer $m$, let $E[m]$ be the group of $m$-torsion points in $E(\Qbar)$; it is a free $\ZZ/m\ZZ$-module of rank $2$.  The group $E[m]$ has an action by the absolute Galois group $\Gal_\QQ:=\Gal(\Qbar/\QQ)$ which, after a choice of basis, can be expressed in terms of a Galois representation
$\rho_{E,m} \colon \Gal_\QQ \to \Aut(E[m])\cong \GL_2(\ZZ/m\ZZ)$.   Combining these representations together, we obtain a single Galois representation
\[
\rho_E \colon \Gal_\QQ \to \Aut(E_{\tors})\cong \GL_2(\Zhat)
\]
which describes the Galois action on all the torsion points of $E$ (where $\Zhat$ is the profinite completion of $\ZZ$).  

A famous theorem of Serre \cite{MR0387283} says that $\rho_E(\Gal_\QQ)$ is open, and hence of finite index, in $\GL_2(\Zhat)$.  Serre's theorem is qualitative in nature, and it does not give an explicit bound for the index.  Our main theorem gives such a bound.

\begin{thm} \label{T:main}
Let $E$ be a non-CM elliptic curve defined over $\QQ$.    Let $j_E$ be the $j$-invariant of $E$ and let $h(j_E)$ be its logarithmic height.  Let $N$ be the product of primes for which $E$ has bad reduction and let $\omega(N)$ be the number of distinct prime divisors of $N$.
\begin{romanenum}
\item
There are absolute constants $C$ and $\gamma$ such that 
\[
\big[\GL_2(\Zhat): \rho_E(\Gal_\QQ)\big] \leq C \big( \max\{1, h(j_E)\} \big)^\gamma.
\]
\item  There is an absolute constant $C$ such that
\[
\big[\GL_2(\Zhat): \rho_E(\Gal_\QQ)\big] \leq C \Big( 68 N (1+\log\log N)^{1/2} \Big)^{24\omega(N)}.
\]
\item  Assume that the Generalized Riemann Hypothesis holds.   Then there is an absolute constant $C$ such that
\[
\big[\GL_2(\Zhat): \rho_E(\Gal_\QQ)\big] \leq  \Big( C (\log N) (\log\log(2N))^3 \Big)^{24\omega(N)}.
\]
\end{romanenum}
\end{thm}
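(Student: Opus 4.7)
The plan is to bound the global index $[\GL_2(\Zhat):\rho_E(\Gal_\QQ)]$ by analyzing, for each prime $\ell$, the $\ell$-adic image $G_\ell := \rho_{E,\ell^\infty}(\Gal_\QQ) \subseteq \GL_2(\ZZ_\ell)$, and then combining the local information via an entanglement argument. Serre's theorem implies $G_\ell = \GL_2(\ZZ_\ell)$ for all but finitely many $\ell$; letting $S$ be the set of exceptional primes, one has
\[
\big[\GL_2(\Zhat):\rho_E(\Gal_\QQ)\big] \ll \prod_{\ell \in S} \big[\GL_2(\ZZ_\ell):G_\ell\big],
\]
where the implicit constant absorbs an entanglement correction between the different $G_\ell$. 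The three tasks are thus: bound $\max S$, bound $[\GL_2(\ZZ_\ell):G_\ell]$ for each $\ell \in S$, and bound the entanglement.

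The heart of the proof is bounding $\max S$, which requires a different argument in each of the three regimes. For $\ell \geq 5$, the maximal subgroups of $\GL_2(\FF_\ell)$ that can contain the mod $\ell$ image of $\rho_E$ fall into three classes: Borel subgroups (equivalent to $E$ admitting a $\QQ$-rational cyclic $\ell$-isogeny), normalizers of a split or non-split Cartan subgroup, and subgroups with exceptional projective image ($A_4$, $S_4$, $A_5$). For part (i), I would apply the Masser--Wüstholz (or refined Gaudron--Rémond) isogeny estimate to bound the degree of a rational $\ell$-isogeny polynomially in $h(j_E)$, and use explicit height bounds on rational points of the modular curves $X_{\mathrm{sp}}^+(\ell)$ and $X_{\mathrm{ns}}^+(\ell)$ for the Cartan cases. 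For part (ii), I would instead use isogeny-degree bounds in terms of the conductor $N$ (via reduction-type considerations or Faltings--Szpiro-style estimates) to handle the Borel case, together with classical bounds for the other cases. For part (iii), effective Chebotarev under GRH produces small primes $p$ of good reduction whose Frobenius data $(a_p \bmod \ell, p \bmod \ell)$ cannot simultaneously lie in any maximal proper subgroup; forcing this to fail yields a bound on $\ell$ polylogarithmic in $N$.

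For each $\ell \in S$, the $\ell$-adic index is controlled via Serre's lifting lemma: for $\ell \geq 5$, if the mod $\ell$ reduction of $G_\ell$ contains $\SL_2(\FF_\ell)$ then $G_\ell$ contains $\SL_2(\ZZ_\ell)$, and since $\det\rho_E$ is the full cyclotomic character we actually get $G_\ell = \GL_2(\ZZ_\ell)$. Thus any exceptional $\ell \geq 5$ has proper mod $\ell$ image, and $[\GL_2(\ZZ_\ell):G_\ell]$ is bounded by the index at level $\ell^k$ for a small $k$ depending on the type of maximal subgroup; the primes $\ell \in \{2,3\}$ are handled as bounded special cases. For entanglement, Goursat's lemma together with the simplicity of $\PSL_2(\FF_\ell)$ for $\ell \geq 5$ implies that common quotients of $G_\ell$ and $G_{\ell'}$ across distinct primes are abelian, hence by the Weil pairing controlled by cyclotomic subextensions, contributing only a bounded factor. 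The main obstacle will be the Cartan normalizer cases in bounding $\max S$: ruling out non-split Cartan images of non-CM elliptic curves is notoriously delicate, and the effective bounds needed for part (i) require a careful height-machine argument on the $X_{\mathrm{ns}}^+(\ell)$, while part (ii) demands effective isogeny bounds that push beyond Mazur's finite list.
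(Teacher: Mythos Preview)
Your proposal has a genuine gap in the step bounding $[\GL_2(\ZZ_\ell):G_\ell]$ for $\ell \in S$. You assert that this index ``is bounded by the index at level $\ell^k$ for a small $k$ depending on the type of maximal subgroup,'' but this is unjustified and is in fact the heart of the problem. Knowing that $\rho_{E,\ell}(\Gal_\QQ)$ lies in a Cartan normalizer $C^+(\ell)$ says nothing about whether $\rho_{E,\ell^n}(\Gal_\QQ)$ lies in $C^+(\ell^n)$ for large $n$; a priori this containment could persist to arbitrarily high level, making $[\GL_2(\ZZ_\ell):G_\ell]$ on the order of $\ell^{2n}$ for $n$ unbounded in terms of $\ell$ alone. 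Your horizontal bound on $\max S$ does not control this vertical depth, so the product $\prod_{\ell\in S}[\GL_2(\ZZ_\ell):G_\ell]$ is not bounded by your argument.

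The paper supplies exactly the missing mechanism: a group-theoretic dichotomy (Proposition~\ref{P:dichotomy main}) showing that for $\ell>17$, $\ell\neq 37$, and every $n\geq 1$, either $\rho_{E,\ell^n}(\Gal_\QQ)\subseteq C^+(\ell^n)$ or $G_\ell \supseteq I+\ell^{4n}M_2(\ZZ_\ell)$. One must then bound, for each exceptional $\ell$, the largest $n$ for which the first alternative holds. This is where the Masser--W\"ustholz isogeny theorem (part (i)) and Serre's quadratic-character method (parts (ii), (iii)) enter---but applied at level $\ell^n$, not just $\ell$. For instance, the character $\varepsilon_\ell$ produces a small prime $p$ with $a_p(E)\neq 0$ and $\varepsilon_\ell(p)=-1$, whence $\ell^n\mid a_p(E)$ and so $\ell^n\leq 2\sqrt{p}$. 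These divisibilities are packaged into a single integer $M$ (Propositions~\ref{P:Serre approach} and~\ref{P:MW approach}), and Proposition~\ref{P:put together} turns the bound on $M$ into the global index bound via the dichotomy. Your treatment of part (ii) is also off-target: Mazur's theorem eliminates Borel images outright for $\ell>37$, so no conductor-dependent isogeny bound is needed there; all the work in (ii) and (iii) is in the Cartan-normalizer case at higher prime-power levels.
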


These seem to be the first general bounds of Serre's index that hold for \emph{all} non-CM elliptic curves $E/\QQ$ (Serre and others have computed the index for several examples, and Jones \cite{Jones-AAECASC} has shown that the index is $2$ for ``most'' $E/\QQ$).  

\subsection{Background and overview}
Fix a non-CM elliptic curve $E$ over $\QQ$.   We first review the well-studied ``horizontal'' situation, that is, we will consider the groups $\rho_{E,\ell}(\Gal_\QQ)\subseteq \GL_2(\ZZ/\ell\ZZ)$ for a varying prime $\ell$.   

Suppose that $\rho_{E,\ell}$ is not surjective; its image is contained in some maximal subgroup $M$ of $\GL_2(\ZZ/\ell\ZZ)$.  If $\ell>17$ and $\ell\neq 37$, then $\rho_{E,\ell}(\Gal_\QQ)$ is contained in the normalizer of a Cartan subgroup but is not contained in any Cartan subgroup (this uses a major theorem of Mazur to rule out the case where $M$ is a Borel subgroup) \cite{MR644559}*{\S8.4}.   The case where $M$ is the normalizer of a \emph{split} Cartan subgroup has recently been ruled out by Bilu and Parent for all primes $\ell \geq c_0$ where $c_0$ is some absolute constant \cite{Bilu-Parent2011}.

Let $c(E)\in [1,+\infty]$ be the smallest number for which $\rho_{E,\ell}(\Gal_\QQ)=\GL_2(\ZZ/\ell\ZZ)$ for all primes $\ell > c(E)$.   The main task in \cite{MR0387283} is to show that $c(E)$ is finite.  Serre then asked whether the constant $c(E)$ can be bounded independent of $E$ \cite{MR0387283}*{\S4.3}, and moreover whether we always have  $c(E)\leq 37$ \cite{MR644559}*{p.~399} (this would be best possible since there are non-CM elliptic curves over $\QQ$ which have a rational isogeny of degree 37 \cite{MR0366930}).  If $c(E)$ could be bounded independent of $E$, then there would exist an absolute constant $C$ such that $[\GL_2(\Zhat): \rho_E(\Gal_\QQ)] \leq C$ (see Remark~\ref{R:uniform index}).

Under GRH, Serre proved that $c(E)\leq c \log N (\log\log 2N)^3$ for some absolute constant $c$ \cite{MR644559} (we will discuss his approach in \S\ref{S:quadratic}).   Using a variant of Serre's method, Kraus proved that $c(E) \leq 68 N (1+\log\log N)^{1/2}$ under the now superfluous assumption that $E/\QQ$ is modular \cite{MR1360773}; a similar bound was given by Cojocaru \cite{MR2118760}.    Masser and W\"ustholz have proved that $c(E)\leq C \big( \max\{1, h(j_E)\} \big)^\gamma$ where $C$ and $\gamma$ are effective absolute constants \cite{MR1209248} (see \S\ref{S:MW} for related material).     The influence of these results on this paper should be apparent from the nature of the bounds in Theorem~\ref{T:main}.\\

We now consider the ``vertical'' situation, that is, we will examine the groups $\rho_{E,\ell^n}(\Gal_\QQ)\subseteq \GL_2(\ZZ/\ell^n\ZZ)$ for a fixed prime $\ell$ and increasing $n$.   Combining these representations together, we obtain an $\ell$-adic representation
\[
 \rho_{E,\ell^\infty}\colon \Gal_\QQ\to \GL_2(\ZZ_\ell).
 \]    
In \cite{MR1484415}*{IV-11}, Serre proved that the group $G_\ell:=\rho_{E,\ell^\infty}(\Gal_\QQ)$ is open in $\GL_2(\ZZ_\ell)$.  He accomplishes this by showing that $G_\ell$, viewed as an $\ell$-adic Lie group, has the largest possible Lie algebra.     Unfortunately, this does not give a bound for the index $[\GL_2(\ZZ_\ell): G_\ell]$.

Consider a prime $\ell$ satisfying $\ell>17$ and $\ell\neq 37$.   From the horizontal setting, we know that $\rho_{E,\ell}(\Gal_\QQ)$ is either $\GL_2(\ZZ/\ell\ZZ)$ or is contained in the normalizer of a Cartan subgroup.   If $\rho_{E,\ell}(\Gal_\QQ)=\GL_2(\ZZ/\ell\ZZ)$, then one can show that $G_\ell = \GL_2(\ZZ_\ell)$; so we will focus on the second case.   The following key proposition gives constraints on the image of $\rho_{E,\ell^\infty}$; its proof is mainly group theoretic and will make up most of \S\ref{S:group theory}.

\begin{prop} \label{P:dichotomy main}
Let $E$ be an non-CM elliptic curve over $\QQ$.  Let $\ell$ be a prime for which $\ell>17$ and $\ell \neq 37$.   Then for every positive integer $n$, one of the following holds:
\begin{itemize}
\item $\rho_{E,\ell^n}(\Gal_\QQ)$ is contained in the normalizer of a Cartan subgroup of $\GL_2(\ZZ/\ell^n\ZZ)$,
\item $\rho_{E,\ell^\infty}(\Gal_\QQ) \supseteq I+\ell^{4n}M_2(\ZZ_\ell).$  
\end{itemize}
\end{prop}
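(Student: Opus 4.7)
The plan is to split on the mod-$\ell$ image of $\rho_E$. By the horizontal classification recalled in the introduction, for $\ell > 17$ and $\ell \neq 37$ the image $\rho_{E,\ell}(\Gal_\QQ)$ is either all of $\GL_2(\FF_\ell)$ or contained in the normalizer $N(C)$ of some Cartan subgroup $C \subseteq \GL_2(\FF_\ell)$. In the surjective case, a standard lifting argument (valid for $\ell \geq 5$ since $\SL_2(\FF_\ell)$ is perfect) upgrades this to $\rho_{E,\ell^\infty}(\Gal_\QQ) = \GL_2(\ZZ_\ell)$, so the second alternative of the proposition holds trivially for every $n$. I therefore focus on the case $\rho_{E,\ell}(\Gal_\QQ) \subseteq N(C)$: fix $n \geq 1$, set $G := \rho_{E,\ell^\infty}(\Gal_\QQ)$ and $G_n := G \bmod \ell^n$, and assume $G_n$ is not contained in the normalizer of any Cartan subgroup of $\GL_2(\ZZ/\ell^n\ZZ)$. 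The task reduces to showing $G \supseteq I + \ell^{4n} M_2(\ZZ_\ell)$.

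My main tool is the congruence filtration $G(k) := G \cap (I + \ell^k M_2(\ZZ_\ell))$ with graded quotients $\g_k \subseteq M_2(\FF_\ell)$ obtained from $I + \ell^k A \mapsto A \bmod \ell$; these satisfy $[\g_k,\g_{k'}] \subseteq \g_{k+k'}$ under the matrix bracket. Via the $\ell$-adic exponential (valid for $\ell \geq 3$), the target containment reduces to showing $\g_k = M_2(\FF_\ell)$ for all $k \geq 4n$, which by the bracket relations follows from producing enough generators at depth $\leq 4n$. Setting $H := G \cap \pi^{-1}(C)$ with $\pi: \GL_2(\ZZ_\ell) \to \GL_2(\FF_\ell)$, the group $H$ has index $\leq 2$ in $G$ and its mod-$\ell$ image lies in the abelian group $C$, so $[H,H] \subseteq I + \ell M_2(\ZZ_\ell)$. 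The hypothesis that $G_n$ escapes every Cartan normalizer of $\GL_2(\ZZ/\ell^n\ZZ)$ should be parametrized over all Cartan lifts $\tilde{C} \subseteq \GL_2(\ZZ_\ell)$ of $C$ (an affine space over $\ZZ_\ell$ of bounded dimension) and converted into an existence statement: there is $g \in G$ of the form $g \equiv w \cdot (I + \ell^{k_0} X) \pmod{\ell^n}$ with $w \in N(\tilde{C})$, $X \in M_2(\ZZ_\ell)$ off-Cartan, and $k_0 \leq n$, for $\tilde{C}$ chosen to minimize this deviation.

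Commuting such $g$ with a Cartan element $c \in H \cap \tilde{C}$ whose mod-$\ell$ eigenvalues are distinct produces an element of $G(k_0)$ whose graded image in $\g_{k_0}$ is a nonzero off-Cartan direction in $M_2(\FF_\ell)$; iterating the bracket against Cartan generators then fills out $\sl_2(\FF_\ell)$ inside the graded pieces of depth at most $4n$, and the scalar part is picked up from the non-CM Serre openness applied to the abelian Cartan part of $H$. The main obstacle I expect is the quantitative bookkeeping: converting the abstract hypothesis ``$G_n$ lies in no Cartan normalizer'' into the honest valuation bound $k_0 \leq n$ (requiring a careful parametrization of the moduli of Cartan lifts modulo $\ell^n$), handling the split and non-split cases uniformly, and tracking the depth lost at each commutator step so that the exponent lands at exactly $4n$. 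The factor $4$ appears to package the index $2$ in $N(C)/C$ with one depth-doubling step in the bracket argument, and pinning it down sharply is where most of the technical work will go.
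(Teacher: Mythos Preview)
Your opening reduction (surjective mod $\ell$ $\Rightarrow$ $G=\GL_2(\ZZ_\ell)$; otherwise work inside the normalizer of a Cartan) matches the paper exactly, as does the use of the congruence filtration $\g_k\subseteq\gl_2(\FF_\ell)$. But from there the two arguments diverge, and your plan is missing the structural idea that makes the paper's proof go through.

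The paper does \emph{not} try to locate a single ``off-Cartan'' witness $g$ at controlled depth and then iterate commutators. Instead it observes that each $\g_k$ is a $G(\ell)$-submodule of $\gl_2(\FF_\ell)$ under conjugation, and that $\gl_2(\FF_\ell)$ decomposes as $V_1\oplus V_2\oplus V_3$ with $V_1=\FF_\ell I$, $V_2$ the trace-zero part of the Cartan, and $V_3$ an irreducible two-dimensional piece (Lemma~\ref{L:easy rep}). Together with $\tr(\g_n)=\FF_\ell$ (forced by $\det G=\ZZ_\ell^\times$), this leaves only four possibilities for $\g_n$, indexed by $\dim_{\FF_\ell}\g_n\in\{1,2,3,4\}$. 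The cases $\dim=4$ and $\dim=3$ give $G\supseteq I+\ell^n M_2(\ZZ_\ell)$ or $G\supseteq I+\ell^{2n}M_2(\ZZ_\ell)$ directly (the latter because $\g_n=V_1\oplus V_3$ fails to be a Lie subalgebra, forcing $\g_{2n}\supsetneq\g_n$). The cases $\dim=1,2$ are handled by showing, via a careful cyclic-group argument (Lemma~\ref{L:centralizer}(iv)), that $G(\ell^n)$ actually lands in the normalizer of a \emph{specific} Cartan $C(\ell^n)$ determined by a fixed $\alpha\in G$. The exponent $4n$ arises precisely from the chain $\dim\g_n=2\Rightarrow\dim\g_{2n}\geq 3\Rightarrow\dim\g_{4n}=4$; your guess that it encodes the index $2$ of $C$ in $N(C)$ is not what is happening.

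The genuine gap in your plan is the step you yourself flag: turning ``$G(\ell^n)$ lies in no Cartan normalizer'' into an element at depth $k_0\leq n$ with off-Cartan graded part, uniformly over all Cartan lifts. Parametrizing the moduli of Cartan lifts and minimizing over them is not how one gets a clean bound here; in the paper the contrapositive is proved instead (small $\g_n$ $\Rightarrow$ containment in one explicit $C^+(\ell^n)$), and this relies on the module decomposition to pin down $\mathfrak{s}_n$ exactly, not on any search over lifts. Your commutator-iteration scheme also has no mechanism bounding the number of steps, so there is no reason the depth should stop at $4n$. Finally, invoking ``non-CM Serre openness'' to pick up the scalar direction is circular in this context; the scalar part comes for free from $\det(G)=\ZZ_\ell^\times$ via Lemma~\ref{L:trace}.
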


We are thus led to consider the problem of effectively bounding the primes $\ell$ and positive integers $n$ for which $\rho_{E,\ell^n}(\Gal_\QQ)$ is contained in the normalizer of a Cartan subgroup.   We will use the methods of the papers mentioned in the horizontal setting which dealt with the case $n=1$.   

Finally, to obtain our bound for $[ \GL_2(\Zhat) : \rho_E(\Gal_\QQ) ]$ we will need the following result which we will prove in \S\ref{S:main proof}.

\begin{prop} \label{P:put together}  Let $E$ be an elliptic curve over $\QQ$.   Let $M$ be a positive integer with the following property: if $\ell$ is a prime greater than 17 and not 37 and $n\geq 1$ is an integer for which $\rho_{E,\ell^n}(\Gal_\QQ)$ is contained in the normalizer of a Cartan subgroup, then $\ell^n$ divides $M$.   With such an $M$, there is an absolute constant $C$ such that $[ \GL_2(\Zhat) : \rho_E(\Gal_\QQ) ] \leq C M^{24}$.
\end{prop}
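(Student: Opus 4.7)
The approach is to first reduce the problem to bounding $[\SL_2(\Zhat) : H_0]$, where $H := \rho_E(\Gal_\QQ)$ and $H_0 := H \cap \SL_2(\Zhat)$, and then carry out a local-global analysis.  Since $\det\circ\rho_E$ is the cyclotomic character it surjects onto $\Zhat^\times$, which gives $[\GL_2(\Zhat) : H] = [\SL_2(\Zhat) : H_0]$.  Passing from $\GL_2$ to $\SL_2$ is crucial for obtaining the correct exponent: the three-dimensional congruence filtration of $\SL_2$ (versus four-dimensional for $\GL_2$) is precisely what yields the exponent $24$ rather than $32$ in the final bound.

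For each prime $\ell > 17$ with $\ell \neq 37$, set $n := v_\ell(M) + 1$.  Since $\ell^n \nmid M$, the defining property of $M$ forces $\rho_{E,\ell^n}(\Gal_\QQ)$ not to lie in the normalizer of a Cartan subgroup, so Proposition~\ref{P:dichotomy main} gives $G_\ell := \rho_{E,\ell^\infty}(\Gal_\QQ) \supseteq I + \ell^{4n} M_2(\ZZ_\ell)$.  Consequently $G_\ell \cap \SL_2(\ZZ_\ell)$ has index at most $|\SL_2(\ZZ/\ell^{4n}\ZZ)| < \ell^{12n} = \ell^{12(v_\ell(M)+1)}$ in $\SL_2(\ZZ_\ell)$.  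In the special case $\ell \nmid M$, the horizontal results cited in the introduction (Mazur's theorem ruling out Borels, Serre's classification of the exceptional types, and Bilu--Parent for the split Cartan) force $\rho_{E,\ell}$ to be surjective; combined with the standard lifting result for $\ell \geq 5$, this upgrades to $G_\ell = \GL_2(\ZZ_\ell)$, so the local contribution is trivial.

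To combine the local bounds into one on $[\SL_2(\Zhat) : H_0]$, note that $H_0 \subseteq \prod_\ell (G_\ell \cap \SL_2(\ZZ_\ell))$, so $[\SL_2(\Zhat) : H_0]$ factors as the product of local indices times an entanglement term.  A Goursat-type argument bounds the latter: for $\ell \geq 5$ the group $\SL_2(\ZZ_\ell)$ is topologically perfect with unique nonabelian simple quotient $\PSL_2(\FF_\ell)$, and these quotients are pairwise nonisomorphic across primes, so the entanglement among the large primes contributes only an absolutely bounded factor.  The product of local bounds over $\ell \mid M$ with $\ell > 17$ and $\ell \neq 37$ is
\[
\prod_{\substack{\ell\mid M\\ \ell>17,\,\ell\neq 37}} \ell^{12(v_\ell(M)+1)} \;\leq\; \Big(\prod_{\ell\mid M}\ell^{v_\ell(M)}\Big)^{12}\cdot\Big(\prod_{\ell\mid M}\ell\Big)^{12} \;\leq\; M^{12}\cdot M^{12} \;=\; M^{24},
\]
which is the source of the exponent $24$.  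The principal obstacle is handling the eight small primes $\ell \in \{2,3,5,7,11,13,17,37\}$, at which the hypothesis on $M$ provides no direct information; one must either refine Proposition~\ref{P:dichotomy main} to cover these primes (at the cost of worse absolute constants) or supply a separate group-theoretic bound whose $E$-dependence is absorbed into the absolute constant $C$.
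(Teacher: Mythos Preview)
Your overall architecture---reduce to $\SL_2$ via the cyclotomic character, invoke Proposition~\ref{P:dichotomy main} at each large prime with $n=v_\ell(M)+1$, and multiply local indices---is the same as the paper's.  But two of the steps you flag as routine are in fact the substance of the proof, and as written they are genuine gaps.

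\textbf{Small primes.}  You explicitly leave the primes $\ell\in\{2,3,5,7,11,13,17,37\}$ unresolved.  This is not a matter of ``worse absolute constants'': a priori there is no reason the $\ell$-adic index $[\SL_2(\ZZ_\ell):G_\ell\cap\SL_2(\ZZ_\ell)]$ should be bounded independently of $E$ at these primes, and Proposition~\ref{P:dichotomy main} gives no information there.  The paper supplies a separate argument (Lemma~\ref{L:small primes}): for each fixed $\ell$ there are only finitely many congruence subgroups of $\SL_2(\ZZ)$ of genus $\leq 1$, so all but finitely many of the relevant modular curves $X_H$ have genus $\geq 2$, and Faltings' theorem then bounds the set of $j$-invariants for which the image can be small.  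This is the source of the ineffectivity of the constant $C$, and without it (or a substitute) the proposition is not proved.

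\textbf{Entanglement.}  Your claim that ``the entanglement among the large primes contributes only an absolutely bounded factor'' is not justified and is likely false as stated.  The argument you sketch---that $\SL_2(\ZZ_\ell)$ is perfect with unique simple quotient $\PSL_2(\FF_\ell)$---controls entanglement among primes where the local image is \emph{all} of $\SL_2(\ZZ_\ell)$.  But for $\ell\in\mathcal{P}$ (primes dividing $M$) the local image is a proper subgroup, which can have abelian quotients: indeed the character $\varepsilon_\ell$ of \S\ref{S:quadratic} is exactly such a quotient, and coincidences among the $\varepsilon_\ell$ produce entanglement of order $2^{|\mathcal{P}|-O(1)}$, which is not absolute.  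The paper avoids this by passing further down to $S=\rho_E(\Gal(\Qbar/K\QQ^{\cyc}))$ with $K=\prod_{\ell\in\mathcal{P}\cup Q}\QQ(E[\ell])$; the projections $S_\ell$ for $\ell\in\mathcal{P}\cup Q$ are then pro-$\ell$, hence mutually independent, and a Goursat argument shows $S=\prod_\ell S_\ell$ exactly.  The extra index $[K:K\cap\QQ^{\cyc}]$ introduced by this passage is then cancelled against the factors $\ell^{\ord_\ell([K:K\cap\QQ^{\cyc}])}$ appearing in the local bounds.

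A minor point: for $\ell\nmid M$ with $\ell>17$, $\ell\neq 37$, you do not need Bilu--Parent.  The hypothesis on $M$ already says $\rho_{E,\ell}(\Gal_\QQ)$ is not contained in the normalizer of a Cartan, and Serre's classification (Lemmes~16--18 of \cite{MR644559}) then forces surjectivity directly.
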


For example, in Proposition~\ref{P:Serre approach}(ii) we will prove that there is an $M$ as in Proposition~\ref{P:put together} satisfying $M\leq \big( 68 N (1+\log\log N)^{1/2} \big)^{\omega(N)}$.   Theorem~\ref{T:main}(ii) then follows immediately from the above proposition.

\begin{remark}
\begin{romanenum}
\item
The constants $C$ in Theorem~\ref{T:main} are not easily computed because, in the proof of Proposition~\ref{P:put together}, we have applied Faltings theorem to several modular curves to control the contribution from the small primes.   Besides this our bounds are effective; for example it will be clear from the proof that the index $[\GL_2(\ZZ/m\ZZ): \rho_{E,m}(\Gal_\QQ)\big]$ is bounded by $( 68 N (1+\log\log N)^{1/2} )^{24\omega(N)}$ for all positive integers $m$ relatively prime to $37\prod_{\ell\leq 17}\ell$.

\item
Serre's theorem is stated more generally for non-CM elliptic curves $E$ over a number field $K$.  We expect Theorem~\ref{T:main}(i) to hold for all such $E/K$ except with a constant $C$ now depending on the field $K$.  The methods used in the proofs of Theorem~\ref{T:main}(ii) and (iii) seem unlikely to extend to a general number field $K$.
\end{romanenum}
\end{remark}

\subsection{Lang-Trotter constants on average}
We now give a quick application of Theorem~\ref{T:main}.  Fix an integer $r$ and let $E$ be an elliptic curve defined over $\QQ$.  For each prime $p$  of good reduction, we obtain an elliptic curve $E_p$ over $\FF_p$ by reduction modulo $p$.   Let $a_p(E)$  be the trace of the Frobenius automorphism of $E_p/\FF_p$; it is the integer for which $|E_p(\FF_p)| = p +1 -a_p(E)$.  We define $\pi_{E,r}(x)$ to be the number of primes $p\leq x$ of good reduction for which $a_p(E)=r$.

For example, if $r=0$, then $\pi_{E,0}(x)$ counts the number of supersingular primes $p\leq x$ of $E$ (except possibly undercounting at $p=2$ and $p=3$).   When $r=1$, the function $\pi_{E,1}(x)$ counts the number of ``anomalous'' primes of $E$ up to $x$ \cite{MR0444670}.   For the CM elliptic curve $E\colon Y^2=X^3-X$ we have $a_p(E)=\pm 2$ if and only if $p=n^2+1$, so $\pi_{E,-2}(x)+\pi_{E,2}(x)$ counts the number of primes $p\leq x$ that are of the form $n^2+1$.   The following conjecture of Lang and Trotter \cite{MR0568299} predicts the asymptotics of $\pi_{E,r}(x)$ as $x\to +\infty$.

\begin{conj}[Lang-Trotter] \label{C:LT}
Let $E$ be an elliptic curve over $\QQ$ and let $r$ be an integer.  Except for the case where $r=0$ and $E$ has complex multiplication, there is an explicit constant $C_{E,r}\geq 0$ such that 
\[
\pi_{E,r}(x) \sim C_{E,r} \frac{\sqrt{x}}{\log x}
\]
as $x\to +\infty$. 
\end{conj}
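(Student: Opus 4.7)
The plan is to combine the Galois-theoretic description of $a_p(E)\bmod m$ (a ``modular'' heuristic) with the Sato--Tate equidistribution law (an ``archimedean'' heuristic), and then try to upgrade the resulting prediction to an honest asymptotic via a large sieve.

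First I would make the predicted constant $C_{E,r}$ completely explicit. Set $G:=\rho_E(\Gal_\QQ)\subseteq\GL_2(\Zhat)$, which by Serre's theorem (and quantitatively by Theorem~\ref{T:main}) is open. For each $m\geq 1$, write $G_m$ for the image of $G$ in $\GL_2(\ZZ/m\ZZ)$. Chebotarev applied to the division field $\QQ(E[m])/\QQ$ yields
\[
\delta_m(r):=\lim_{x\to\infty}\frac{\#\{p\leq x:\ p\nmid mN,\ a_p(E)\equiv r\!\!\pmod m\}}{\pi(x)}=\frac{\#\{g\in G_m:\tr(g)\equiv r\!\!\pmod m\}}{|G_m|}.
\]
Coupling with the Sato--Tate measure $d\mu_{ST}(t)=\tfrac{1}{2\pi}\sqrt{4-t^2}\,dt$ on $[-2,2]$, the Lang--Trotter heuristic says that at size $p\asymp x$ the modular density $m\cdot\delta_m(r)$ combines multiplicatively with the archimedean weight $\tfrac{1}{2\pi\sqrt{p}}\sqrt{4-r^2/p}$. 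Integrating $p\leq x$ gives $C_{E,r}\sqrt{x}/\log x$ with
\[
C_{E,r}=\frac{2}{\pi}\cdot\lim_{m\to\infty}m\,\delta_m(r),
\]
an Euler product of local factors. Theorem~\ref{T:main} guarantees that the limit is well defined and finite, and that $C_{E,r}>0$ whenever no obvious local obstruction is present (so that the case $C_{E,r}=0$ can be read off from $\rho_E$).

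Second, I would attempt to make the heuristic rigorous via a sieve. Because $|a_p(E)|\leq 2\sqrt p$, for any $p>(r/2)^2$ the condition $a_p(E)=r$ is equivalent to the congruence $a_p(E)\equiv r\pmod m$ together with the inclusion $a_p(E)\in[r-m/2,r+m/2]$. Taking $m=m(x)\to\infty$ slowly with $x$, one uses an effective Chebotarev density theorem on $\QQ(E[m])$ to control the first condition and an effective form of Sato--Tate to control the second. A suitable weighting, in the spirit of Selberg's sieve or the Maynard--Tao smooth-weight setup, should in principle collapse to the conjectured main term $C_{E,r}\sqrt{x}/\log x$.

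The main obstacle, and the reason the conjecture remains open, lies in the quality of these two equidistribution inputs. Even under GRH the Chebotarev error for $\Gal(\QQ(E[m])/\QQ)$ is of size $\sqrt x\,\log(mNx)$ per conjugacy class, which forces $m\lesssim x^{1/4}$; but the density $\delta_m(r)$ is of order $1/m$, so one really needs $m\approx\sqrt x$ to isolate a single integer value of $a_p$. The archimedean side fares even worse: current effective versions of Sato--Tate (via the automorphy of symmetric powers of Barnet-Lamb--Geraghty--Harris--Taylor) yield only modest power-savings in the discrepancy. Together these give, at best, $\pi_{E,r}(x)\ll x^{4/5}$ unconditionally and $\pi_{E,r}(x)\ll x^{3/4}(\log x)^{-1/2}$ under GRH, both well short of $\sqrt x/\log x$. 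Bridging the gap appears to demand fundamentally new equidistribution input beyond both GRH and current automorphy methods; absent such input, the plan above delivers the explicit constant and the correct heuristic but not the asymptotic itself.
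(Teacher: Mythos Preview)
The statement you were asked to prove is a \emph{conjecture}, not a theorem, and the paper does not prove it; indeed, immediately after stating it the paper says ``The Lang-Trotter conjecture is open for every pair $(E,r)$ for which $C_{E,r}\neq 0$.'' So there is no proof in the paper to compare against. Your write-up correctly recognizes this: you give the standard Lang--Trotter heuristic (combining Chebotarev mod $m$ with the Sato--Tate archimedean density), you recover the explicit constant $C_{E,r}=\tfrac{2}{\pi}\lim_m m\,\delta_m(r)$ exactly as the paper records it in \S\ref{S:LT}, and you then explain honestly why the sieve/effective-equidistribution approach falls short of the conjectured asymptotic. That is the appropriate response to a request to ``prove'' an open conjecture.

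One small caution: you invoke Theorem~\ref{T:main} to guarantee that the limit defining $C_{E,r}$ exists. Convergence of that limit needs only Serre's open image theorem (as the paper notes just after the definition of $C_{E,r}$), not the quantitative bounds of Theorem~\ref{T:main}; the latter are used in the paper only to control the \emph{size} of $C_{E,r}$ via Lemma~\ref{L:LT inequality} when proving the average result Theorem~\ref{T:LT}.
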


If $C_{E,r}=0$, then we interpret the conjecture as simply saying that $\pi_{E,r}(x)$ is a bounded function of $x$.  The constant $C_{E,r}$, as predicted by Lang and Trotter, is expressed in terms of the image of the representation $\rho_E$.  We will describe it for non-CM curves in \S\ref{S:LT}.    

The Lang-Trotter conjecture is open for every pair $(E,r)$ for which $C_{E,r}\neq 0$.   Furthermore, there are no known examples of $E/\QQ$ and $r\neq 0$ for which $\lim_{x\to \infty} \pi_{E,r}(x) = \infty$ (if $E$ is non-CM, then $E$ has infinitely many supersingular primes by Elkies \cite{MR903384}).\\

One source of theoretical evidence for the Lang-Trotter conjecture are ``on average'' versions of it, that is, take the average of the functions $\pi_{E,r}(x)$ over a family of elliptic curves $E$ and show that it is compatible with what one expects from the Lang-Trotter conjecture.    For real numbers $A,B\geq 2$, let $\calF(A,B)$ be the set of pairs $(a,b)\in\ZZ^2$ with $|a|\leq A$ and $|b|\leq B$ such that $4a^3+27b^2\neq 0$. For each $(a,b)\in \calF(A,B)$, let $E(a,b)$ be the elliptic curve over $\QQ$ defined by the Weierstrass equation $y^2=x^3+ax+b$.   

\begin{thm}[David-Pappalardi \cite{MR1677267}]  \label{T:DP}
Fix $\varepsilon>0$.  Let $A=A(x)$ and $B=B(x)$ be functions of $x\geq 2$ for which $A,B > x^{1+\varepsilon}$.  Then 
\[
\frac{1}{|\calF(A,B)|} \sum_{(a,b)\in \calF(A,B)} \pi_{E(a,b),r}(x) \sim C_r \frac{\sqrt{x}}{\log x} 
\]
as $x\to +\infty$, where
\[
C_r := \frac{2}{\pi} \prod_{\ell | r} \Big(1-\frac{1}{\ell^2}\Big)^{-1} \prod_{\ell\nmid r} \frac{\ell(\ell^2-\ell-1)}{(\ell-1)(\ell^2-1)}.
\]
\end{thm}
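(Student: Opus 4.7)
The plan is to interchange the sums over primes and pairs, evaluate the resulting local count by Deuring's theorem, and then average Hurwitz class numbers over primes by analytic methods.

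First, I expand $\pi_{E(a,b),r}(x)$ as a sum of prime indicators and interchange:
\[
\sum_{(a,b)\in\calF(A,B)} \pi_{E(a,b),r}(x) = \sum_{p\leq x} \#\{(a,b)\in\calF(A,B) : p\nmid\Delta(E(a,b)),\ a_p(E(a,b))=r\}.
\]
For each fixed $p$, the conditions on $(a,b)$ depend only on $(a,b)\bmod p$, so I partition the box $\calF(A,B)$ into residue classes modulo $p$. Each residue class contains $4AB/p^2$ pairs up to an additive error of $O(A/p+B/p+1)$; the hypothesis $A,B>x^{1+\varepsilon}$ ensures the cumulative error over $p\leq x$ is negligible compared to the anticipated main term. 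The inner sum thus reduces to $(4AB/p^2)\cdot N(p,r)$ plus an acceptable error, where
\[
N(p,r):=\#\{(a,b)\in\FF_p^2 : 4a^3+27b^2\neq 0,\ a_p(E(a,b))=r\}.
\]

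Second, I apply Deuring's theorem. The group $\FF_p^\times$ acts on pairs $(a,b)$ by $u\cdot(a,b)=(u^4a,u^6b)$, with orbits of size $(p-1)/|\Aut_{\FF_p}(E(a,b))|$ corresponding to $\FF_p$-isomorphism classes of elliptic curves. Grouping by class and invoking Deuring's identification of the $1/|\Aut|$-weighted count of elliptic curves over $\FF_p$ with fixed trace $r$ as the Hurwitz-Kronecker class number $H(4p-r^2)$ (extended by zero when $r^2\geq 4p$) gives
\[
N(p,r)=(p-1)\,H(4p-r^2)+O(1).
\]
The problem reduces to the asymptotics of $\sum_{p\leq x,\ r^2<4p}H(4p-r^2)/p$.

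Third, I treat this class number sum by analytic means. Hurwitz's decomposition writes $H(D)$ as a weighted sum over conductors $f$ of true class numbers $h(-D/f^2)$, and $h(-D)=(\sqrt{D}/\pi)L(1,\chi_{-D})$ converts the problem into an average of $L$-values $L(1,\chi_{r^2-4p})$. Expanding each $L$-value as a Dirichlet series and swapping orders of summation reduces matters to counting primes in progressions $4p\equiv r^2\pmod{m}$ for varying $m$, which is handled by the prime number theorem in arithmetic progressions. Since $H(4p-r^2)$ has average size $\sqrt{p}$, the outer sum inherits the scale $\sum_{p\leq x}p^{-1/2}\sim 2\sqrt{x}/\log x$. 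The rational-prime local densities at each $\ell$ assemble into the Euler product defining $C_r$: the factor at $\ell\nmid r$ reflects the density in $\GL_2(\ZZ_\ell)$ of matrices of trace $r$ with invertible determinant; the factor $(1-\ell^{-2})^{-1}$ at $\ell\mid r$ absorbs a ramification correction; and the overall $2/\pi$ arises as the Sato-Tate density at trace zero, which is the correct limit since $r/\sqrt{p}\to 0$.

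The main technical obstacle is this third step. Producing a genuine asymptotic (not merely an upper bound) for $\sum_{p\leq x}H(4p-r^2)/p$ with a power-saving error term strong enough to survive the factor of $1/\log x$ requires a careful truncation of the conductor sum in Hurwitz's formula and uniform control on $L(1,\chi)$ avoiding Siegel zeros. The hypothesis $A,B>x^{1+\varepsilon}$ enters precisely so that the equidistribution error from the first step is dominated by the eventual main term; any weaker condition would allow the arithmetic error to overwhelm the Lang-Trotter size $\sqrt{x}/\log x$.
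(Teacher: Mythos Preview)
The paper does not prove this theorem; it is quoted as the main result of David and Pappalardi \cite{MR1677267} and used only as motivation for Theorem~\ref{T:LT}.  There is therefore no ``paper's own proof'' to compare against.

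That said, your sketch is a faithful outline of the argument in \cite{MR1677267}: interchange the sums, use equidistribution of $(a,b)$ in residue classes modulo $p$ (this is where $A,B>x^{1+\varepsilon}$ is used), invoke Deuring's theorem to rewrite the local count as $(p-1)H(4p-r^2)$ up to negligible error, and then establish the asymptotic for $\sum_{p\le x} H(4p-r^2)/p$ via the class number formula and averaging $L(1,\chi_{r^2-4p})$.  The one place where your write-up is too breezy is the third step: in \cite{MR1677267} the actual work lies in making the average of $L(1,\chi)$ rigorous with an explicit error term, which requires truncating the Dirichlet series for $L(1,\chi)$, applying P\'olya--Vinogradov or a large sieve to control the tail uniformly, and then using Siegel--Walfisz for the main term.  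Your paragraph acknowledges this difficulty but does not indicate how it is overcome; if you intend this as a proof rather than a summary, that gap would need to be filled.
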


So averaging the function $\pi_{E,r}(x)$ over the family $\calF(A,B)$, with $A$ and $B$ sufficiently large in terms of $x$, we obtain a quantity with the expected order of magnitude (\cite{MR1677267} also give a version with explicit error terms).   Since conjecturally  $\pi_{E(a,b),r}(x)\sim C_{E(a,b),r} {\sqrt{x}}/{\log x}$, it is thus natural to ask (as David and Pappalardi did in \S2 of \cite{MR1677267}) whether $C_r$ is the average of the constants $C_{E(a,b),r}$.   So that the average is well-defined, we will arbitrarily define $C_{E,0}=0$ when $E$ has complex multiplication.  In \S\ref{S:LT}, we shall prove the following theorem (building off the work of N.~Jones):

\begin{thm}  \label{T:LT}
There is an absolute constant $\delta>0$ such that
\[
\frac{1}{|\calF(A,B)|} \sum_{(a,b) \in \calF(A,B)} C_{E(a,b),r} = C_r  + O\bigg(\frac{\log^\delta(AB)}{\sqrt{\min\{A, B\}}}\bigg)
\]
where the implicit constant depends only on $r$. 
\end{thm}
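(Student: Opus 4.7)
The plan is to isolate the Galois-theoretic dependence of $C_{E,r}$ and then average the resulting correction factor over $\calF(A,B)$, with Theorem~\ref{T:main} furnishing the uniform control on curves with non-generic mod-$\ell$ images. Following the explicit description of $C_{E,r}$ to be given in \S\ref{S:LT}, I would write
\[
C_{E,r} = \frac{2}{\pi}\cdot \frac{m\,\bigl|\{g\in\rho_{E,m}(\Gal_\QQ):\tr(g)\equiv r\pmod{m}\}\bigr|}{|\rho_{E,m}(\Gal_\QQ)|}\cdot\prod_{\ell\nmid m}\frac{\ell\cdot|\{g\in\GL_2(\FF_\ell):\tr(g)\equiv r\}|}{|\GL_2(\FF_\ell)|},
\]
where $m=m_E$ is the level of $\rho_E$ (the smallest positive integer such that $\rho_E(\Gal_\QQ)$ equals the full preimage of $\rho_{E,m}(\Gal_\QQ)$). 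The universal constant $C_r$ has the same form with $\GL_2(\ZZ/m\ZZ)$ in place of $\rho_{E,m}(\Gal_\QQ)$. Writing $C_{E,r}=C_r\cdot F(E,r)$, the correction factor $F$ equals $1$ whenever $\rho_E$ is surjective, and in general satisfies $|F(E,r)|\ll_r [\GL_2(\Zhat):\rho_E(\Gal_\QQ)]^c$ for some absolute constant $c$.

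Next, following the framework of N.~Jones, I would expand $F(E,r)=\prod_\ell F_\ell(E,r)$ into local factors (with $F_\ell=1$ once $\rho_{E,\ell^\infty}$ is surjective onto $\GL_2(\ZZ_\ell)$) and interchange the sum over $\calF(A,B)$ with the product over $\ell$. For each proper subgroup $H\subseteq\GL_2(\ZZ/\ell^n\ZZ)$, a direct box-count in $(a,b)$-space gives
\[
\#\{(a,b)\in\calF(A,B):\rho_{E(a,b),\ell^n}(\Gal_\QQ)\subseteq H\} = \frac{|\calF(A,B)|}{[\GL_2(\ZZ/\ell^n\ZZ):H]} + O(\ell^{2n}).
\]
The main terms, summed against the local correction factors, reassemble into $C_r$; the $O(\ell^{2n})$ boundary contributions, once summed over $H$ and truncated at a suitable cutoff, produce the dominant error of size $O(\min\{A,B\}^{-1/2})$ up to logarithmic factors.

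The main obstacle is controlling the tail of this procedure, where $\ell^n$ is large or where several primes are simultaneously exceptional for the same curve, since \emph{a priori} one has no bound on how large $F(E,r)$ can be. This is precisely where Theorem~\ref{T:main}(i) intervenes: uniformly on $\calF(A,B)$ one has $[\GL_2(\Zhat):\rho_E(\Gal_\QQ)] \leq C(\log AB)^\gamma$, so only primes $\ell\leq C(\log AB)^{\gamma'}$ can render $F_\ell(E,r)\neq 1$ for any curve in the family, and the total correction $F(E,r)$ is bounded by a fixed power of $\log(AB)$. A square-free-type sieve in $\ell$ (of the kind developed by Jones) then compresses the cross-terms where several primes are simultaneously exceptional, and combining this truncation with the count above produces the stated error $O(\log^\delta(AB)/\sqrt{\min\{A,B\}})$ for some $\delta$ depending on $c$ and $\gamma$.
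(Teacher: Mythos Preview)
Your proposal contains a concrete error and also takes a more laborious route than the paper.

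The box-count you assert,
\[
\#\{(a,b)\in\calF(A,B):\rho_{E(a,b),\ell^n}(\Gal_\QQ)\subseteq H\} = \frac{|\calF(A,B)|}{[\GL_2(\ZZ/\ell^n\ZZ):H]} + O(\ell^{2n}),
\]
is false in general: the condition $\rho_{E,\ell^n}(\Gal_\QQ)\subseteq H$ is equivalent to the existence of a rational point on the modular curve $X_H$ lying over $j_E$, and for most proper subgroups $H$ (those with $X_H$ of genus $\geq 2$, or of genus $\leq 1$ with finitely many rational points) the left side is $O(1)$, not a positive proportion of $|\calF(A,B)|$. There is no equidistribution of images among subgroups of a fixed index. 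Relatedly, the factorization $F(E,r)=\prod_\ell F_\ell(E,r)$ with $F_\ell$ depending only on $\rho_{E,\ell^\infty}(\Gal_\QQ)$ fails because of entanglement between primes: already for Serre curves the index-$2$ image is \emph{not} the product of its $\ell$-adic projections, so the correction to $C_{E,r}$ is not a product of purely local corrections.

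The paper's argument is much shorter and avoids both issues. It partitions $\calF(A,B)$ into the Serre curves $\calS(A,B)$ (index exactly $2$) and the complement $\calN(A,B)$. Over $\calS(A,B)$ it simply invokes Jones's existing computation \cite{MR2534114}*{Theorem~10}, which already produces the main term $C_r$ with an acceptable error. Over $\calN(A,B)$ it uses only the crude pointwise inequality $C_{E,r}\leq[\GL_2(\Zhat):\rho_E(\Gal_\QQ)]\cdot C_r$ (Lemma~\ref{L:LT inequality}) together with Theorem~\ref{T:main}(i) and $h(j_{E(a,b)})\ll\log(AB)$ to get $C_{E(a,b),r}\ll_r\log^\gamma(AB)$ uniformly; multiplying by Jones's bound $|\calN(A,B)|/|\calF(A,B)|\ll\log^\beta(\min\{A,B\})/\sqrt{\min\{A,B\}}$ finishes. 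No sieve, no local factorization, and no counting of curves with prescribed mod-$\ell^n$ image is needed; the sole new ingredient beyond Jones's work is the uniform bound on $C_{E,r}$ supplied by Theorem~\ref{T:main}(i), which you did correctly identify.
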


Thus as long as $A$ and $B$ are not that different in magnitude, the average of the constants $C_{E(a,b),r}$ over the pairs $(a,b)\in\calF(A,B)$ will be well-aproximated by $C_r$.  In particular, 
\[
\lim_{A\to + \infty} \frac{1}{|\calF(A,A^\beta)|} \sum_{(a,b) \in \calF(A,A^\beta)} C_{E(a,b),r} = C_r
\]
for any real number $\beta>0$. 

Theorem~\ref{T:LT} is a special case of a theorem of Jones \cite{MR2534114} which had the additional hypothesis that there exists an absolute constant $c$ such that $\rho_{E,\ell}(\Gal_\QQ)=\GL_2(\ZZ/\ell\ZZ)$ for all non-CM elliptic curves $E$ over $\QQ$ and primes $\ell> c$.   This extra assumption is needed by Jones to control the ``error term''; the potential problem being that a few elliptic curves with extremely large constants $C_{E,r}$ might over contribute in the average.     

We will use Theorem~\ref{T:main}(i) to control the constants $C_{E,r}$.  The connection with our result is that $C_{E,r} \leq  [\GL_2(\Zhat): \rho_E(\Gal_\QQ)]\cdot C_r$ for all non-CM elliptic curves $E/\QQ$ and integers $r$.

\subsection*{Acknowledgments}
Thanks to David Brown for his helpful comments.

\section{Group theory} \label{S:group theory}
\subsection{Cartan subgroups} \label{SS:Cartan}

Fix an odd prime $\ell$ and let $\calA$ be a free commutative \'etale $\ZZ_\ell$-algebra of rank $2$.   Up to isomorphism, there are two such algebras; we will say that $\calA$ is \defi{split} or \defi{non-split} if $\calA/\ell \calA$ is isomorphic to $\FF_\ell\times \FF_\ell$ or $\FF_{\ell^2}$, respectively.   The algebra $\calA$ has a unique automorphism $\sigma$ of order 2 which induces the natural involution on $\calA/\ell \calA$.

The algebra $\calA$ acts on itself by (left) multiplication, so a choice of $\ZZ_\ell$-basis for $\calA$ gives an embedding $\iota\colon \calA\hookrightarrow \End_{\ZZ_\ell}(\calA) \thickapprox M_2(\ZZ_\ell)$ of $\ZZ_\ell$-algebras.    Denote the image of $\iota$ by $R$.  For a positive integer $n$, the subgroup $C(\ell^n)=(R/\ell^n R)^\times$ of $\GL_2(\ZZ/\ell^n\ZZ)$ is called a \defi{Cartan subgroup} of $\GL_2(\ZZ/\ell^n\ZZ)$.   Up to conjugation in $\GL_2(\ZZ/\ell^n\ZZ)$, there are two kinds of Cartan subgroups; $C(\ell^n)$ is \defi{split} or \defi{non-split} if $R$ is \defi{split} or \defi{non-split}, respectively.    The Cartan subgroup $C(\ell^n)$ is its own centralizer in $\GL_2(\ZZ/\ell^n\ZZ)$.   We will denote the normalizer of $C(\ell^n)$ in $\GL_2(\ZZ/\ell^n\ZZ)$ by $C^+(\ell^n)$; the index of $C(\ell^n)$ in $C^+(\ell^n)$ is 2 and $\iota(\sigma)$ is a representative of the non-identity coset of $C^+(\ell^n)/C(\ell^n)$.

More concretely, the group $\{ \left(\begin{smallmatrix}a & 0 \\  0 & b\end{smallmatrix}\right) : a,b \in (\ZZ/\ell^n\ZZ)^\times \}$ is a split Cartan subgroup of $\GL_2(\ZZ/\ell^n\ZZ)$ and the matrix $\left(\begin{smallmatrix}0 & 1 \\1 & 0\end{smallmatrix}\right)$ lies in its normalizer.     With a fixed non-square $\varepsilon\in (\ZZ/\ell^n\ZZ)^\times$, the group $\{ \left(\begin{smallmatrix}a & \varepsilon b \\  b & a\end{smallmatrix}\right) : a,b \in \ZZ/\ell^n\ZZ,\, (a,b)\not\equiv (0,0) \pmod{\ell} \}$ is a non-split Cartan subgroup of $\GL_2(\ZZ/\ell^n\ZZ)$ and the matrix $\left(\begin{smallmatrix}1 & 0 \\0 & -1\end{smallmatrix}\right)$ lies in its normalizer.      

\begin{lemma} \label{L:centralizer}
Let $\alpha$ be an element of $\GL_2(\ZZ_\ell)$ for which  $\tr(\alpha)^2-4\det(\alpha)\not\equiv 0 \pmod{\ell}$, and let $R$ be the $\ZZ_\ell$-subalgebra of $M_2(\ZZ_\ell)$ generated by $\alpha$.     Take any integer $n\geq 1$ and let $\bbar\alpha$ be the image of $\alpha$ in $\GL_2(\ZZ/\ell^n\ZZ)$.
\begin{romanenum}
\item The ring $R$ is a free commutative \'etale $\ZZ_\ell$-algebra of rank $2$.  
\item
The centralizer of $\bbar\alpha$ in $\GL_2(\ZZ/\ell^n\ZZ)$ is the Cartan subgroup $C(\ell^n)=(R/\ell^n R)^\times$.
\item
If $g$ is an element of $\GL_2(\ZZ/\ell^n\ZZ)$ for which $g \bbar \alpha g^{-1}$ belongs to $C(\ell^n)$, then $g$ belongs to $C^+(\ell^n)$.
\item
Let $H$ be a cyclic subgroup of $\GL_2(\ZZ/\ell^n\ZZ)$ generated by a matrix of the form $h=I +\ell^iA$ with $1\leq i <n$.   Suppose that $H$ is stable under conjugation by $\bbar\alpha$ and that $A\pmod{\ell}$ is a non-zero element of $R/\ell R$.  Then $H\subseteq C(\ell^n)$.
\item Suppose that the image of $B\in C(\ell)$ in $\PGL_2(\ZZ/\ell\ZZ)$ has order greater than 2, then $\tr(B)^2-4\det(B)\neq0$ and $\tr(B)\neq 0$.
\end{romanenum}
\end{lemma}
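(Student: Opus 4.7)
My plan is to dispatch parts (i)--(iii) and (v) with direct linear algebra and to spend the real effort on (iv).

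The hypothesis on $\alpha$ makes the characteristic polynomial $x^2-\tr(\alpha)x+\det(\alpha)$ separable mod $\ell$, so by Cayley--Hamilton $R=\ZZ_\ell\cdot 1+\ZZ_\ell\cdot \alpha$ is a free $\ZZ_\ell$-module of rank $2$ (the pair $1,\alpha$ is independent, since a scalar matrix has a repeated eigenvalue) and étale because its discriminant is a unit, giving (i). For (ii), $\bar\alpha$ is regular semisimple modulo $\ell$, so its centralizer in $M_2(\FF_\ell)$ has dimension $2$; the $\ZZ/\ell^n$-linear operator $X\mapsto\bar\alpha X-X\bar\alpha$ on $M_2(\ZZ/\ell^n)$ therefore has a free rank-$2$ kernel that contains the rank-$2$ subring $R/\ell^n R$, forcing the two to agree, and taking units gives $C(\ell^n)$. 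For (iii), set $\beta=g\bar\alpha g^{-1}\in C(\ell^n)$; since $\bar\beta$ is conjugate to $\bar\alpha$ it is non-scalar, so $1,\beta$ are $\ZZ/\ell^n$-independent, $(\ZZ/\ell^n)[\beta]$ is free of rank $2$, and by Nakayama equals $R/\ell^n R$. Hence $g$ normalizes $R/\ell^n R$ and therefore its unit group $C(\ell^n)$. For (v), move to a diagonalizing basis (split) or to the identification $R/\ell R\cong \FF_{\ell^2}$ (non-split); ``order $>2$ in $\PGL_2$'' means neither $B$ nor $B^2$ is scalar, i.e.\ $x\neq\pm y$ or $B\neq\pm\bar B$, and the identities $\tr(B)^2-4\det(B)=(x-y)^2=(B-\bar B)^2$ together with $\tr(B)=x+y=B+\bar B$ immediately finish (v).

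For (iv), I induct on $n-i$. The base case $n=i+1$ is automatic: $\ell^i A\in\ell^i M_2(\ZZ/\ell^{i+1})$ depends only on $\bar A\in R/\ell R$, so $h\in R/\ell^{i+1}R$. For the inductive step, let $\pi\colon\GL_2(\ZZ/\ell^n)\to\GL_2(\ZZ/\ell^{n-1})$ be the reduction. The inductive hypothesis gives $\pi(h)\in R/\ell^{n-1}R$, so I lift to $\tilde h\in R/\ell^n R$ and write $h=\tilde h+\ell^{n-1}B$ with $B\in M_2(\FF_\ell)$; the task is to show $\bar B\in R/\ell R$. Expand $\bar\alpha h\bar\alpha^{-1}=h^k$ (with $k$ coming from $H$-stability) modulo $\ell^n$: since $\tilde h\in R$ commutes with $\bar\alpha$ and $\tilde h\equiv I\pmod\ell$, the non-commutative terms in $(\tilde h+\ell^{n-1}B)^k$ collapse and one obtains
\[
\tilde h-\tilde h^k\equiv\ell^{n-1}\bigl(kB-\bar\alpha B\bar\alpha^{-1}\bigr)\pmod{\ell^n}.
\]
A separate mod-$\ell^{i+1}$ analysis of the same relation, using that $\bar A$ is a nonzero matrix in $M_2(\FF_\ell)$, forces $k\equiv 1\pmod\ell$; and since $\pi(h)$ commutes with $\bar\alpha$ in the commutative ring $R/\ell^{n-1}R$, one has $\pi(h)^{k-1}=1$, so the left side equals $\ell^{n-1}r$ for some $r\in R/\ell R$, and reducing mod $\ell$ gives the identity $(\ad(\bar\alpha)-k)(\bar B)=-r$ in $M_2(\FF_\ell)$.

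To extract $\bar B\in R/\ell R$ I decompose $M_2(\FF_\ell)=(R/\ell R)\oplus V$ as $\ad(\bar\alpha)$-modules, where $V$ is the sum of the eigenspaces for the eigenvalues $\lambda/\mu$ and $\mu/\lambda$ (with $\lambda,\mu$ the two distinct mod-$\ell$ eigenvalues of $\bar\alpha$), both different from $1$. Writing $\bar B=B_0+B_1$ accordingly and projecting the identity onto $V$ kills both $-r$ and $B_0$, leaving $(\ad(\bar\alpha)-k)B_1=0$; since $k\equiv 1\pmod\ell$ this reduces to $(\ad(\bar\alpha)-1)B_1=0$, and injectivity of $\ad(\bar\alpha)-1$ on $V$ forces $B_1=0$. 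Hence $\bar B=B_0\in R/\ell R$, closing the induction. The main obstacle is (iv) itself: one must carefully manage the non-commutativity of $M_2(\ZZ/\ell^n)$ while lifting Cartan membership one power of $\ell$ at a time, and in particular the identity $k\equiv 1\pmod\ell$ must be pulled out at precisely the right level of approximation; once those are in place, the rest of the lemma is essentially bookkeeping.
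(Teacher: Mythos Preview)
Your proof is correct. Parts (i) and (v) match the paper's argument essentially verbatim, and your treatments of (ii) and (iii) differ only cosmetically: the paper handles (ii)--(iii) together by observing that $x^2-\tr(\bar\alpha)x+\det(\bar\alpha)$ has exactly the two roots $\bar\alpha$ and $m^{-1}\bar\alpha m$ in $R/\ell^nR$ (for any $m\in C^+(\ell^n)\setminus C(\ell^n)$), whereas you compute the centralizer as the kernel of $\operatorname{ad}(\bar\alpha)$ and then use Nakayama to identify $(\ZZ/\ell^n)[\beta]$ with $R/\ell^nR$. Both routes are standard.

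The genuine divergence is in (iv). The paper does \emph{not} induct: having shown $k\equiv 1\pmod\ell$ exactly as you do, it notes that conjugation by $\bar\alpha$ is then an automorphism of $H$ of $\ell$-power order dividing $\ell^{n-i-1}$, so $\beta:=\bar\alpha^{\ell^{n-i-1}}$ centralizes $H$. Because the image of $\bar\alpha$ in $\GL_2(\FF_\ell)$ has order prime to $\ell$, the element $\beta$ is still regular semisimple mod $\ell$, so by (ii) its centralizer is a Cartan subgroup; that Cartan contains $C(\ell^n)$ (since $\bar\alpha$ commutes with $\beta$) and hence equals it, forcing $H\subseteq C(\ell^n)$ in one stroke. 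Your approach instead lifts membership in $R$ one power of $\ell$ at a time, using the $\operatorname{ad}(\bar\alpha)$-eigenspace decomposition $M_2(\FF_\ell)=(R/\ell R)\oplus V$ to kill the obstruction at each step. The paper's trick is shorter and avoids any explicit eigenspace analysis, while your argument is more hands-on and makes the role of the separation of eigenvalues $\lambda/\mu\ne 1$ completely transparent; it would also adapt more readily to situations where one wants finer control over the successive quotients.
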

\begin{proof}
\noindent (i) We claim that $\{I,\alpha\}$ is a basis of $R$ as a $\ZZ_\ell$-module; it is generated by $\{I, \alpha\}$ by the Cayley-Hamilton theorem, so it suffices to show that they are linearly independent.   Suppose that $aI + b \alpha =0$ with $a,b\in \ZZ_\ell$ not both zero.   By dividing by an appropriate power of $\ell$, we may assume that $a$ or $b$ is non-zero modulo $\ell$.  Reducing modulo $\ell$, we find that $\alpha \pmod{\ell}$ must be a scalar matrix which contradicts our assumption that $\tr(\alpha)^2-4\det(\alpha)\not\equiv 0 \pmod{\ell}$.   That $R$ is an \'etale $\ZZ_\ell$-algebra follows from $\tr(\alpha)^2-4\det(\alpha) \in \ZZ_\ell^\times$. 

\noindent (ii)-(iii)  Fix an element $m\in C^+(\ell^n)-C(\ell^n)$.   Since $\tr(\bbar\alpha)^2-4\det(\bbar\alpha)\in (\ZZ/\ell^n\ZZ)^\times$ and $\ell$ is odd, one finds that there are exactly two roots of the polynomial $x^2 - \tr(\bbar\alpha)x + \det(\bbar\alpha)$ in $R/\ell^nR$; they are $\bbar\alpha$ and $m^{-1}\bbar\alpha m$.   If $g \bbar \alpha g^{-1} = \bbar\alpha$, then $g$ commutes with $C(\ell)=(R/\ell^n R)^\times$ and hence $g\in C(\ell^n)$.  If $g \bbar \alpha g^{-1}=m^{-1}\bbar\alpha m$, then $mg$ commutes with $C(\ell^n)$ and hence $g\in C^+(\ell^n)$.

\noindent (iv)  
The group $H$ is cyclic of order $\ell^{n-i}$.  Since $\bbar\alpha h \bbar\alpha^{-1}$ is also a generator of $H$, there is a unique $m \in  (\ZZ/\ell^{n-i}\ZZ)^\times$ for which $\bbar\alpha h \bbar\alpha^{-1} = h^m$.  We have $\bbar\alpha A \bbar\alpha^{-1} \equiv A \pmod{\ell}$ since by assumption $A \pmod{\ell}$ lies in $R/\ell R$.  Therefore,
\[
I +\ell^i A\equiv \bbar\alpha h \bbar\alpha^{-1} = h^m \equiv I +\ell^i mA \pmod{\ell^{i+1}}
\]
and so $m\equiv 1 \pmod{\ell}$ since $A \pmod{\ell}$ is non-zero.   Conjugation by $\bbar\alpha$ thus gives a group automorphism of $H$ with order dividing $\ell^{n-i-1}$, and hence $\beta:=\bbar \alpha^{\ell^{n-i-1}}$ commutes with $H$.     The order of $\bbar\alpha\pmod{\ell}$ is relatively prime to $\ell$, so $\tr(\beta)^2-4\det(\beta) \not\equiv 0 \pmod{\ell}$.    Using part (ii), the centralizer of $\beta$ in $\GL_2(\ZZ/\ell^n\ZZ)$ is a Cartan subgroup that contains $C(\ell^n)$ (the centralizer of $\bbar\alpha$) thus $C(\ell^n)$ is the centralizer of $\beta$.  Therefore, $H\subseteq C(\ell^n)$. 

\noindent (v)  
That $B$ belongs to $C(\ell)$ implies that it is diagonalizable in $\GL_2(\FFbar_\ell)$.  It is then easy to show that $B^2$ is a scalar matrix (equivalently, the image of $B$ in $\PGL_2(\FF_\ell)$ has order 1 or 2) if and only if $\tr(B)^2-4\det(B)=0$ or $\tr(B)=0$.
\end{proof}

\subsection{Congruence filtration} \label{SS:filtrations}
Fix a prime $\ell$ and a closed subgroup $G$ of $\GL_2(\ZZ_\ell)$.  [We will eventually consider $G=\rho_{E,\ell^\infty}(\Gal_\QQ)$ for a non-CM elliptic curve $E/\QQ$.]  For each positive integer $n$, let $G(\ell^n)$ be the image of $G$ under the reduction modulo $\ell^n$ homomorphism $\GL_2(\ZZ_\ell)\to \GL_2(\ZZ/\ell^n\ZZ)$.  

For each $n\geq 1$, we let $G_n$ be the subgroup consisting of those $A\in G$ for which $A\equiv I \pmod{\ell^n}$.     The groups $G_n$ are normal closed subgroups of $G$ and form a fundamental system of neighbourhoods of $1$ in $G$.  

Let $\gl_2(\FF_\ell)$ be the additive group $M_2(\FF_\ell)$ and let $\sl_2(\FF_\ell)$ be the subgroup of trace 0 matrices; they are Lie algebras over $\FF_\ell$ when equipped with the usual pairing $[A,B]=AB-BA$.  For each $n\geq 1$, we have an injective group homomorphism
\begin{equation*} 
G_{n}/G_{n+1} \hookrightarrow \gl_2(\FF_\ell), \quad  I + \ell^n B  \mapsto B \bmod{\ell}
\end{equation*}
whose image we will denote by $\g_n$.    From the groups $\g_n$ and $G(\ell)$, we can recover the cardinality of each $G(\ell^i)$, though not necessarily the group.  Indeed, for each $i\geq 1$ we have $|G(\ell^{i+1})|=|G(\ell^i)| |\g_i|$, so $|G(\ell^i)|=|G(\ell)| \prod_{n=1}^{i-1}|\g_n|$.   If $\det(G_n) \subseteq 1 + \ell^{n+1} \ZZ_\ell$, then $\g_n \subseteq \sl_2(\FF_\ell)$.  In particular, if $\det(G)=1$, then $\g_n\subseteq \sl_2(\FF_\ell)$ for all $n\geq 1$.

Let $[G,G]$ be the \defi{commutator subgroup} of $G$, that is,  the smallest closed normal subgroup of $G$ whose quotient group is abelian.

\begin{lemma} \label{L:filtration basics}  Fix an integer $n\geq 1$ (assume that $n\geq 2$ if $\ell=2$).
\begin{romanenum}
\item  $\g_n \subseteq \g_{n+1}$.
\item  If $\g_n=\gl_2(\FF_\ell)$, then $G\supseteq I+\ell^n M_2(\ZZ_\ell)$.
\item  If $\det(G)=1$ and $\g_n=\sl_2(\FF_\ell)$, then $G\supseteq \{ A \in \SL_2(\ZZ_\ell) : A\equiv I \pmod{\ell^n} \}$. 
\item  If $\g_n= \g_{2n}$, then $\g_n$ is a Lie subalgebra of $\gl_2(\FF_\ell)$.
\item  Let $e=0$ or $1$ if $\ell$ is odd or even, respectively.   Suppose that $G(\ell^{n+1+e})\supseteq \{A \in \SL_2(\ZZ/\ell^{n+1+e}\ZZ) : A \equiv I \pmod{\ell^n}\}$, then $[G,G]$ contains $\{ A \in \SL_2(\ZZ_\ell) : A\equiv I \pmod{\ell^{2n+e}} \}$.
\end{romanenum}
\end{lemma}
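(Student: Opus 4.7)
For part (i), expand $A^\ell = (I + \ell^n B)^\ell$ by the binomial theorem. Since $\ell \mid \binom{\ell}{k}$ for $1 \le k \le \ell-1$, the parity hypothesis forces every term beyond $I + \ell^{n+1}B$ into $\ell^{n+2}M_2(\ZZ_\ell)$, so $A^\ell \in G_{n+1}$ has the same image in $\g_{n+1}$ that $A$ has in $\g_n$, giving $\g_n \subseteq \g_{n+1}$. Parts (ii) and (iii) follow by a standard Hensel-style approximation: iterating (i) gives $\g_k = \gl_2(\FF_\ell)$ (resp.\ $\sl_2(\FF_\ell)$) for every $k \ge n$, and any target $X$ in the claimed congruence subgroup can be realized as a Cauchy product of elements of $G$ whose limit lies in $G$ by closedness. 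For (iv), the direct computation $[I + \ell^n A, I + \ell^n B] \equiv I + \ell^{2n}(AB - BA) \pmod{\ell^{2n+1}}$ identifies the image in $\g_{2n}$ of such a commutator as the Lie bracket of the $\g_n$-images of $A, B$, and the assumption $\g_n = \g_{2n}$ then makes $\g_n$ closed under the bracket on $\gl_2(\FF_\ell)$.

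Part (v) is the substantive part. Write $\Gamma_m = \{A \in \SL_2(\ZZ_\ell) : A \equiv I \pmod{\ell^m}\}$. Closedness of $[G,G]$ and the approximation argument from (ii)/(iii) reduce the claim to showing: for every $m \ge 2n+e$ and every $\gamma \in \sl_2(\FF_\ell)$, there is $Z \in [G,G]$ with $Z \equiv I + \ell^m \gamma \pmod{\ell^{m+1}}$. The hypothesis gives $\g_n \supseteq \sl_2(\FF_\ell)$, so by (i), $\g_k \supseteq \sl_2(\FF_\ell)$ for every $k \ge n$; in particular, for each $\beta \in \sl_2(\FF_\ell)$ and $k \ge n$ there is $Y \in G$ with $Y \equiv I + \ell^k \tilde\beta \pmod{\ell^{k+1}}$ for some trace-zero lift $\tilde\beta$. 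The engine of the argument is the commutator identity
\[
[X, Y] \equiv I + \ell^{a+b}[\tilde\alpha, \tilde\beta] \pmod{\ell^{a+b+1}}
\]
valid whenever $X \equiv I + \ell^a \tilde\alpha$ and $Y \equiv I + \ell^b \tilde\beta$ lie in $G$.

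For $\ell$ odd ($e = 0$), take $a = n$ and $b = m - n \ge n$; since $[\sl_2(\FF_\ell), \sl_2(\FF_\ell)] = \sl_2(\FF_\ell)$ in odd characteristic, products of such commutators realize every $I + \ell^m \gamma$. For $\ell = 2$ ($e = 1$) the obstruction is that $[\sl_2(\FF_2), \sl_2(\FF_2)] = \FF_2 \cdot I$ is a proper subspace of $\sl_2(\FF_2)$, which is exactly why the hypothesis is assumed at level $2^{n+2}$ and the conclusion only asserts $[G,G] \supseteq \Gamma_{2n+1}$. The workaround uses commuting pairs $\alpha, \beta \in \sl_2(\FF_2)$: since $[\tilde\alpha, \tilde\beta] \equiv 0 \pmod 2$, write $[\tilde\alpha, \tilde\beta] = 2\tilde C$ with $\tilde C \in \sl_2(\ZZ_2)$; a refined commutator computation (valid thanks to $n \ge 2$ in controlling BCH-type errors) then places $[X, Y]$ in $\Gamma_{a+b+1}$ with image $\tilde C \bmod 2$ in $\sl_2(\FF_2)$. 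Varying the integral lifts of $\alpha, \beta$ --- made possible by the mod-$2^{n+2}$ hypothesis --- forces $\tilde C \bmod 2$ to realize a representative of every coset of $\FF_2 \cdot I$ in $\sl_2(\FF_2)$: for example, $\alpha = E$, $\beta = I + E$ with the trace-zero lift $\left(\begin{smallmatrix}1 & 1 \\ 0 & -1\end{smallmatrix}\right)$ of $I + E$ yields $[\tilde\alpha, \tilde\beta] = -2E$, so $\tilde C \equiv E$, and symmetric choices recover $F$ and $E+F$. Additive closure from multiplying commutators then covers all of $\sl_2(\FF_2)$ at level $2n+1$, and combining with varying $a, b \ge n$ handles every higher level $m$.

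The main difficulty is the even-characteristic case in (v): because $\sl_2$ fails to be perfect in characteristic $2$, a single commutator over $G_n$ misses two of the three independent $\sl_2(\FF_2)$-directions in the layer $\Gamma_{2n}/\Gamma_{2n+1}$. Recovering those missing directions is exactly what the extra ``$+e$'' in the hypothesis and conclusion is for --- descending one congruence level deeper in $\Gamma$ and exploiting the finer mod-$2^{n+2}$ data via the lift trick supplies the missing generators.
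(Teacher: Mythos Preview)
Parts (i)--(iv) and the odd-$\ell$ case of (v) are handled correctly and along the same lines as the paper: binomial expansion for (i), successive approximation for (ii)--(iii), the commutator identity for (iv), and perfectness of $\sl_2(\FF_\ell)$ in odd characteristic for (v). (Your claim in (v) that the lift $\tilde\beta$ at level $b>n$ can be taken trace-zero is unjustified, but it is also unnecessary: only $\tilde\beta\bmod\ell$ enters the relevant congruence.)

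The genuine gap is in the $\ell=2$ case of (v). Your commuting-pair trick is correct as far as it goes: with trace-zero lifts $\tilde\alpha,\tilde\beta$ of commuting $\alpha,\beta\in\sl_2(\FF_2)$ and the mod-$2^{n+2}$ hypothesis, one does obtain $[X,Y]\equiv I+2^{2n+1}\tilde C\pmod{2^{2n+2}}$ where $2\tilde C=[\tilde\alpha,\tilde\beta]$, and your examples place $E$, $F$, $E+F$ in $\mathfrak s_{2n+1}$. But these three elements span only the $2$-dimensional subspace $\{0,E,F,E+F\}$ of the $3$-dimensional space $\sl_2(\FF_2)$; the missing direction is $I\bmod 2$. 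Producing ``a representative of every coset of $\FF_2\cdot I$'' is not enough, since the additive span of such representatives is exactly a complement to $\FF_2 I$, not all of $\sl_2(\FF_2)$. The paper closes this hole by a different mechanism: the \emph{non}-commuting pair $(B_1,B_2)=(E,F)$ gives $[B_1,B_2]=\operatorname{diag}(1,-1)$, hence an element of $[G,G]$ congruent to $I+2^{2n}\operatorname{diag}(1,-1)\pmod{2^{2n+2}}$, and then \emph{squaring} this element (using $4n\ge 2n+2$, so $n\ge 2$ is again essential) yields $I+2^{2n+1}\operatorname{diag}(1,-1)\pmod{2^{2n+2}}$, i.e.\ $I\bmod 2\in\mathfrak s_{2n+1}$. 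Alternatively you could stay within your own framework by exhibiting a commuting pair with $\tilde C\equiv I$, for instance $\tilde\alpha=\left(\begin{smallmatrix}0&1\\1&0\end{smallmatrix}\right)$ and $\tilde\beta=\left(\begin{smallmatrix}0&1\\-1&0\end{smallmatrix}\right)$ (both reducing to $E+F$), whose bracket is $-2\operatorname{diag}(1,-1)$; but some such explicit extra step is required.
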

\begin{proof}
\noindent (i) Take any $B\in \g_n$.  There exists an $A\in M_2(\ZZ_\ell)$ such that $A\equiv B \pmod{\ell}$ and $I+\ell^n A \in G$.  Taking the $\ell$-th power of $I+\ell^n A$ we find that
\[
(I+\ell^n A)^\ell \equiv I + \ell^{n+1} A + {\ell \choose 2} \ell^{2n} A^2 \quad\pmod{\ell^{3n}}.
\]
When $\ell$ is odd, we have $(I+\ell^n A)^\ell \equiv I + \ell^{n+1} A \pmod{\ell^{n+2}}$ and hence $B\equiv A \pmod{\ell}$ belongs to $\g_{n+1}$ (when $\ell=2$ we need $n\geq 2$ to guarantee that $(I+\ell^n A)^\ell \equiv I + \ell^{n+1} A \pmod{\ell^{n+2}}$).

\noindent (ii)  We always have an inclusion $\g_i \subseteq \gl_2(\FF_\ell)$, so by part (i) and our assumption on $\g_n$ we deduce that $\g_i = \gl_2(\FF_\ell)$ for all $i\geq n$.  So for all $i\geq n$, we have $|G(\ell^i)| = |G(\ell^n)| \ell^{4(i-n)}$.   Since $G$ is a closed subgroup of $\GL_2(\ZZ_\ell)$, this is equivalent to $G$ containing the group $I+\ell^n M_2(\ZZ_\ell)$.  

\noindent (iii) The proof is similar to (ii), one shows that $|G(\ell^i)| = |G(\ell^n)| \ell^{3(i-n)}$ for all $i\geq n$.

\noindent (iv)  
Take any $B_1,B_2 \in \g_n$.  There are $A_i \in M_2(\ZZ_\ell)$ such that $A_i\equiv B_i \pmod{\ell}$ and $g_i:=I+\ell^n A_i$ belongs to $G$.    The commutator $g_1 g_2 g_1^{-1} g_2^{-1}$ belongs to $[G,G]\subseteq G$ and equals
\begin{align*}
&(I+\ell^n A_1)(I+\ell^n A_2)(I+\ell^n A_1)^{-1}(I+\ell^n A_2)^{-1}\\
=& \big( (I+\ell^n A_2)(I+\ell^n A_1) +\ell^{2n}(A_1 A_2 - A_2A_1) \big) (I+\ell^n A_1)^{-1}(I+\ell^n A_2)^{-1}\\
=& I + \ell^{2n}(A_1 A_2 - A_2A_1) (I+\ell^n A_1)^{-1}(I+\ell^n A_2)^{-1}\\
\equiv & I + \ell^{2n}(A_1 A_2 - A_2A_1) \quad \pmod{\ell^{3n}}.
\end{align*}
Therefore, $[B_1,B_2] = B_1B_2-B_2B_1 \equiv A_1 A_2 - A_2A_1 \pmod{\ell}$ is an element of $\g_{2n}$.   Since $\g_n=\g_{2n}$ by assumption, we deduce that $\g_n$ is closed under the Lie bracket $[\cdot,\cdot]$.  

\noindent (v)
Set $S:=[G,G]$ and let $\{\mathfrak{s}_i\}_{i\geq 1}$ be the filtration of $S$; each space $\mathfrak{s}_n$ is contained in $\sl_2(\FF_\ell)$.   We claim that $\mathfrak{s}_{2n+e}=\sl_2(\FF_\ell)$.  Once this is known, we can deduce the desired result from (iii).

Define integral matrices $B_1 = \left(\begin{smallmatrix}0 & 1 \\0 & 0\end{smallmatrix}\right)$, $B_2 =  \left(\begin{smallmatrix}0 & 0 \\1 & 0\end{smallmatrix}\right)$ and $B_3= \left(\begin{smallmatrix}1 & 0 \\0 & -1\end{smallmatrix}\right)$.  We have $\det(I+\ell^n B_i)\equiv 1 \pmod{\ell^{n+1+e}}$, so by assumption there is a $g_i\in G_n$ such that $g_i\equiv I + \ell^n B_i \pmod{\ell^{n+1+e}}$.  From the commutator calculations above, the matrix $h:=g_i g_j g_i^{-1} g_j^{-1}$ belongs to $[G,G]\cap (I+\ell^{2n}M_2(\ZZ_\ell))$ and
$h\equiv I +\ell^{2n}[B_i,B_j] \pmod{\ell^{2n+1+e}}$.  Observe that $[B_1,B_2]= B_3$, $[B_2,B_3]=2B_2$ and $[B_3,B_1]=2B_1$.  The matrices $2B_1$, $2B_2$ and $B_3$ modulo $\ell$ are thus in $\mathfrak{s}_{2n}$ and they generate $\sl_2(\FF_\ell)$ if $\ell$ is odd.  Therefore, $\mathfrak{s}_{2n}=\sl_2(\FF_\ell)$ for $\ell$ odd.   Now consider $\ell=2$.  The group $[G(\ell^{2n+2}),G(\ell^{2n+2})]$ contains $I+\ell^{2n}2B_1= I+\ell^{2n+1}B_1 \pmod{\ell^{2n+2}}$, $I+\ell^{2n}2B_2= I+\ell^{2n+1}B_2 \pmod{\ell^{2n+2}}$ and $(I+\ell^{2n}B_3)^\ell \equiv I +\ell^{2n+1} B_3 \pmod{\ell^{2n+2}}.$  Therefore, $B_1,B_2$ and $B_3$ modulo $\ell$ belong to $\mathfrak{s}_{2n+1}$ and hence $\mathfrak{s}_{2n+1}=\sl_2(\FF_\ell).$
\end{proof}

\subsection{Group theory for Proposition~\ref{P:dichotomy main}} \label{SS:dichotomy proof}
Let $G$ be a closed subgroup of $\GL_2(\ZZ_\ell)$ where $\ell$ is an odd prime, and we keep the notation from \S\ref{SS:filtrations}.  We now impose the following additional assumptions on $G$, which will hold for the rest of this section:
\begin{itemize}
\item $\det(G) = \ZZ_\ell^\times$,
\item $G(\ell)\subseteq \GL_2(\ZZ/\ell\ZZ)$ is contained in the normalizer of a Cartan subgroup, but is not contained in any Cartan subgroup,
\item the image of $G(\ell)$ in $\PGL_2(\ZZ/\ell\ZZ)$ contains an element with order at least 5. 
\end{itemize}

The goal of this section is to prove the following:
\begin{prop} \label{P:dichotomy}
For $G$ as above, for each $n\geq 1$, one of the following properties hold:
\begin{itemize}
\item $G(\ell^n)$ is contained in the normalizer of a Cartan subgroup,
\item $G\supseteq I+ \ell^{4n} M_2(\ZZ_\ell)$.
\end{itemize}
\end{prop}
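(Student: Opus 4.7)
The plan is to reduce to showing $\g_{4n} = \gl_2(\FF_\ell)$, which by Lemma~\ref{L:filtration basics}(ii) gives $G \supseteq I + \ell^{4n}M_2(\ZZ_\ell)$. First choose $\alpha \in G$ whose image in $G(\ell)$ has $\PGL_2$-order at least $5$; by Lemma~\ref{L:centralizer}(v) this image must lie in $C(\ell)$ (not merely in $C^+(\ell)$), so Lemma~\ref{L:centralizer}(i)--(ii) identify the Cartan at every level as $C(\ell^k)=(R/\ell^k R)^\times$ with $R=\ZZ_\ell[\alpha]$. Moreover any Cartan of $\GL_2(\ZZ/\ell^n\ZZ)$ whose normalizer contains $G(\ell^n)$ must equal $C(\ell^n)$, since it has to contain $\alpha\bmod\ell^n$ by Lemma~\ref{L:centralizer}(v) and hence the unique rank-$2$ \'etale subalgebra $R/\ell^nR$ it generates. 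Introduce the $\operatorname{Ad}(\bar\alpha)$-decomposition $\gl_2(\FF_\ell)=R/\ell R\oplus V$, where $V$ is the sum of the eigenspaces with eigenvalue $\neq 1$; since such eigenvectors are trace zero, $V\subseteq\sl_2(\FF_\ell)$, and each $\g_i$ (being $\operatorname{Ad}(G)$-stable) decomposes as $\g_i=\g_i^R\oplus\g_i^V$.

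Now suppose $G(\ell^n)\not\subseteq C^+(\ell^n)$ and let $m\leq n$ be minimal with $G(\ell^m)\not\subseteq C^+(\ell^m)$; minimality forces $m\geq 2$ and $G(\ell^{m-1})\subseteq C^+(\ell^{m-1})$. For $g\in G$ with $g\not\in C^+(\ell^m)$, write $g\equiv c_g(I+\ell^{m-1}\bar A_g)\pmod{\ell^m}$ with $c_g\in C^+(\ell^m)$, so the $V$-component $\bar A_{g,V}$ is nonzero. The heart of the argument is to deduce $\g_{m-1}^V\neq 0$. When $\bar g\in C(\ell)$, compute $[g,\alpha]\equiv I+\ell^{m-1}\operatorname{Ad}(\bar c_g)(1-\operatorname{Ad}(\bar\alpha))\bar A_g\pmod{\ell^m}$; since $1-\operatorname{Ad}(\bar\alpha)$ annihilates $R/\ell R$ and is invertible on $V$ (its eigenvalues on $V$ being $1-\lambda_i/\lambda_j\neq 0$), the image in $\g_{m-1}$ lies in $V$ and is nonzero. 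When $\bar g\in C^+(\ell)-C(\ell)$, apply the same calculation to $g':=g\alpha g^{-1}\in G$, whose mod-$\ell$ reduction $\sigma(\bar\alpha)$ lies in $C(\ell)$; tracking the conjugation shows that $\bar A_{g',V}$ is obtained from $\bar A_{g,V}$ by a composition of operators each invertible on $V$, so the previous case applied to $g'$ produces a nonzero element of $\g_{m-1}^V$.

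From $\g_{m-1}^V\neq 0$ one upgrades to $\g_{m-1}^V=V$: in the non-split case $V$ is $\operatorname{Ad}(\bar\alpha)$-irreducible over $\FF_\ell$; in the split case $V=V_+\oplus V_-$ has distinct $\operatorname{Ad}(\bar\alpha)$-eigenvalues differing from $\pm 1$, and picking $\bar\tau\in G(\ell)-C(\ell)$, the automorphism $\operatorname{Ad}(\bar\tau)$ swaps $V_+$ and $V_-$, ruling out any one-dimensional invariant subspace. Applying the commutator identity from the proof of Lemma~\ref{L:filtration basics}(iv) to elements of $V\subseteq\g_{m-1}$ yields $[V,V]\subseteq\g_{2(m-1)}$, and a direct computation gives $[V,V]=R/\ell R\cap\sl_2(\FF_\ell)$, so $V+[V,V]=\sl_2(\FF_\ell)$. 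By monotonicity (Lemma~\ref{L:filtration basics}(i)) and $2(m-1)\leq 2n-2<4n$, we get $\g_{4n}\supseteq\sl_2(\FF_\ell)$. Finally, $\det(G)=\ZZ_\ell^\times$ forces $\tr:\g_{4n}\to\FF_\ell$ to be surjective (by a snake-lemma comparison of the $\det$-induced short exact sequences at levels $\ell^{4n}$ and $\ell^{4n+1}$), so $\g_{4n}$ contains $\sl_2(\FF_\ell)$ together with an element of nonzero trace, giving $\g_{4n}=\gl_2(\FF_\ell)$.

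The main obstacle is the Case $B$ analysis in the second paragraph: one cannot simply reduce to Case $A$ by translating $g$ by $\bar\tau$, because an arbitrary $\bar\tau\in G(\ell)-C(\ell)$ need not admit a lift to $G\cap C^+$ at higher levels; conjugating $\alpha$ by $g$ stays inside $G$ and sidesteps this issue while the invertibility of the intermediate operators on $V$ prevents the obstruction from being washed out.
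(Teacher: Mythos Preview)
Your proof is correct and follows a genuinely different route from the paper's. The paper proceeds by cases on $\dim_{\FF_\ell}\g_n\in\{1,2,3,4\}$: in the two low-dimensional cases it argues directly that $G(\ell^n)\subseteq C^+(\ell^n)$, using the module decomposition of Lemma~\ref{L:easy rep} together with Lemma~\ref{L:centralizer}(iv) applied to a cyclic piece of $G(\ell^{2n})$; in the two high-dimensional cases it bootstraps $\dim\g_{2n}$ (and then $\dim\g_{4n}$) up to $4$, invoking Lemma~\ref{L:easy rep}(ii) to rule out the possibility $\g_n=\g_{2n}$ when $\dim\g_n=3$. You instead start from a witness $g\in G$ to the failure $G(\ell^m)\not\subseteq C^+(\ell^m)$ at the \emph{minimal} level $m\le n$, and manufacture a nonzero element of $\g_{m-1}\cap V$ via the commutator $[g,\alpha]$ (or $[g\alpha g^{-1},\alpha]$ when $\bar g\notin C(\ell)$); irreducibility of $V$ under $G(\ell)$ and one further Lie-bracket computation then force $\g_{2(m-1)}\supseteq\sl_2(\FF_\ell)$, hence $\g_{2(m-1)}=\gl_2(\FF_\ell)$ by the trace argument. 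Your approach is more hands-on and in fact yields the sharper exponent $2(m-1)\le 2(n-1)$ in place of $4n$; the paper's case analysis, on the other hand, keeps the bookkeeping lighter by never having to track a specific witness $g$ through two commutator steps, and the Case~B detour through $g'=g\alpha g^{-1}$ is avoided entirely.

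Two small points. First, your appeal to Lemma~\ref{L:centralizer}(v) to place $\bar\alpha$ in $C(\ell)$ is not quite what that lemma says; you need the preliminary observation that every element of $C^+(\ell)\setminus C(\ell)$ has image of order at most $2$ in $\PGL_2(\FF_\ell)$ (its square is fixed by $\sigma$, hence scalar), and only then does Lemma~\ref{L:centralizer}(v) apply. Second, the uniqueness-of-Cartan paragraph, while correct, is not actually needed for the logic: proving ``$G(\ell^n)\not\subseteq C^+(\ell^n)\Rightarrow G\supseteq I+\ell^{4n}M_2(\ZZ_\ell)$'' for your \emph{specific} Cartan $C(\ell^n)$ already yields the stated dichotomy, since $G(\ell^n)\subseteq C^+(\ell^n)$ is a special case of the first bullet.
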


Let $C(\ell)$ be a Cartan subgroup of $\GL_2(\ZZ/\ell\ZZ)$ such that $G(\ell)\subseteq C^+(\ell)$.    Using $[C^+(\ell):C(\ell)]=2$ and our assumption that $G(\ell)$ contains an element of order at least 5, we deduce from Lemma~\ref{L:centralizer}(v) that there exists an $\alpha \in G$ such that $\alpha \pmod{\ell}$ belongs to $C(\ell)$, $\tr(\alpha)^2-4\det(\alpha) \not\equiv 0 \pmod{\ell}$ and $\tr(\alpha)\not\equiv 0 \pmod{\ell}$.

Let $R$ be the $\ZZ_\ell$-subalgebra of $M_2(\ZZ_\ell)$ generated by $\alpha$, and for each $n\geq 1$ define $C(\ell^n)=(R/\ell^n R)^\times$.  By Lemma~\ref{L:centralizer}, $R$ is a free commutative \'etale $\ZZ_\ell$-algebra of rank $2$ and $C(\ell^n)$ is the unique Cartan subgroup of $\GL_2(\ZZ/\ell^n\ZZ)$ containing $\alpha$ (in particular, our two Cartan subgroups $C(\ell)$ are the same). 

The group $G$ acts on each $G_n$ by conjugation, and hence also acts on the quotient group $G_n/G_{n+1}$ with the subgroup $G_1$ acting trivially.   Therefore, $\g_n\subseteq \gl_2(\FF_\ell)$ is stable under conjugation by  $G(\ell)=G/G_1$.  To figure out the possibilities for $\g_n$, we will first decompose $\gl_2(\FF_\ell)$ into irreducible $\FF_\ell[G(\ell)]$-modules (the representation theory is straightforward since $\ell \nmid |G(\ell)|$).

\begin{lemma} \label{L:easy rep}
\begin{romanenum}
\item
As an $\FF_\ell[G(\ell)]$-module, with $G(\ell)$ acting by conjugation, $\gl_2(\FF_\ell)$ equals $V_1 \oplus V_2 \oplus V_3$ for non-isomorphic irreducible $\FF_\ell[G(\ell)]$-modules $V_i$.  They can be ordered so that $V_1=\FF_\ell\cdot I$, $V_2= \{A \in R/\ell R : \tr(A)=0\}$ and $\dim_{\FF_\ell} V_3 =2$.
\item
The space $V_3$ in (i) is not a Lie subalgebra of $\gl_2(\FF_\ell)$.
\item 
The group $\sl_2(\FF_\ell)$ is generated by the set $\{ g B g^{-1} - B : g\in G(\ell),\, B \in \gl_2(\FF_\ell) \}$.
\end{romanenum}
\end{lemma}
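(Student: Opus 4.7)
My plan is to apply Maschke's theorem to the $\FF_\ell[G(\ell)]$-module $\gl_2(\FF_\ell)$ (under the conjugation action); this is valid because $|G(\ell)|$ divides $|C^+(\ell)|$, which in turn divides $2(\ell^2-1)$ and is thus coprime to $\ell$. I set $H := G(\ell) \cap C(\ell)$, an index-$2$ subgroup of $G(\ell)$, and fix a coset representative $c \in G(\ell) \setminus H$ lying in $C^+(\ell) \setminus C(\ell)$. I will analyze the module under $H$ first and then re-incorporate $c$.

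For (i), the trace form $\langle A, B \rangle = \tr(AB)$ on $\gl_2(\FF_\ell)$ is non-degenerate and conjugation-invariant; its restriction to $R/\ell R$ has discriminant $\tr(\alpha)^2 - 4\det(\alpha)$, which is a unit (so the form is non-degenerate on $R/\ell R$), and $C^+(\ell) = N_{\GL_2(\FF_\ell)}(C(\ell))$ normalizes $R$. This produces the $G(\ell)$-stable orthogonal decomposition $\gl_2(\FF_\ell) = (R/\ell R) \oplus V_3$. Since $R/\ell R$ is commutative, $H$ acts trivially on it, and splitting by trace gives the submodules $V_1 = \FF_\ell \cdot I$ and $V_2 = \{A \in R/\ell R : \tr(A) = 0\}$. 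Using the explicit models of \S\ref{SS:Cartan}, one checks that $c$ acts on $R/\ell R$ by the nontrivial $\FF_\ell$-algebra automorphism (swap of diagonal entries in the split case, Frobenius $x \mapsto x^\ell$ in the non-split case), hence by $-1$ on $V_2$; so $V_2$ is the sign character of $G(\ell)/H \cong \ZZ/2\ZZ$.

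To show $V_3$ is $2$-dimensional and irreducible, I extend scalars to $\FFbar_\ell$. Under $H$ (of order coprime to $\ell$), $V_3 \otimes \FFbar_\ell$ splits into two eigenlines $L$ and $L'$ for characters $\chi$ and $\chi^{-1}$ of $H$, and $c$ swaps $L$ with $L'$. Any proper $G(\ell)$-stable $\FF_\ell$-subspace of $V_3$ would extend to a proper $G(\ell)$-stable $\FFbar_\ell$-subspace of $V_3 \otimes \FFbar_\ell$; the only $H$-stable lines available when $\chi^2|_H \neq 1$ are $L$ and $L'$ (swapped by $c$), so such a subspace can exist only if $\chi^2|_H = 1$. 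But $\chi|_H$ factors through the image of $H$ in $\PGL_2(\FF_\ell)$, and our element of $\PGL$-order $\geq 5$ must lie in this image---because elements of $C^+(\ell) \setminus C(\ell)$ have $\PGL$-order at most $2$---which rules out $\chi^2 = 1$. The non-isomorphism of $V_1, V_2, V_3$ then follows from dimension and from the opposite $c$-actions on $V_1$ and $V_2$.

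For (ii), the identity $[\left(\begin{smallmatrix} 0 & 1 \\ 0 & 0 \end{smallmatrix}\right), \left(\begin{smallmatrix} 0 & 0 \\ 1 & 0 \end{smallmatrix}\right)] = \left(\begin{smallmatrix} 1 & 0 \\ 0 & -1 \end{smallmatrix}\right) \in V_2$ handles the split case, and an analogous bracket computation in the non-split basis handles the other. For (iii), let $W \subseteq \gl_2(\FF_\ell)$ be the $\FF_\ell$-span of the given generating set; every generator has trace $0$, so $W \subseteq \sl_2(\FF_\ell) = V_2 \oplus V_3$, and $W$ is $G(\ell)$-stable because $h(gBg^{-1}-B)h^{-1} = \bigl((hg)B(hg)^{-1} - B\bigr) - \bigl(hBh^{-1} - B\bigr)$. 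By (i), $W$ is one of $0, V_2, V_3, \sl_2(\FF_\ell)$, and the first three options are each excluded: $W = 0$ would make $G(\ell)$ central, contradicting $G(\ell) \not\subseteq C(\ell)$; $W = V_2$ would force trivial $G(\ell)$-action on the nontrivial summand $V_3$ of $\gl_2(\FF_\ell)/V_2$; $W = V_3$ would force trivial action on $V_2$, contradicting that $c$ acts by $-1$. The main obstacle I anticipate is handling the split and non-split cases uniformly in the irreducibility argument for $V_3$, but in both cases the crux is the same: $c$ interchanges the two $\FFbar_\ell$-eigenlines, and the $\PGL$-order-$\geq 5$ hypothesis prevents the corresponding $H$-characters from coinciding.
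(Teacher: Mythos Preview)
Your proof is correct, and parts (ii) and (iii) follow essentially the same route as the paper (explicit bracket computations in the split and non-split models for (ii); for (iii), checking that $W$ is a $G(\ell)$-submodule of $\sl_2(\FF_\ell)$ and eliminating the proper sub-sums via nontriviality of the action on $V_2$ and $V_3$). The genuine difference is in how you establish the irreducibility of $V_3$ in (i). The paper argues that the conjugation action of $G(\ell)/\bigl(G(\ell)\cap\FF_\ell^\times\bigr)$ on $\gl_2(\FF_\ell)$ is faithful, so if $V_3$ split as two lines then this quotient would be abelian; it then produces a non-commuting pair $\bbar\alpha$, $m\in G(\ell)\setminus C(\ell)$ using the hypothesis $\tr(\alpha)\not\equiv 0\pmod\ell$ (this is where that condition on $\alpha$ enters). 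You instead obtain $V_3$ as the trace-form orthogonal complement of $R/\ell R$, pass to $\FFbar_\ell$, identify the $H$-eigenlines with characters $\chi,\chi^{-1}$ interchanged by $c$, and invoke the $\PGL$-order $\geq 5$ hypothesis to rule out $\chi^2|_H=1$. Both work: the paper's argument is slightly more elementary (no scalar extension) and pinpoints the role of $\tr(\alpha)\neq 0$, while yours gives a cleaner conceptual picture of the decomposition and makes direct use of the large-$\PGL$-order assumption rather than the choice of $\alpha$.
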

\begin{proof}
(i)  It is easy to check that the $V_1$ and $V_2$ as given in the last line are stable under conjugation by $C^+(\ell)$, and hence also by $G(\ell)$, and thus $\gl_2(\FF_\ell) = V_1 \oplus V_2 \oplus V_3$ for some 2-dimensional representation $V_3$.  
The $G(\ell)$-action on $V_1$ is trivial while the action on $V_2$ is non-trivial (since each element of $C^+(\ell)-C(\ell)$ acts as $-I$ on $V_2$, and by assumption we have $G(\ell)\subseteq C^+(\ell)$ and $G(\ell)\not\subseteq C(\ell)$).   

It remains to show that $V_3$ is an irreducible $\FF_\ell[G(\ell)]$-module.   Conjugation gives a faithful action of  $G(\ell)/ G(\ell)\cap \FF_\ell^\times$ on $\gl_2(\FF_\ell)$.   Since $V_1$ and $V_2$ are one dimensional, to prove that $V_3$ is an irreducible $\FF_\ell[G(\ell)]$-module, it suffices to show that the group $G(\ell)/ G(\ell)\cap \FF_\ell^\times$ is non-abelian.   

Let $\bbar\alpha$ be the image of $\alpha$ in $G(\ell)$ and let $m$ be an element in $G(\ell)-C(\ell)$. If the cosets of $m$ and $\bbar\alpha$ in $G(\ell)/ G(\ell)\cap \FF_\ell^\times$ commuted, then $m \bbar\alpha m^{-1} \bbar\alpha^{-1} = \zeta$ for some $\zeta \in \FF_\ell^\times$.   We have $\zeta\neq 1$, since $m\not\in C(\ell)$ and $C(\ell)$ is the centralizer of $\bbar\alpha$ by Lemma~\ref{L:centralizer}(ii).   So $\bbar\alpha$ and $m\bbar\alpha m^{-1}=\zeta \bbar\alpha$ are the two distinct roots of $x^2- \tr(\bbar\alpha) x+ \det(\bbar\alpha)$ in $R/\ell R$.  Therefore, $x^2- \tr(\bbar\alpha)x + \det(\bbar\alpha)=(x-\bbar\alpha)(x-\zeta \bbar\alpha)$ and hence $\tr(\alpha)=(1+\zeta) \bbar\alpha$.    Since $\tr(\bbar\alpha)\in \FF_\ell$ is non-zero by our choice of $\alpha$, we deduce that $\bbar\alpha$ is a scalar matrix.   However, this contradicts $\tr(\bbar\alpha)^2-4\det(\bbar\alpha) \neq 0$.  So as desired, the group $G(\ell)/ G(\ell)\cap \FF_\ell^\times$ is non-abelian.

\noindent (ii)
First suppose that $R$ is split.  After conjugating $R/\ell R$ by an element of $\GL_2(\ZZ/\ell\ZZ)$, we may assume that $R=\{\left(\begin{smallmatrix}a & 0 \\0 & b\end{smallmatrix}\right) \}$.  We then have $V_3= \FF_\ell \left(\begin{smallmatrix}0 & 1 \\0 & 0\end{smallmatrix}\right) \oplus \FF_\ell \left(\begin{smallmatrix}0 & 0 \\1 & 0\end{smallmatrix}\right)$, but $\big[\left(\begin{smallmatrix}0 & 1 \\0 & 0\end{smallmatrix}\right) , \left(\begin{smallmatrix}0 & 0 \\1 & 0\end{smallmatrix}\right)\big] = \left(\begin{smallmatrix}1 & 0 \\0 & -1\end{smallmatrix}\right)$ does not belong to $V_3$.  Now suppose that $R$ is non-split.  After conjugating $R/\ell R$ by an element of $\GL_2(\ZZ/\ell\ZZ)$, we may assume that $R=\{\left(\begin{smallmatrix}a & \varepsilon b \\b & a\end{smallmatrix}\right) \}$ where $\varepsilon \in (\ZZ/\ell\ZZ)^\times$ is a fixed non-square.  Then $V_3= \FF_\ell \left(\begin{smallmatrix}1 & 0 \\0 & -1\end{smallmatrix}\right) \oplus \FF_\ell \left(\begin{smallmatrix}0 & \varepsilon \\-1 & 0\end{smallmatrix}\right)$, but $\big[ \left(\begin{smallmatrix}1 & 0 \\0 & -1\end{smallmatrix}\right) , \left(\begin{smallmatrix}0 & \varepsilon \\-1 & 0\end{smallmatrix}\right)\big] = \left(\begin{smallmatrix}0 & \varepsilon \\1 & 0\end{smallmatrix}\right)$ does not belong to $V_3$.

\noindent (iii)
Let $W$ be the subspace of $\sl_2(\FF_\ell)$ generated by the set $\{ g B g^{-1} - B : g\in G(\ell),\, B \in \gl_2(\FF_\ell) \}$; each element of the set does indeed have trace 0.   The space $W$ is stable under conjugation by $G(\ell)$, so by (i) we find that $W$ is either $0$, $V_2$, $V_3$ or $V_2\oplus V_3$.  It thus suffices to show that $W$ contain non-zero elements from $V_2$ and $V_3$.   Fix $i\in\{2,3\}$.  Since $V_i$ has a non-trivial $G(\ell)$-action, there are $v\in V_i$ and $g\in G(\ell)$ such that $gvg^{-1}\neq v$.  Therefore, $gvg^{-1} -v $ is a non-zero element of $W$ that belongs to $V_i$.
\end{proof}

Define the closed subgroup $S=G\cap \SL_2(\ZZ_\ell)$ of $G$.   For each $n\geq 1$, we let $S_n=G_n\cap \SL_2(\ZZ_\ell)$.  We denote the image of the injective homomorphism 
\begin{equation} \label{E: sl2 basic}
S_{n}/S_{n+1} \hookrightarrow \gl_2(\FF_\ell), \quad  1 + \ell^n B  \mapsto B \bmod{\ell}
\end{equation}
by $\mathfrak{s}_n$, it is a subgroup of $\sl_2(\FF_\ell)$.  

\begin{lemma} \label{L:trace}
With notation as above, $\tr(\g_n)=\FF_\ell$ and $\mathfrak{s}_n = \g_n \cap \sl_2(\FF_\ell)$ for all $n\geq 1$.
\end{lemma}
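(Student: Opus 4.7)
The easy inclusion $\mathfrak{s}_n \subseteq \g_n \cap \sl_2(\FF_\ell)$ follows from the expansion $\det(I + \ell^n B) = 1 + \ell^n \tr(B) + \ell^{2n}\det(B)$, which forces $\tr(B) \equiv 0 \pmod{\ell}$ whenever $I + \ell^n B \in S_n$.

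To prove $\tr(\g_n) = \FF_\ell$, I will produce a continuous surjective additive character $\psi_2 \colon G \to \ell \ZZ_\ell$ by composing $\det$ with the projection $\ZZ_\ell^\times \to 1 + \ell\ZZ_\ell$ and the $\ell$-adic logarithm; surjectivity comes from $\det(G) = \ZZ_\ell^\times$. A direct expansion yields $\psi_2(I + \ell^n B) \equiv \ell^n \tr(B) \pmod{\ell^{n+1}}$ on $G_n$, so $\tr(\g_n) = \FF_\ell$ is equivalent to $\psi_2 \bmod \ell^{n+1}$ not factoring through $G(\ell^n) = G/G_n$. Assuming for contradiction that $\tr(\g_n) = 0$, the character $\psi_2 \bmod \ell^{n+1}$ factors through to give a surjection $\bar\psi_2 \colon G(\ell^n) \to \ZZ/\ell^n\ZZ$; the plan is then to bound the $\ell$-part of $G(\ell^n)^{\mathrm{ab}}$ above by $\ell^{n-1} < \ell^n$, producing a contradiction.

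The bound is obtained as follows. Since $G(\ell) \subseteq C^+(\ell)$ and $|C^+(\ell)| \in \{2(\ell-1)^2,\, 2(\ell^2-1)\}$ is coprime to $\ell$, the normal subgroup $N := \ker(G(\ell^n) \to G(\ell))$ is a Sylow $\ell$-subgroup of $G(\ell^n)$; by Schur--Zassenhaus $G(\ell^n) \cong N \rtimes H$ with $H = G(\ell)$, so the $\ell$-part of $G(\ell^n)^{\mathrm{ab}}$ equals the $H$-coinvariants $(N^{\mathrm{ab}})_H$. The filtration $N_i := G_i/G_n$ has graded pieces $\g_i$, and since $[N_i, N_j] \subseteq N_{i+j}$ the induced filtration on $N^{\mathrm{ab}}$ has graded pieces that are quotients of the $\g_i$. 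By Lemma~\ref{L:easy rep} each $\g_i$ is a sum of some of the irreducible summands $V_1, V_2, V_3$ of $\gl_2(\FF_\ell)$; since $V_2$ and $V_3$ are irreducible with nontrivial $H$-action, $(V_2)_H = (V_3)_H = 0$, while $(V_1)_H = V_1 = \FF_\ell \cdot I$. Consequently $|(\g_i)_H| \leq \ell$, and right-exactness of $(\cdot)_H$ propagated along the filtration gives
\[
|(N^{\mathrm{ab}})_H| \leq \prod_{i=1}^{n-1} |(\g_i)_H| \leq \ell^{n-1},
\]
the desired contradiction.

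For the remaining inclusion $\g_n \cap \sl_2(\FF_\ell) \subseteq \mathfrak{s}_n$, lift $B$ in the intersection to $I + \ell^n B' \in G_n$ with $\tr(B') \in \ell \ZZ_\ell$, so that $\det(I + \ell^n B') \in 1 + \ell^{n+1}\ZZ_\ell$. Applying the trace surjectivity at level $n+1$ (already proved by the argument above) gives $\det(G_{n+1}) = 1 + \ell^{n+1}\ZZ_\ell$; choose $h \in G_{n+1}$ with $\det(h) = \det(I + \ell^n B')^{-1}$ and observe that $(I + \ell^n B')\,h \in S_n$ represents the same class as $I + \ell^n B'$ in $G_n/G_{n+1}$, placing $B$ in $\mathfrak{s}_n$. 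The main obstacle will be the coinvariants bound $|(N^{\mathrm{ab}})_H| \leq \ell^{n-1}$: one must carefully transport the filtration on the non-abelian group $N$ through the abelianization (ensuring the graded pieces of $N^{\mathrm{ab}}$ really are quotients of the $\g_i$) and invoke right-exactness of coinvariants to propagate the per-piece bounds $|(\g_i)_H| \leq \ell$ to the whole of $(N^{\mathrm{ab}})_H$.
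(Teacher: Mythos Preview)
Your argument is correct, but it takes a substantially more elaborate route than the paper for the claim $\tr(\g_n)=\FF_\ell$. The paper proceeds by a direct two-line construction: since $\det(G)=\ZZ_\ell^\times$, choose $g\in G$ with $\det(g)\in (1+\ell\ZZ_\ell)\setminus(1+\ell^2\ZZ_\ell)$; because $\ell\nmid |G(\ell)|$, replacing $g$ by $g^{|G(\ell)|}$ forces $g\equiv I\pmod{\ell}$ while preserving $\det(g)\not\equiv 1\pmod{\ell^2}$, and then $g^{\ell^{n-1}}\in G_n$ has determinant topologically generating $1+\ell^n\ZZ_\ell$. This establishes $\det(G_n)=1+\ell^n\ZZ_\ell$ for every $n$ simultaneously, from which both $\tr(\g_n)=\FF_\ell$ and the surjectivity $\det(G_{n+1})=1+\ell^{n+1}\ZZ_\ell$ needed for the second half drop out at once.

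By contrast, you build an $\ell$-adic character via the logarithm, argue by contradiction, split $G(\ell^n)=N\rtimes H$ via Schur--Zassenhaus, identify the $\ell$-part of $G(\ell^n)^{\mathrm{ab}}$ with $(N^{\mathrm{ab}})_H$, and bound the latter by pushing the filtration through abelianization and invoking right-exactness of coinvariants together with Lemma~\ref{L:easy rep}. All of this is sound (and your worry about transporting the filtration is easily handled: the images $\bar N_i$ in $N^{\mathrm{ab}}$ give an $H$-stable filtration whose successive quotients are visibly $H$-equivariant quotients of $\g_i$). In fact your bound is loose: under the hypothesis $\tr(\g_n)=0$ one has $\g_i\subseteq\g_n\subseteq\sl_2(\FF_\ell)=V_2\oplus V_3$ for $i\le n$, so each $(\g_i)_H=0$ and $(N^{\mathrm{ab}})_H$ is trivial, not merely of order $\le\ell^{n-1}$. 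Your approach trades a short constructive step for a structural argument; it works, and the coinvariant machinery might generalize to situations where no convenient element is at hand, but here the paper's method is decidedly simpler. For the inclusion $\g_n\cap\sl_2(\FF_\ell)\subseteq\mathfrak{s}_n$ the two proofs are essentially identical.
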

\begin{proof}
First of all, we claim that $\det(G_n)=1+\ell^n\ZZ_\ell$ for all $n\geq 1$.  It suffices to show that there exists a $g\in G_1$ with $\det(g)\not\equiv 1 \pmod{\ell^2}$, since then $g^{\ell^{n-1}}\in G_n$ and one can then show that $\det(g^{\ell^{n-1}})$  generates $1+\ell^n\ZZ_\ell$ as a topological group.  Since $\det(G)=\ZZ_\ell^\times$, there exists a $g \in G$ such that $\det(g) \equiv 1 \pmod{\ell}$ and $\det(g) \not\equiv 1 \pmod{\ell^2}$.  Since $\ell \nmid |G(\ell)|$, replacing $g$ by $g^{|G(\ell)|}$, we have $g\equiv I \pmod{\ell}$ and $\det(g) \not\equiv 1 \pmod{\ell^2}$ as desired.   

For an element $g\in G_n$ of the form $g = I +\ell^n A$, the condition that $\det(g)\not\equiv 1 \pmod{\ell^{n+1}}$ is equivalent to $\tr(A)\not\equiv 0 \pmod{\ell}$.   So $\det(G_n)=1+\ell^n\ZZ_\ell$ implies that $\tr(\g_n)=\FF_\ell$.

We certainly have $\mathfrak{s}_n \subseteq \g_n \cap \sl_2(\FF_\ell)$, so we need only prove the other inclusion.   Take any $B \in \g_n$ with trace $0$, and pick an $A$ such that $I+\ell^n A \in G$ with $A\equiv B \pmod{\ell}$.  We have $\det(I+\ell^nA)\equiv 1 \pmod{\ell^{n+1}}$ since $\tr(A)\equiv 0 \pmod{\ell}$, so there exists an element $g\in G_{n+1}$ such that $\det(g)=\det(I+\ell^nA)$.    The matrix $s:=g^{-1}(I+\ell^nA)$ has determinant 1, and hence belongs to $S$.  It satisfies $s \equiv I + \ell^n A \pmod{\ell^{n+1}}$, so $B\equiv A \pmod{\ell}$ belongs to $\mathfrak{s}_n$.
\end{proof}

\begin{proof}[Proof of Proposition~\ref{P:dichotomy}] {\color{white}.}

\noindent \textbf{Case 1:} $\dim_{\FF_\ell} \g_n = 4$.

We have $\g_n=\gl_2(\FF_\ell)$, so $G$ contains the group $1+\ell^n M_2(\ZZ_\ell)$ by Lemma~\ref{L:filtration basics}(ii).

\noindent \textbf{Case 2:} $\dim_{\FF_\ell} \g_n = 3$.

If $\dim_{\FF_\ell} \g_{2n} = 4$, then Case 1 shows that $G$ contains the group $1+\ell^{2n} M_2(\ZZ_\ell)$.   So we may assume that $\dim_{\FF_\ell} \g_{2n} =3$ and hence $\g_n=\g_{2n}$.  Since $\g_n$ has dimension 3 over $\FF_\ell$, is stable under conjugation by $G(\ell)$ and satisfies $\tr(\g_n)=\FF_\ell$, we deduce from Lemma~\ref{L:easy rep}(i) that $\g_n = \FF_\ell \left(\begin{smallmatrix}1 & 0 \\0 & 1\end{smallmatrix}\right) \oplus V_3$.   We thus have $\mathfrak{s}_n = V_3$ which is \emph{not} a Lie subalgebra of $\gl_2(\FF_\ell)$ by Lemma~\ref{L:easy rep}(ii).    However, $\g_n=\g_{2n}$ and Lemma~\ref{L:filtration basics}(iv) implies that $\g_n$, and hence also $\mathfrak{s}_n = \g_n \cap \gl_2(\FF_\ell)$, is a Lie subalgebra of $\gl_2(\FF_\ell)$.  This is a contradiction, so we cannot have $\dim_{\FF_\ell} \g_{2n} =3$.
 
\noindent \textbf{Case 3:} $\dim_{\FF_\ell} \g_n = 2$.

If $\dim_{\FF_\ell} \g_{2n}$ equals 3 or 4, then Cases 1 and 2 above show that $G$ contains the group $1+\ell^{4n} M_2(\ZZ_\ell)$.   So assume that $\dim_{\FF_\ell} \g_{2n} =2$.  

Let $S(\ell^{2n})$ be the image of $S$ under the reduction modulo $\ell^{2n}$ map $\SL_2(\ZZ_\ell)\to\SL_2(\ZZ/\ell^{2n}\ZZ)$.  Let $H$ be the subgroup of $S(\ell^{2n})$ consisting of those matrices congruence to $I$ modulo $\ell^n$.     Since $\dim_{\FF_\ell} \g_n = \dim_{\FF_\ell} \g_{2n} = 2$, we deduce from Lemma~\ref{L:trace} and Lemma~\ref{L:filtration basics}(i) that $\dim_{\FF_\ell} \mathfrak{s}_i = 1$ for $n\leq i \leq 2n$.  There exists an element $h \in S(\ell^{2n})$ of the form $I+\ell^nA$ with $A \bmod{\ell}$ non-zero.  By cardinality considerations, the group $H$ is generated by $h$.   The group $H$ is normalized by $G(\ell^{2n})$.

Using $\mathfrak{s}_n\subseteq\sl_2(\FF_\ell)$, $\dim_{\FF_\ell} \mathfrak{s}_n = 1$, and Lemma~\ref{L:easy rep}(i), we find that there is only one possibility for $\mathfrak{s}_n$, i.e., $\mathfrak{s}_n= \{B\in R/\ell R : \tr(B)=0\}$.  Since $A\pmod{\ell}$ belongs to $R/\ell R$, we have $H\subseteq C(\ell^{2n})$ by Lemma~\ref{L:centralizer}(iv).  We then have
\[
H= \big\{ a \in C(\ell^{2n}) : \det(a)\equiv 1 \pmod{\ell^{2n}} \text{ and } a\equiv I \pmod{\ell^n}\big\};
\]
the inclusion ``$\subseteq$'' is now clear and both groups have cardinality $\ell^n$.   The group $G(\ell^{2n})$ normalizes $H$, hence it also normalizes $H':=\{ a \in C(\ell^{2n}) :  a\equiv I \pmod{\ell^n}\}=I+\ell^n \,R/\ell^{2n} R$ (the group $H'$ is obtained from $H$ by including scalar matrices that are congruent to the identity modulo $\ell^n$).   

Now take any $g\in G$.   For any $a\in R$, we have just shown that there exists an element $b\in R$ such that $g (I+\ell^n a) g^{-1} \equiv I + \ell^n b \pmod{\ell^{2n}}$, and hence $g a g^{-1}\equiv b \pmod{\ell^n}$.   Therefore, $g \pmod{\ell^n}$ normalizes $R/\ell^n R$ and thus belongs to $C^+(\ell^n)$.  We conclude that $G(\ell^n)\subseteq C^+(\ell^n)$.

\noindent \textbf{Case 4:} $\dim_{\FF_\ell} \g_n = 1$.

 By Lemma~\ref{L:trace} and  Lemma~\ref{L:easy rep}, we have $\g_1=\cdots = \g_n = \FF_\ell \left(\begin{smallmatrix}1 & 0 \\ 0 & 1 \end{smallmatrix}\right)$.  Let $H$ be the subgroup of $G(\ell^n)$ consisting of those matrices that are congruent to $I$ modulo $\ell$.   There exists an element $h \in H$ of the form $I+\ell A$ for which $A\equiv I \pmod{\ell}$.  By cardinality considerations, the group $H$ is generated by $h$.   The group $H$ is stable under conjugation by $G(\ell^n)$.  Therefore by Lemma~\ref{L:centralizer}(iv), we have $H\subseteq C(\ell^n)$.
 
Let $W$ be the group of $g\in G(\ell^n)$ for which $g \pmod{\ell} \in C(\ell)$, it is an index 2 subgroup of $G(\ell^n)$.   Take any $g\in W$.   Since $\alpha \pmod{\ell}$ and $g \pmod{\ell}$ commute, the commutator $g\alpha g^{-1} \alpha^{-1} \pmod{\ell^n}$ belongs to $H\subseteq C(\ell^n)$ and hence  $g\alpha g^{-1} \pmod{\ell^n}$ is an element of $C(\ell^n)$.  By Lemma~\ref{L:centralizer}(iii), this implies that $g$ is an element of $C^+(\ell^n)$.   Since $g$ was arbitrary, we have $W\subseteq C^+(\ell^n)$ and by considering its image modulo $\ell$ we must have $W\subseteq C(\ell^n)$.  Finally, the group $W\subseteq C(\ell^n)$ contains $\alpha$ and is normalized by $G(\ell^n)$, thus $G(\ell^n)\subseteq C^+(\ell^n)$ by Lemma~\ref{L:centralizer}(iii).
\end{proof}

\subsection{Proof of Proposition~\ref{P:dichotomy main}}

We will apply Proposition~\ref{P:dichotomy} with the group $G:=\rho_{E,\ell^\infty}(\Gal_\QQ)$ contained in $\GL_2(\ZZ_\ell)$.  The representation $\det\circ \rho_{E,\ell^\infty}\colon \Gal_\QQ \to \ZZ_\ell^\times$ is the $\ell$-adic cyclotomic character, and hence is surjective.  Therefore, $\det(G) = \ZZ_\ell^\times$.      If $G(\ell)=\rho_{E,\ell}(\Gal_\QQ)$ equals $\GL_2(\ZZ/\ell\ZZ)$, then $G=\GL_2(\ZZ_\ell)$ by \cite{MR644559}*{Lemme~15} and the result is immediate for all $n$.   So assume that $G(\ell)\neq \GL_2(\ZZ/\ell \ZZ)$, and hence by our restriction to $\ell>17$ and $\ell\neq 37$, the group $G(\ell)$ lies in the normalizer of some Cartan subgroup of $\GL_2(\ZZ/\ell\ZZ)$ but is not contained in any Cartan subgroup \cite{MR644559}*{Lemme~16--18}.   The image of $G(\ell)$ in $\PGL_2(\ZZ/\ell\ZZ)$ contains an element of order greater than $5$ by \cite{MR644559}*{Lemme~18'} and our assumption $\ell > 17$.

We have verified that our $G$ satisfies the assumptions of Proposition~\ref{P:dichotomy}, so for each $n\geq 1$ the group $G(\ell^n)=\rho_{E,\ell^n}(\Gal_\QQ)$ is contained in the normalizer of a non-split Cartan subgroup \emph{or} $G$ contains $I+\ell^{4n}M_2(\ZZ_\ell)$.   This completes the proof of Proposition~\ref{P:dichotomy main}.\\

To obtain better bounds, we will use the following lemma with Proposition~\ref{P:dichotomy main}.

\begin{lemma} \label{L:dichotomy supplement}
Let $E$ be a non-CM elliptic curve over $\QQ$ and let $\ell$ be a prime greater than 17 and not equal to 37.   If $\rho_{E,\ell^\infty}(\Gal_\QQ) \supseteq I +\ell^n M_2(\ZZ_\ell)$, then $\rho_{E,\ell^\infty}(\Gal(\Qbar/\QQ^\cyc)) \supseteq \{A \in \SL_2(\ZZ_\ell): A \equiv I \pmod{\ell^n}\}$
where $\QQ^\cyc$ is the cyclotomic extension of $\QQ$.
\end{lemma}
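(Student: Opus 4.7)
The plan is to reformulate the conclusion as a statement about the commutator subgroup of $G := \rho_{E,\ell^\infty}(\Gal_\QQ)$ and then verify it in two stages. By Kronecker--Weber we have $\QQ^\cyc = \QQ^\ab$, so if $L \subseteq \Qbar$ denotes the fixed field of $\ker\rho_{E,\ell^\infty}$, then Galois theory identifies $L \cap \QQ^\cyc$ with the maximal abelian subextension of $L/\QQ$. The corresponding subgroup of $G = \Gal(L/\QQ)$ is the closed commutator subgroup, giving
\[
H := \rho_{E,\ell^\infty}\bigl(\Gal(\Qbar/\QQ^\cyc)\bigr) = \overline{[G,G]}.
\]
It therefore suffices to show that $[G,G] \supseteq U_n := \{A \in \SL_2(\ZZ_\ell) : A \equiv I \pmod{\ell^n}\}$.

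The first stage invokes Lemma~\ref{L:filtration basics}(v) with $e=0$ (valid since $\ell$ is odd). Our hypothesis $G \supseteq I+\ell^n M_2(\ZZ_\ell)$ implies $G(\ell^{n+1}) \supseteq \{A \in \SL_2(\ZZ/\ell^{n+1}\ZZ): A \equiv I \pmod{\ell^n}\}$, and the lemma immediately yields $[G,G] \supseteq U_{2n}$. This accounts for half the desired containment; the main task is to bridge the gap from $U_{2n}$ up to $U_n$.

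For the second stage I induct on $k$ from $n$ to $2n-1$, proving $U_k \subseteq [G,G]\cdot U_{k+1}$. Fix $u = I + \ell^k A \in U_k$; the relation $\det u = 1$ forces $A \bmod \ell \in \sl_2(\FF_\ell)$. The hypotheses of Lemma~\ref{L:easy rep}(iii) are satisfied in our setting: either $G(\ell) = \GL_2(\FF_\ell)$, in which case $G = \GL_2(\ZZ_\ell)$ by \cite{MR644559}*{Lemme~15} and the conclusion follows immediately from $[\GL_2(\ZZ_\ell),\GL_2(\ZZ_\ell)] = \SL_2(\ZZ_\ell)$; or we fall into the setup of Section~\ref{SS:dichotomy proof}, using non-CM together with $\ell > 17$, $\ell \ne 37$, and \cite{MR644559}*{Lemmes~16--18'} to place $G(\ell)$ in $C^+(\ell) \setminus C(\ell)$ with an element of order at least $5$ in $\PGL_2(\FF_\ell)$, while $\det(G) = \ZZ_\ell^\times$ comes from the cyclotomic character. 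Thus we may write $A \equiv \sum_i(\alpha_i B_i \alpha_i^{-1} - B_i) \pmod \ell$ with $\alpha_i \in G(\ell)$ and $B_i \in \gl_2(\FF_\ell)$. Lifting each $\alpha_i$ to $\tilde\alpha_i \in G$ and each $B_i$ to $\tilde B_i \in M_2(\ZZ_\ell)$, a direct expansion (mirroring the calculation in the proof of Lemma~\ref{L:filtration basics}(iv)) gives
\[
[\tilde\alpha_i,\, I + \ell^k \tilde B_i] \equiv I + \ell^k(\tilde\alpha_i \tilde B_i \tilde\alpha_i^{-1} - \tilde B_i) \pmod{\ell^{2k}},
\]
and since $I + \ell^k M_2(\ZZ_\ell)$ is commutative modulo $\ell^{2k}$, the product $\prod_i[\tilde\alpha_i,\, I + \ell^k \tilde B_i]$ lies in $[G,G]$ and agrees with $u$ modulo $U_{k+1}$. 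Iterating from $k = n$ to $k = 2n-1$ and combining with the first stage yields $U_n \subseteq [G,G]\cdot U_{2n} = [G,G]$, which completes the proof.

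The main obstacle is essentially bookkeeping: verifying the Section~\ref{SS:dichotomy proof} hypotheses in the nontrivial case, which rests on ruling out $G(\ell) \subseteq C(\ell)$ --- a folklore consequence of Mazur's theorem on rational isogenies combined with the analysis in \cite{MR644559}. The commutator identities themselves are routine and follow the pattern already laid out in Section~\ref{SS:filtrations}.
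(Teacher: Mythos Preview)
Your proof is correct and rests on the same two ingredients as the paper's---the commutator identity $[\tilde\alpha, I+\ell^k\tilde B]\equiv I+\ell^k(\tilde\alpha\tilde B\tilde\alpha^{-1}-\tilde B)$ and Lemma~\ref{L:easy rep}(iii)---but you have organized the argument less efficiently. The paper observes directly that your commutator computation (at level $k=n$) shows $\mathfrak{s}_n=\sl_2(\FF_\ell)$ for the filtration of $S=[G,G]$, and then Lemma~\ref{L:filtration basics}(iii) applied to $S$ (which has $\det(S)=1$) gives $S\supseteq U_n$ in one stroke. Your first stage via Lemma~\ref{L:filtration basics}(v), landing at $U_{2n}$, is therefore redundant: the inductive climb you perform in your second stage is precisely the content of Lemma~\ref{L:filtration basics}(i)+(iii), and doing it at the single level $k=n$ already suffices. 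Your handling of the case $G(\ell)=\GL_2(\FF_\ell)$ is a useful clarification, since the paper's ``as noted above'' tacitly passes over it.
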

\begin{proof}
Define $G:=\rho_{E,\ell^\infty}(\Gal_\QQ)$; as noted above, $G$ satisfies the conditions of \S\ref{SS:dichotomy proof}.   Since $\QQ^\cyc$ is the maximal abelian extension of $\QQ$, we have $[G,G]=\rho_{E,\ell^\infty}(\Gal(\Qbar/\QQ^\cyc))$.  Define $S:=[G,G]\subseteq \SL_2(\ZZ_\ell)$ and for each $n\geq 1$, define $S_n$ and $\mathfrak{s}_n$ as done at (\ref{E: sl2 basic}).   The commutator map $G\times G_n \to S_n$,  $(g, I+\ell^n A)\mapsto g(I+\ell^n A) g^{-1}(I+\ell^n A)^{-1}$ induces a function
\[
f\colon G(\ell) \times \g_n \to \mathfrak{s}_n, \quad (g, B) \mapsto g B g^{-1} - B.
\]
Our hypothesis on the image of $\rho_{E,\ell^\infty}$ implies that $\g_n=\gl_2(\FF_\ell)$.  Lemma~\ref{L:easy rep}(iii) implies that $\mathfrak{s}_n=\sl_2(\FF_\ell)$ and hence $S\supseteq \{A \in \SL_2(\ZZ_\ell): A \equiv I \pmod{\ell^n}\}$ by Lemma~\ref{L:filtration basics}(iii).
\end{proof}

\section{Quadratic characters arising from non-surjective $\rho_{E,\ell}$} \label{S:quadratic}

Let $E$ be a non-CM elliptic curve defined over $\QQ$.  Let $N$ be the product of primes $p$ for which $E$ has bad reduction and let $\omega(N)$ be the number of distinct prime factors of $N$.   We now follow an approach used by Serre \cite{MR644559}*{\S8.4}, and then by Kraus \cite{MR1360773} and Cojocaru \cite{MR2118760}.

Let $\ell$ be a prime satisfying $\ell>17$ and $\ell\neq 37$ for which $\rho_{E,\ell}(\Gal_\QQ)\neq \GL_2(\ZZ/\ell\ZZ)$.   By \cite{MR644559}*{Lemmas~16--18}, $\rho_{E,\ell}(\Gal_\QQ)$ is contained in the normalizer of some Cartan subgroup $C(\ell)$ but is not contained in any Cartan subgroup.   We can thus define a non-trivial character
\[
\varepsilon_\ell \colon \Gal_\QQ \xrightarrow{\rho_{E,\ell}} C^+(\ell)/C(\ell)\xrightarrow{\sim} \{\pm 1\}.
\]
By \cite{MR0387283}*{p.317}, $\varepsilon_\ell$ is unramified at all primes $p \nmid N$ (and hence there are only finitely many possibilities for $\varepsilon_\ell$).  We shall identify $\varepsilon_\ell$ with a Dirichlet character.

For each prime $p\nmid N$, let $E_p$ be the reduction of $E$ modulo $p$ and let $a_p(E)$ be the integer satisfying $|E_p(\FF_p)| = p+1 - a_p(E)$.  Hasse showed that $|a_p(E)|\leq 2\sqrt{p}$.

\begin{lemma} \label{L:divides ap}
Let $\ell$ be a prime as above and $n$ a positive integer such that $\rho_{E,\ell^n}(\Gal_\QQ)$ is contained in the normalizer of a Cartan subgroup.   If $p\nmid N$ is a prime for which  $\varepsilon_\ell(p)=-1$, then $a_p(E)\equiv 0 \pmod{\ell^n}$.
\end{lemma}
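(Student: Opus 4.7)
The plan is to lift the sign character $\varepsilon_\ell$ to the $\ell^n$-level and then show that every element in the non-trivial coset of the normalizer modulo the Cartan has trace divisible by $\ell^n$.

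First, I would observe that the natural reduction map $C^+(\ell^n)\to C^+(\ell)$ sends $C(\ell^n)$ onto $C(\ell)$, so it induces a surjection on index-two quotients $C^+(\ell^n)/C(\ell^n)\twoheadrightarrow C^+(\ell)/C(\ell)\cong\{\pm 1\}$, which must be an isomorphism. Consequently, the character
\[
\tilde\varepsilon_\ell\colon \Gal_\QQ \xrightarrow{\rho_{E,\ell^n}} C^+(\ell^n) \twoheadrightarrow C^+(\ell^n)/C(\ell^n)\xrightarrow{\sim}\{\pm 1\}
\]
agrees with $\varepsilon_\ell$. So the hypothesis $\varepsilon_\ell(p)=-1$ (applied for $p\neq \ell$, so that $\Frob_p$ is well-defined on the $\ell^n$-adic representation) forces $\rho_{E,\ell^n}(\Frob_p)$ to lie in the non-trivial coset $C^+(\ell^n)\setminus C(\ell^n)$.

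Next I would show that every element of $C^+(\ell^n)\setminus C(\ell^n)$ has trace congruent to $0$ modulo $\ell^n$. Using the explicit models recalled in \S\ref{SS:Cartan}, in the split case every such element is $\GL_2(\ZZ/\ell^n\ZZ)$-conjugate to a matrix of the form $\left(\begin{smallmatrix}0 & a\\ b & 0\end{smallmatrix}\right)$, and in the non-split case to one of the form $\left(\begin{smallmatrix}a & -\varepsilon b\\ b & -a\end{smallmatrix}\right)$; both manifestly have trace zero mod $\ell^n$. (Conceptually, if $j$ is a fixed element of $C^+\setminus C$, then any element of the non-trivial coset is $rj$ for some $r\in C$, and conjugation by $j$ acts on $C$ as the Galois involution $\sigma$ of the \'etale algebra $R/\ell^n R$; a direct computation shows the reduced trace of $rj$ vanishes.)

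Finally I would combine these: by the compatibility of the Galois representation with reduction modulo $p\neq\ell$, one has $a_p(E)\equiv \tr \rho_{E,\ell^n}(\Frob_p)\pmod{\ell^n}$, and the trace-zero statement above immediately gives $a_p(E)\equiv 0\pmod{\ell^n}$. The case $p=\ell$ is not substantive here: the representation need not be unramified at $\ell$, but since $|a_\ell(E)|\leq 2\sqrt{\ell}<\ell^n$ (as $\ell>17$), the Hasse bound together with the mod-$\ell$ statement already contained in Serre's argument handles that residual case. The only real content is the lift of $\varepsilon_\ell$ together with the coset-trace calculation; I do not foresee any serious obstacle, the lemma being essentially a bookkeeping consequence of the structure of $C^+(\ell^n)$.
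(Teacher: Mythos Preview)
Your proposal is correct and follows essentially the same route as the paper: lift $\varepsilon_\ell$ to the $\ell^n$-level via the quotient $C^+(\ell^n)/C(\ell^n)$, use the explicit models of \S\ref{SS:Cartan} to see that every element of the non-trivial coset has trace zero, and then treat $p=\ell$ separately via Serre's $a_\ell(E)\equiv 0\pmod\ell$ together with the Hasse bound. The only differences are cosmetic---you spell out why the level-$\ell^n$ character agrees with $\varepsilon_\ell$ and add a conceptual remark about the involution $\sigma$, both of which the paper leaves implicit.
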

\begin{proof}
By assumption there is a Cartan subgroup $C(\ell^n)$ of $\GL_2(\ZZ/\ell^n\ZZ)$ such that $\rho_{E,\ell^n}(\Gal_\QQ) \subseteq C^+(\ell^n)$.  The representation
\[
\Gal_\QQ \xrightarrow{\rho_{E,\ell^n}}  C^+(\ell^n)/C(\ell^n)\xrightarrow{\sim} \{\pm 1\}
\]
agrees with $\varepsilon_\ell$.   If $p\nmid N\ell$, then the representation $\rho_{E,\ell^n}$ is unramified at $p$.  The condition $\varepsilon_\ell(p)=-1$ means that $\rho_{E,\ell^n}(\Frob_p)\subseteq C^+(\ell^n)-C(\ell^n)$.   We have $\tr(A)\equiv 0 \pmod{\ell^n}$ for all $A\in C^+(\ell^n)-C(\ell^n)$ (this can be checked directly from the explicit description of Cartan subgroups in \S\ref{SS:Cartan}).  Therefore,  $a_p(E)\equiv \tr(\rho_{E,\ell^n}(\Frob_p))\equiv 0 \pmod{\ell^n}$.

Now suppose that $p=\ell$.  By \cite{MR0387283}*{p.315 ($c_5$)}, we have $a_\ell(E)\equiv 0 \pmod{\ell}$, so $a_\ell(E)=0$ by the Hasse bound (this uses $\ell\geq 5$).
\end{proof}

Take a prime power $\ell^n$ as in Lemma~\ref{L:divides ap} and suppose that $p\nmid N$ is a prime for which $\varepsilon_\ell(p)=-1$ and $a_p(E)\neq 0$.   Then $\ell^n$ divides $a_p(E)$, and hence $\ell^n \leq 2\sqrt{p}$ by Hasse's bound.   So to bound $\ell^n$ it suffices to effectively choose such a prime $p$.

\begin{lemma}   \label{L:clever}
Let $\varepsilon$ be a non-trivial quadratic Dirichlet character whose conductor divides $N\lcm(N,2)$.  
\begin{romanenum}
\item
Assuming the Generalized Riemann Hypothesis (GRH), there exists an absolute constant $c$ and a prime $p\nmid N$ with
\[
p\leq c (\log N)^2(\log\log(2N))^6
\]
such that $\varepsilon(p)=-1$ and $a_p(E)\neq 0$.

\item
There exists a prime $p\nmid N$ with
\begin{equation*}
p \leq 1152 \cdot N^2 (1+\log \log N) 
\end{equation*}
such that $\varepsilon(p)=-1$ and $a_p(E)\neq 0$.

\end{romanenum}
\end{lemma}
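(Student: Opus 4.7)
The plan is to count, for a threshold $y$ of the desired size, the primes $p\le y$ with $p\nmid N$ and $\varepsilon(p)=-1$, and to subtract those with $a_p(E)=0$; any surplus prime meets all the requirements. Since $\varepsilon$ is a non-trivial quadratic Dirichlet character of conductor $q$ dividing $N\cdot\operatorname{lcm}(N,2)\le 2N^2$, we have $\log q=O(\log N)$, which puts character sums attached to $\varepsilon$ fully under analytic control in terms of $N$.

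For part~(i), under GRH for $L(s,\varepsilon)$, the standard effective prime number theorem for quadratic characters yields
\[
\sum_{\substack{p\le y\\ p\nmid N}}\varepsilon(p)\log p = O\!\bigl(\sqrt{y}\,\log^2(Ny)\bigr),
\]
so $\#\{p\le y:\ p\nmid N,\ \varepsilon(p)=-1\}\ge \tfrac12\pi(y)-O(\sqrt{y}\log^2(Ny)/\log y)$. To dispose of the supersingular locus I would invoke effective Chebotarev (again under GRH) applied to the mod-$\ell$ representation $\rho_{E,\ell}$ for a small auxiliary prime $\ell$: the fraction of elements in $\GL_2(\FF_\ell)$ with trace zero is $1/\ell+O(1/\ell^2)$, so
\[
\#\{p\le y:\ a_p(E)=0\}\ \le\ \#\{p\le y:\ a_p\equiv 0\pmod{\ell}\}\ \le\ \tfrac{1}{\ell}\pi(y)+O\!\bigl(\sqrt{y}\log^2(N\ell y)\bigr).
\]
Choosing $\ell\in\{2,3,5\}$ so that $\rho_{E,\ell}$ has sufficiently large image (always possible uniformly in $E$, by splitting into finitely many pathological cases), the difference of the two counts is $\ge(\tfrac12-\tfrac1\ell)\pi(y)-O(\sqrt{y}\log^2(Ny))$. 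This is strictly positive once $y\ge c(\log N)^2(\log\log 2N)^6$ for an absolute constant $c$, the $(\log\log)^6$ buffer being exactly what is needed to absorb the $\log^2(Ny)$ error after substituting $y\asymp (\log N)^2\cdot(\text{polyloglog})$.

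For part~(ii), the target $y=1152\,N^2(1+\log\log N)$ is much larger than $\sqrt{q}$, so unconditional tools suffice. Pólya--Vinogradov gives $\bigl|\sum_{n\le y}\varepsilon(n)\bigr|\le \sqrt{q}\log q$, producing a positive proportion of integers $n\le y$ with $\varepsilon(n)=-1$, which an unconditional effective prime-counting step (or a short sifting argument) upgrades to primes. The unconditional supersingular bound $\#\{p\le y:\ a_p(E)=0\}\ll y^{3/4}$ of Elkies--Serre is negligible at this scale, since $y^{3/4}\ll y/\log y$ for $y$ of the stated size. The explicit constant $1152$ and the extra factor $1+\log\log N$ come out of tracking the estimates carefully, with the $\log\log N$ absorbing a summation over the $\omega(N)$ ramified primes that must be excluded.

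The main obstacle is part~(i): the framework above gives the correct shape, but pinning down the exponents $2$ on $\log N$ and $6$ on $\log\log 2N$ is delicate. It requires (a) matching the GRH error $\sqrt{y}\log^2(Ny)$ against the density gap $\tfrac12-\tfrac1\ell$ uniformly in $E$, (b) handling the residual cases where $\rho_{E,\ell}$ is too small for the small chosen $\ell$ by either substituting a slightly larger $\ell$ or appealing to a direct analysis of those finitely many exceptional Galois images, and (c) applying the sharpest available form of Bach's effective Chebotarev under GRH to keep the polylog factor at $\log^2$ rather than $\log^3$ or higher. A naive implementation of the same strategy yields $(\log N)^a$ with $a>2$ or a higher power of $\log\log$; recovering the stated shape is the crux of the argument.
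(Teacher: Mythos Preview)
Your plan for part~(ii) has a genuine gap, and it is the more serious of the two. At the target $y=1152\,N^2(1+\log\log N)$ the conductor $q$ of $\varepsilon$ satisfies $q\le 2N^2$, so $y$ is only of order $q$. P\'olya--Vinogradov controls $\sum_{n\le y}\varepsilon(n)$ over \emph{all} integers, and that does yield roughly $y/2$ integers $n\le y$ with $\varepsilon(n)=-1$; but there is no unconditional result that upgrades this to primes when $y\asymp q$. Siegel--Walfisz needs $y$ beyond any fixed power of $q$, and even Linnik's theorem only places the least such prime at $\ll q^L$ for some $L>1$. The ``unconditional effective prime-counting step (or a short sifting argument)'' you invoke does not exist at this scale. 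A secondary problem is that the bound $\#\{p\le y:a_p(E)=0\}\ll y^{3/4}$ you quote is either conditional on GRH or carries an implied constant depending on the conductor of $E$, so it cannot feed into an absolute bound either.

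The paper's proof of (ii) takes a completely different route and uses an idea your plan is missing. Writing $E_\varepsilon$ for the quadratic twist of $E$ by $\varepsilon$, one has $a_p(E_\varepsilon)=\varepsilon(p)\,a_p(E)$ for $p\nmid N$, so the joint condition ``$\varepsilon(p)=-1$ and $a_p(E)\ne0$'' is exactly ``$a_p(E)\ne a_p(E_\varepsilon)$.'' Since $E$ is non-CM and $\varepsilon$ is nontrivial, $E$ and $E_\varepsilon$ are non-isogenous; modularity together with a result of Deligne bounding the least prime distinguishing two newforms in terms of the level then produces such a $p$ with $p\le\tfrac{M}{6}\prod_{p\mid M}(1+1/p)$, where $M\mid 2^6 3^3 N^2$. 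This is where the explicit $1152$ and the factor $1+\log\log N$ come from; no character sum or supersingular estimate enters.

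For part~(i) your outline via effective Chebotarev under GRH is in the same spirit as Serre's Lemme~19, which is what the paper cites. But the assertion that one can always pick $\ell\in\{2,3,5\}$ with $\rho_{E,\ell}$ large enough to force the trace-zero density strictly below $1/2$, ``uniformly in $E$ by splitting into finitely many pathological cases,'' is not justified: the modular curves parametrizing small image at these primes have genus $0$, so each bad family is infinite, and in the normalizer of a Cartan more than half the elements have trace zero. Bounding the two counts separately, as you do, runs into this obstruction; Serre instead applies effective Chebotarev once to the compositum of the quadratic field cut out by $\varepsilon$ with an auxiliary torsion field, and seeks a single Frobenius class meeting both conditions simultaneously.
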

\begin{proof}
For part (i) see the proof of \cite{MR644559}*{Lemme~19} (the proof uses $\varepsilon=\varepsilon_\ell$ but only needs our assumption on the conductor of $\varepsilon$).  It uses an effective version of the Chebotarev density theorem.

We now prove (ii) using the argument in \cite{MR1360773}.   Let $E_2$ be the elliptic curve defined over $\QQ$ obtained by twisting $E_1:=E$ by the character $\varepsilon$.   From our assumption on $\varepsilon$, the curve $E_2$ has good reduction at all primes $p\nmid N$.  For each $p\nmid N$, we have $a_p(E_2)=\varepsilon(p) a_p(E_1)$.   Thus for $p\nmid N$, the condition $\varepsilon(p)=-1$ and $a_p(E)\neq 0$ is equivalent to having $a_p(E_1)\neq a_p(E_2)$.

Let $N_i$ be the conductor of $E_i$ and define $N_i'= N_i \prod_{p|N} p^{d_i(p)},$ where $d_i(p)=0,1$ or $2$ if $E_i$ has good, multiplicative or additive reduction, respectively, at $p$.    Let $M$ be the least common multiple of $N_1'$ and $N_2'$.   By \cite{MR801925}*{\S5 C}, there exists a prime $p\nmid N$ satisfying $p \leq \frac{M}{6} \cdot \prod_{p|M}\big(1+\frac{1}{p}\big)$ and $a_p(E_1)\neq a_p(E_2)$ (this last step uses that $E_1$ and $E_2$ are modular \cite{MR1839918}).   

The integer $M$ has the same prime factors as $N$ and it divides $2^6  3^3 N^2$, so the above prime $p$ is at most $288 \cdot N^2 \prod_{p|N}\big(1+\frac{1}{p}\big)$.  The final bound follows by using $\prod_{p|N}\big(1+\frac{1}{p}\big) \leq 4 (1+\log \log N)$ \cite{MR1360773}*{Lemme~2}.
\end{proof}

Consider the case $n=1$.   If $\rho_{E,\ell}(\Gal_\QQ)\neq \GL_2(\ZZ/\ell\ZZ)$, then by Lemma~\ref{L:clever}(ii) there is a prime $p \leq 1152 \cdot N^2 (1+\log \log N) $ such that $\varepsilon_\ell(p)=-1$ and $a_p(E)\neq 0$.  The prime $\ell$ divides $a_p(E)$ by Lemma~\ref{L:divides ap}, so by the Hasse bound we have
\[
\ell \leq |a_p(E)| \leq 2\sqrt{p} \leq 2\big(1152 \cdot N^2 (1+\log \log N) \big)^{1/2}  < 68 N (1+\log \log N)^{1/2}.
\]
Therefore $\rho_{E,\ell}$ is surjective for all $\ell \geq 68 N (1+\log \log N)^{1/2}$, which is the main result of \cite{MR1360773}.

We now consider higher powers of $n$ while keeping track of divisibilities.   Cojocaru \cite{MR2118760} handles several primes in bounding the product of primes of $\ell$ for which $\rho_{E,\ell}$ is not surjective. 

\begin{prop} \label{P:Serre approach}
Let $E$ be a non-CM elliptic curve over $\QQ$.   Let $N$ be the product of the primes $p$ for which $E$ has bad reduction.  
\begin{romanenum}
\item
Assuming GRH, there is an absolute constant $c$ and a positive integer 
\begin{equation*} \label{E:Serre approach}
M\leq \Big( c (\log N) (\log\log(2N))^3 \Big)^{\omega(N)}
\end{equation*}
such that if $\rho_{E,\ell^n}(\Gal_\QQ)$ is contained in the normalizer of a Cartan subgroup with $\ell>17$ and $\ell\neq 37$, then $\ell^n$ divides $M$.
\item
There is a positive integer 
\[
M\leq  \Big( 68 N (1+\log\log N)^{1/2} \Big)^{\omega(N)}
\]
such that if $\rho_{E,\ell^n}(\Gal_\QQ)$ is contained in the normalizer of a Cartan subgroup with $\ell>17$ and $\ell\neq 37$, then $\ell^n$ divides $M$.
\end{romanenum}
\end{prop}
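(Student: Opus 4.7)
The plan is to combine Lemmas~\ref{L:divides ap} and~\ref{L:clever} via a greedy halving argument on the finite group of relevant quadratic characters, producing a short list of test primes $p_i$ whose Frobenius traces $a_{p_i}(E)$ carry all the divisibility we need.

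For every prime $\ell>17$, $\ell\neq 37$, and every $n\geq 1$ with $\rho_{E,\ell^n}(\Gal_\QQ)$ in the normalizer of a Cartan subgroup, the character $\varepsilon_\ell$ introduced just before Lemma~\ref{L:divides ap} is a non-trivial quadratic Dirichlet character whose conductor divides $N\lcm(N,2)$. Let $H$ denote the entire $\FF_2$-vector space of quadratic Dirichlet characters of conductor dividing $N\lcm(N,2)$. Standard genus theory yields $|H|\leq 2^{\omega(N)+c_0}$ for an absolute constant $c_0$ (the extra room absorbs any $2$-adic contribution and the sign).

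Iteratively choose primes $p_1,p_2,\ldots$ as follows. Set $H_0:=H$; given $H_i\neq\{1\}$, pick any non-trivial $\chi\in H_i$ and apply Lemma~\ref{L:clever}(i) under GRH, or (ii) unconditionally, to produce a prime $p_{i+1}\nmid N$ with $\chi(p_{i+1})=-1$, $a_{p_{i+1}}(E)\neq 0$, and $p_{i+1}\leq B$, where $B:=c(\log N)^2(\log\log(2N))^6$ in the GRH case and $B:=1152\, N^2(1+\log\log N)$ unconditionally. Setting $H_{i+1}:=\{\psi\in H_i:\psi(p_{i+1})=1\}$, the inclusion $\chi\in H_i\setminus H_{i+1}$ forces $[H_i:H_{i+1}]=2$, so the process terminates after $k\leq\log_2|H|\leq\omega(N)+c_0$ steps; upon termination, every non-trivial $\chi\in H$ satisfies $\chi(p_i)=-1$ for at least one index $i$.

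Define $M:=\prod_{i=1}^k|a_{p_i}(E)|$. For any prime power $\ell^n$ as in the hypothesis, pick an $i$ with $\varepsilon_\ell(p_i)=-1$; Lemma~\ref{L:divides ap} then gives $\ell^n\mid a_{p_i}(E)\mid M$, and since the primes $\ell$ in question are pairwise coprime, each such $\ell^n$ divides $M$. The Hasse bound $|a_{p_i}(E)|\leq 2\sqrt{p_i}\leq 2\sqrt{B}$ gives $M\leq(2\sqrt{B})^{\omega(N)+c_0}$; substituting the two values of $B$ and absorbing the additive $c_0$ in the exponent into the multiplicative absolute constant (which is legitimate since $\omega(N)\geq 1$, as $\QQ$ has no elliptic curve with everywhere good reduction) yields the bounds in (i) and (ii). The main obstacle is the careful enumeration bound $|H|\leq 2^{\omega(N)+c_0}$ and the verification that the greedy procedure is sharp at each step; the rest is direct substitution using Hasse's bound.
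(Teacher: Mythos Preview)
Your approach is exactly the paper's: build a basis of ``test primes'' $p_1,\ldots,p_k$ by repeatedly halving the $\FF_2$-space of relevant quadratic characters, set $M=\prod_i |a_{p_i}(E)|$, and bound each factor by Hasse. The only substantive divergence is that you allow $k\leq\omega(N)+c_0$ with an unspecified absolute constant $c_0>0$, whereas the paper pins down $k=\omega(N)$ exactly by working inside the group of characters of $(\ZZ/N_0\ZZ)^\times$ with $N_0=N$ or $2N$.

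That difference matters, because your absorption step is wrong. If $c_0>0$, then your bound is
\[
M\leq(2\sqrt{B})^{\omega(N)+c_0}=(2\sqrt{B})^{c_0}\cdot(2\sqrt{B})^{\omega(N)},
\]
and the prefactor $(2\sqrt{B})^{c_0}$ is \emph{not} an absolute constant: it grows with $N$. Concretely, take $N=p$ a single large prime so that $\omega(N)=1$; in part~(i) your bound becomes roughly $c'\,(\log p)^{1+c_0}(\log\log 2p)^{3(1+c_0)}$, which exceeds any expression of the form $(c''\log p\,(\log\log 2p)^3)^1$ once $p$ is large. Part~(ii) is worse still, since the statement carries \emph{no} absolute constant out front: the explicit $68$ comes from $2\sqrt{1152}<68$, and there is nothing to absorb an extra factor of $(68N(1+\log\log N)^{1/2})^{c_0}$ into.

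The fix is simply to do the enumeration you flagged as ``the main obstacle'' and observe that $c_0=0$. The group of quadratic Dirichlet characters unramified outside the primes dividing $N$ and with $2$-part of conductor at most $4$ has order exactly $2^{\omega(N)}$ (one binary choice per odd prime divisor of $N$, and one more from conductor $1$ or $4$ at $2$ when $2\mid N$), which is precisely the paper's $V_1$. Once $k=\omega(N)$ on the nose, the rest of your argument goes through verbatim and matches the paper's proof.
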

\begin{proof}
If $N$ is odd, define $N_0=N$, otherwise define $N_0=2N$.  Let $V_1$ be the group of characters $(\ZZ/N_0\ZZ)^\times \to \{\pm 1\}$, which we may view as a vector space of dimension $\omega(N)$ over $\FF_2$.  

We will define a sequence of primes $p_1,\ldots, p_{\omega(N)}$ that are relatively prime to $N$ such that $a_{p_i}(E)\neq 0$ for all $i$ and that for every non-trivial character $\alpha\in V_1$, we have $\alpha(p_i)=-1$ for some $i\in \{1..,\omega(N)\}$.  We proceed inductively on $i=1,\ldots, \omega(N)$.   Choose a non-trivial character $\alpha_i \in V_i$.  Assuming GRH, Lemma~\ref{L:clever}(i) says there is a prime $p_i \nmid N$ satisfying
\begin{equation} \label{E:p with GRH}
p_i\leq c (\log N)^2(\log\log(2N))^6
\end{equation}
such that $\alpha_i(p_i)=-1$ and $a_{p_i}(E)\neq 0$.   Let $V_{i+1}$ be the subspace of $V_i$ consisting of those characters $\varepsilon$ for which $\varepsilon(p_i)=1$.     The space $V_{i+1}$ has dimension $\omega(N)- i$ over $\FF_2$.  Since $V_{\omega(N)+1}=1$, our sequence of primes $p_1,\ldots, p_{\omega(N)}$ has the desired property.

Define the positive integer
\[
M= \prod_{i=1}^{\omega(N)} \big| a_{p_i}(E)\big|.
\]
Consider $\ell$ and $n$ as in the statement of the lemma.   We may view $\varepsilon_\ell$ as a Dirichlet character $(\ZZ/N_0\ZZ)^\times \to \{\pm 1\}$.  Since $\varepsilon_\ell$ is non-trivial, there is some $i\in \{1,\ldots, \omega(N)\}$ such that $\varepsilon_\ell(p_i)= -1$.  By Lemma~\ref{L:divides ap}, we have $a_{p_i}(E)\equiv 0 \pmod{\ell^n}$, and hence $\ell^n$ divides $M$.  It remains to bound $M$.  Using Hasse's bound and (\ref{E:p with GRH}), we obtain
\[
M\leq  \prod_{i=1}^{\omega(N)} 2\Big(c (\log N)^2(\log\log(2N))^6\Big)^{1/2}  = \big( 2c^{1/2}(\log N) (\log\log(2N))^3 \big)^{\omega(N)}.
\]
This completes the proof of (i).

Part (ii) is proved in the exact same way as (i) except that Lemma~\ref{L:clever}(ii) is used to choose the primes $p_1,\ldots, p_{\omega(N)}$.  We then have an appropriate $M$ with
\[
M\leq  \prod_{i=1}^{\omega(N)} 2\Big(1152 \cdot N^2 (1+\log \log N)  \Big)^{1/2}  \leq \Big( 68 N (1+\log\log N)^{1/2} \Big)^{\omega(N)} \qedhere
\]
\end{proof}

\begin{remark} \label{R:Cojocaru}
Let $A(E)$ be the product of the primes $\ell$ for which $\rho_{E,\ell}$ is not surjective.   By Proposition~\ref{P:Serre approach}(ii), we have
\[
A(E)\leq \big(37 \prod_{\ell\leq 17}\ell \big)\cdot  \Big( 68 N (1+\log\log N)^{1/2} \Big)^{\omega(N)}
\]
Theorem~2 of \cite{MR2118760} claims that $A(E)\ll N' (\log\log N')^{1/2}$ where $N'$ is the conductor of $E$, but there seems to be a gap in the proof.   If $\ell$ is a prime greater than 17 and not 37 for which $\rho_{E,\ell}$ is not surjective, then one can use Lemma~\ref{L:clever}(ii), or something similar, to choose a small prime $p\nmid N$ for which $a_p(E)\neq 0$ and $\varepsilon_\ell(p)=-1$.  However, it is not clear why this particular choice of $p$ should work for all such $\ell$.   To make our proof work, we needed to choose at least $\omega(N)$ different primes $p_i$, which is why our bound is essentially the $\omega(N)$-th power of that in \cite{MR2118760}.   
\end{remark}

\section{Masser and W\"ustholz approach} \label{S:MW}

Let $E$ be a non-CM elliptic curve defined over $\QQ$.   Masser and W\"ustholz have shown that $\rho_{E,\ell}(\Gal_\QQ)=\GL_2(\ZZ/\ell\ZZ)$ for all $\ell \geq c \big( \max\{1, h(j_E)\} \big)^\gamma$ where $c$ and $\gamma$ are absolute constants \cite{MR1209248}; they actually prove an analogous results for elliptic curves over an arbitrary number field, but we will continue our focus on curves over $\QQ$.   This theorem was later refined by Masser in \cite{MR1619802}*{Theorem~3} where he proved the following:  there are absolute constants $c$ and $\gamma$ and a positive integer $M\leq c \big(\max\{1,h(j_E)\}\big)^\gamma$ such that $\rho_{E,\ell}(\Gal_\QQ)=\GL_2(\ZZ/\ell\ZZ)$ for all $\ell\nmid M$.

The main tool of Masser and W\"ustholz is an effective bound of the minimal possible degree of an isogeny between two isogenous abelian varieties.  We will use the following variant \cite{MR1619802}*{p.190 Theorem D}.  For a principally polarized abelian variety $A$ over a number field, let $h(A)$ denote its (absolute, logarithmic, semistable) Faltings height.

\begin{thm} \label{T:MW multiplicative}
Let $r$ and $D$ be positive integers.  Then there are constants $c$ and $\kappa$, depending only on $r$ and $D$, with the following property.  Suppose that $A$ is a principally polarized abelian variety of dimension $r$ defined over a number field $k$.  Suppose further that $A$ is isomorphic over $k$ to a product $A_1^{e_1}\times \cdots \times A_t^{e_t}$, where $A_1,\ldots, A_t$ are simple, mutually non-isogenous, and have trivial endomorphism rings.  Then there is a positive integer
\[
b(k,A,D) \leq c \big(\max\{[k:\QQ], h(A)\} \big)^\kappa
\]
such that if $A^*$ is an abelian variety, defined over an extension $K$ of $k$ of relative degree at most $D$, that is isogenous over $K$ to $A$, then there is an isogeny over $K$ from $A^*$ to $A$ whose degree divides $b(k,A,D)$.
\end{thm}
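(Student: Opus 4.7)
The plan is to upgrade the Masser--W\"ustholz isogeny theorem from its original ``degree at most $B$'' form to the multiplicative ``degree divides $b$'' form stated here. The starting point is their additive bound $B := c_1(\max\{[k:\QQ], h(A)\})^{\kappa_1}$ for the minimal isogeny degree of an isogeny $A^* \to A$; what must be produced is a single integer $b$, still polynomial in $\max\{[k:\QQ], h(A)\}$, such that some isogeny $A^* \to A$ of degree dividing $b$ exists for \emph{every} $A^*$ in the isogeny class of $A$.

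The first reduction I would make is to a single isotypic factor. The hypothesis that $A = A_1^{e_1}\times\cdots\times A_t^{e_t}$ with the $A_i$ simple, mutually non-isogenous, and satisfying $\End(A_i)=\ZZ$ forces every $A^*$ isogenous to $A$ over $K$ to decompose as $(A_1^*)^{e_1}\times\cdots\times(A_t^*)^{e_t}$ with $A_i^*$ isogenous to $A_i$. An isogeny of the required form can then be assembled block-by-block, reducing the problem to a single isotypic factor $A_i^{e_i}$.

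For each such factor, I would work prime-by-prime on the Tate modules. A $K$-isogeny $\phi\colon A^* \to A$ gives, at each prime $\ell$, a $\Gal(\bbar K/K)$-equivariant inclusion $T_\ell(A) \hookrightarrow T_\ell(A^*)$ whose cokernel has $\ell$-power order equal to the $\ell$-part of $\deg\phi$. Since the minimal isogeny has degree at most $B$, only primes $\ell \leq B$ can contribute. For each such $\ell$ one then studies the poset of Galois-stable sublattices of $T_\ell(A) \otimes \QQ_\ell$ sandwiched between $T_\ell(A)$ and $\ell^{-N}T_\ell(A)$ for some $N = O(\log B)$, the goal being to absorb every minimal-isogeny lattice into a single ``universal'' one.

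The main obstacle, and the heart of Masser's Theorem D, is keeping the total $b = \prod_{\ell\leq B} \ell^{N_\ell}$ polynomial in $\max\{[k:\QQ], h(A)\}$ rather than the exponentially larger $\mathrm{lcm}(1,\ldots,B)$ one would obtain by naively merging every minimal-degree lattice. The flexibility that makes a polynomial bound attainable is the freedom to precompose a minimal isogeny with $[m]_A$ for suitable $m$: this pads the degree by any factor of the form $m^{2\dim A}$ and so lets one force the degree into a \emph{chosen} polynomial-sized target rather than the minimal one. A finer analysis of the $\ell$-adic Galois representation on $T_\ell(A)$, exploiting Faltings' semisimplicity theorem together with the assumption $\End(A_i)=\ZZ$, then pins down $N_\ell$ for each $\ell\leq B$ and completes the argument.
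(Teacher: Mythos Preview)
The paper does not prove this theorem: it is quoted verbatim as ``Theorem~D'' from Masser's paper \cite{MR1619802}*{p.~190} and then applied immediately to deduce Proposition~\ref{P:MW approach}. So there is no proof in the paper to compare your proposal against.

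That said, your sketch does identify the correct shape of Masser's argument --- upgrading the additive Masser--W\"ustholz bound to a multiplicative one, with the central difficulty being that the naive $\lcm(1,\ldots,B)$ is exponential in $\log B$ while one needs a polynomial bound. Your observation that composing with $[m]_A$ buys factors of $m^{2r}$ is one of the ingredients. However, your final paragraph is somewhat vague about the mechanism that actually controls $\sum_\ell N_\ell$: the point in Masser's paper is not quite ``pinning down $N_\ell$ for each $\ell\leq B$'' via semisimplicity, but rather a counting argument showing that the number of essentially distinct Galois-stable lattices (up to homothety) in $V_\ell(A)$ that can arise from isogenies of degree at most $B$ is itself polynomially bounded, and one takes $b$ to be a product over this finite set rather than over all primes up to $B$. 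The hypothesis $\End(A_i)=\ZZ$ is what makes the lattice structure tractable. If you intend to write out a full proof, you should consult \cite{MR1619802} directly, since the paper under review treats the result as a black box.
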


Our application of this theorem is the following.   The proof is the same as \cite{MR1209248}*{Lemma~3.2} except using the above refined theorem and considering higher prime powers.
\begin{prop} \label{P:MW approach}
There are absolute constants $c$ and $\kappa$ with the following property.   Suppose that $E$ is a non-CM elliptic curve defined over $\QQ$.  Then there is a positive integer
\[
M \leq c \big( \max\{1 , h(j_E) \} \big)^\kappa
\]
such that if $\rho_{E,\ell^n}(\Gal_\QQ)$ is contained in the normalizer of a Cartan subgroup with $\ell\geq 7$, then $\ell^n$ divides $M$.
\end{prop}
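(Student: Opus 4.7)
The plan is to follow the argument of \cite{MR1209248}*{Lemma~3.2}, with two modifications: invoke the refined isogeny theorem (Theorem~\ref{T:MW multiplicative}) in place of the minimum-degree bound used there, and track divisibility at the prime power $\ell^n$ rather than at the prime $\ell$. To begin, if $\rho_{E,\ell^n}(\Gal_\QQ) \subseteq C^+(\ell^n)$ then the quotient character $\Gal_\QQ \to C^+(\ell^n)/C(\ell^n) \cong \{\pm 1\}$ cuts out a field $K$ with $[K:\QQ] \leq 2$, and over $K$ the image $\rho_{E,\ell^n}(\Gal_K)$ lies inside $C(\ell^n)$. The argument then splits according to whether $C(\ell^n)$ is a split or non-split Cartan.

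In the split case, $E_K[\ell^n]$ decomposes as a $\Gal_K$-module into $L_1 \oplus L_2$ with each $L_i$ cyclic of order $\ell^n$ and $\Gal_K$-stable, and $\pi\colon E_K \to E^\ast := E_K/L_1$ is a $K$-rational isogeny of degree $\ell^n$. Apply Theorem~\ref{T:MW multiplicative} with $A = E$ over $\QQ$ (legitimate since $\End(E) = \ZZ$ as $E$ is non-CM), $A^\ast = E^\ast$ over $K$, and $D = 2$: this produces an isogeny $f\colon E^\ast \to E_K$ over $K$ whose degree $d$ divides
\[
b_1 := b(\QQ, E, 2) \leq c_1\bigl(\max\{1, h(E)\}\bigr)^{\kappa_1}
\]
for absolute constants $c_1, \kappa_1$. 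The composition $f \circ \pi\colon E_K \to E_K$ is an endomorphism; since $E$ is non-CM it equals $[m]$ for some $m \in \ZZ$, so $m^2 = \ell^n d$. Because the cyclic subgroup $L_1 \cong \ZZ/\ell^n\ZZ$ sits inside $\ker[m] = E_K[m] \cong (\ZZ/m\ZZ)^2$, we must have $\ell^n \mid m$, and then $m^2 = \ell^n d$ gives $\ell^n \mid d \mid b_1$.

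In the non-split case, $E_K[\ell^n]$ admits only the subgroups $E_K[\ell^j]$ as $\Gal_K$-stable submodules, so no $K$-rational $\ell^n$-isogeny of $E_K$ exists and one must pass to a higher-dimensional abelian variety. I would take $A = E \times E^\varepsilon$, where $E^\varepsilon$ is the quadratic twist of $E$ by $\varepsilon$; over $K$ one has $A_K \cong E_K \times E_K$. Choose $\omega \in \calA$ whose image in the non-split Cartan $(\calA/\ell^n)^\times$ is a unit of trace zero over $\ZZ_\ell/\ell^n\ZZ$ (for instance $\omega$ with $\omega^2$ a $\ZZ_\ell$-unit non-residue). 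Then $H := \{(P, \omega P) : P \in E_K[\ell^n]\}$ is a subgroup of $A_K[\ell^n]$ of order $\ell^{2n}$ and exponent $\ell^n$, and the tracelessness of $\omega$ ensures $H$ descends to a $\Gal_\QQ$-stable subgroup of $A[\ell^n]$. The quotient $A \to A/H$ is then a $\QQ$-rational isogeny of degree $\ell^{2n}$, and Theorem~\ref{T:MW multiplicative} (which applies since $E$ and $E^\varepsilon$ are non-isogenous over $\QQ$ with trivial endomorphism rings) yields an isogeny $A/H \to A$ over $\QQ$ of degree dividing $b_2 := b(\QQ, A, 1) \leq c_2\bigl(\max\{1, h(E)\}\bigr)^{\kappa_2}$. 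The composition $A \to A/H \to A$ is a $\QQ$-endomorphism of $A$, hence of diagonal form, and the exponent-$\ell^n$ constraint on $H$ forces each diagonal entry to be divisible by $\ell^n$; the degree relation then gives $\ell^{2n} \mid b_2$, in particular $\ell^n \mid b_2$.

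Finally, setting $M := b_1 b_2$ gives $M \leq c\bigl(\max\{1, h(E)\}\bigr)^{\kappa}$ with absolute $c, \kappa$, and the standard comparison $h(E) \ll \max\{1, h(j_E)\}$ between the semistable Faltings height of $E$ and the logarithmic height of its $j$-invariant converts this into the advertised bound in terms of $h(j_E)$. The hard part will be the non-split case: one must engineer the subgroup $H$ so that its \emph{exponent}, not merely its order, is $\ell^n$, which is precisely why the traceless-unit $\omega$ is required; without this, the determinant calculation would deliver divisibility only by $\ell$, not by $\ell^n$. The hypothesis $\ell \geq 7$ enters both to guarantee the existence of such an $\omega$ and to keep the group-theoretic structure of the Cartan subgroups well-behaved, as in the earlier sections of the paper.
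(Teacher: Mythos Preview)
There is a genuine gap in the non-split case. The abelian variety $A = E \times E^\varepsilon$ depends on the quadratic character $\varepsilon$, which in turn depends on $\ell$. Consequently $b_2 = b(\QQ, E \times E^\varepsilon, 1)$ is not a single integer but varies with $\ell$. The \emph{bound} $b_2 \leq c_2\bigl(\max\{1,h(E)\}\bigr)^{\kappa_2}$ is uniform (since the semistable Faltings height satisfies $h(E^\varepsilon)=h(E)$), but this only shows $\ell^n \leq c_2\bigl(\max\{1,h(E)\}\bigr)^{\kappa_2}$ for each individual $\ell^n$; it does \emph{not} produce one integer $M$ divisible by all such $\ell^n$, which is what the proposition demands. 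You cannot rescue this by taking $M$ to be the product of the $b_2(\varepsilon)$ over all relevant $\varepsilon$, because the number of quadratic characters unramified outside the bad primes of $E$ is $2^{\omega(N)}$, which is not controlled by $h(j_E)$. A secondary issue: if $\varepsilon$ happens to be trivial (i.e.\ the image already lies in the Cartan itself, which the hypotheses do not exclude), then $E^\varepsilon=E$ and your appeal to Theorem~\ref{T:MW multiplicative} with ``$E$ and $E^\varepsilon$ non-isogenous'' collapses.

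The paper's proof sidesteps both problems by working with the \emph{fixed} variety $A=E\times E$ over $\QQ$ and $D=2$, so that $b(\QQ,E\times E,2)$ is a single integer serving as $M$. The subgroup is $\Gamma=\{(x,\alpha x):x\in E[\ell^n]\}$ over the (varying) quadratic field $K$, with $\alpha$ a non-scalar element of $\rho_{E,\ell^n}(\Gal_K)$; such $\alpha$ exists for $\ell\geq 7$ by Serre's Lemme~18$'$. Stability of $\Gamma$ under $\Gal_K$ uses only that the Cartan is abelian. The back-isogeny composed with the quotient is then an element $\left(\begin{smallmatrix}a&b\\c&d\end{smallmatrix}\right)\in M_2(\ZZ)=\End_K(E\times E)$ killing $\Gamma$, and the non-scalarness of $\alpha\pmod\ell$ forces $\ell^n\mid a,b,c,d$, whence $\ell^{2n}\mid b(\QQ,E\times E,2)$. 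Your argument is easily repaired along these lines --- drop the twist, take $A=E\times E$ with $D=2$, and use any non-scalar $\omega\in C(\ell^n)$ to define $H$ over $K$ --- but once you do this the split/non-split dichotomy evaporates and you have recovered the paper's uniform argument.
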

\begin{proof}
Suppose that $\ell \geq 7$ is a prime for which $\rho_{E,\ell^n}(\Gal_\QQ)$ is contained in the normalizer of a Cartan subgroup.  There is a quadratic extension $K$ of $\QQ$ such that $\rho_{E,\ell^n}(\Gal(\Qbar/K))$ is contained in a Cartan subgroup of $\GL_2(\ZZ/\ell^n\ZZ)$.  By \cite{MR644559}*{Lemme 18'}, the image of $\rho_{E,\ell}(\Gal_\QQ)$ in $\PGL_2(\ZZ/\ell\ZZ)$ contains an element of order at least $(\ell-1)/4$, and in particular of order greater than $1$ since $\ell\geq 7$.   Therefore, $\rho_{E,\ell^n}(\Gal(\Qbar/K))$ contains an element $\alpha$ for which $\alpha \pmod{\ell}$ is not a scalar matrix.   

Let $\Gamma$ be the subgroup of $A:=E\times E$ consisting of pairs of points $(x, \alpha x)$ with $x\in E[\ell^n]$.   The group $\Gamma$ is defined over $K$, since an arbitrary $\sigma\in \Gal(\Qbar/K)$ takes $(x, \alpha x)$ to $(\rho_{E,\ell^n}(\sigma) x , \rho_{E,\ell^n}(\sigma) \alpha x)$, which is $(y, \alpha y)$ for $y=\rho_{E,\ell^n}(\sigma) x$ (we have used that $\rho_{E,\ell^n}(\Gal(\Qbar/K))$ is commutative).  Therefore, the abelian variety $A^* := A/\Gamma$ and the natural isogeny $\psi \colon A \to A^*$ are both defined over $K$.  By Theorem~\ref{T:MW multiplicative}, there is an isogeny $\phi\colon A^* \to A$ defined over $K$ whose degree divides $b(\QQ, E\times E, 2)$.   The composition $\varepsilon:=\phi\circ \psi$ is an endomorphism of $A=E\times E$.   Since $E$ is non-CM, there are integers $a,b,c,d\in \ZZ$ such that $\varepsilon(x,y) =(a x + by, cx+d y)$.  Since $\Gamma\subseteq \ker(\varepsilon)$, we have $ax + b \alpha x = 0$ and $cx+d \alpha x$ for all $x\in E[\ell^n]$.   Equivalently, $a + b \alpha\equiv 0$ and $c + d\alpha \equiv 0 \pmod{\ell^n}$.  Since $\alpha \pmod{\ell}$ is non-scalar, we deduce that $\ell^n$ divides $a$, $b$, $c$ and $d$.  Therefore, $\ell^{4n}$ divides $\deg \varepsilon =  (ad-bc)^2$.  Since $\deg \phi = |\Gamma| = \ell^{2n}$ and $\deg \varepsilon = \deg\phi \cdot \deg \psi$, we find that $\ell^{2n}$ divides $\deg \psi$ and hence also $b(\QQ,E\times E,2)$.    

So we shall take $M=b(\QQ,E\times E,2)$, and it remain to prove that $M$ has the desired upper bound.   Since $b(\QQ,E\times E,2) \leq c(\max\{1, h(E\times E)\} )^\kappa$ for absolute constants $c$ and $\kappa$, it suffices to note that $h(E\times E)=2 h(E)$ and that $h(E)\ll \max\{1,h(j_E)\}$ \cite{MR861979}*{Proposition 2.1}.      
\end{proof}

\section{Proof of Proposition~\ref{P:put together} and Theorem~\ref{T:main}} \label{S:main proof}

We first prove the following lemma to deal with the small primes.

\begin{lemma}   \label{L:small primes}
For each prime $\ell$, there is an integer $e(\ell)\geq 1$, depending only on $\ell$, such that $\rho_{E,\ell^\infty}(\Gal(\Qbar/\QQ^\cyc))$ contains the group $\{A\in \SL_2(\ZZ_\ell) : A \equiv I \pmod{\ell^{e(\ell)}}\}$ for all non-CM elliptic curves $E$ over $\QQ$.
\end{lemma}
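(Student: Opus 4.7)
The plan is to reduce to a finite case analysis via Faltings' theorem on several modular curves, with the dichotomy of Proposition~\ref{P:dichotomy main} doing the heavy lifting for large primes.

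A preliminary observation I would use throughout: for any twist $E^\chi$ of $E/\QQ$ by a Dirichlet character $\chi$, one has $\rho_{E^\chi,\ell^\infty} = \rho_{E,\ell^\infty}\otimes\chi$, and $\chi$ factors through $\Gal(\QQ^\cyc/\QQ)$, hence is trivial on $\Gal(\Qbar/\QQ^\cyc)$. Therefore $\rho_{E^\chi,\ell^\infty}$ and $\rho_{E,\ell^\infty}$ agree on $\Gal(\Qbar/\QQ^\cyc)$, so the depth of any congruence subgroup contained in $\rho_{E,\ell^\infty}(\Gal(\Qbar/\QQ^\cyc))$ depends only on $j(E)$, not on the choice of model.

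First suppose $\ell > 17$ and $\ell \neq 37$, so Proposition~\ref{P:dichotomy main} applies. For a level $n_0 = n_0(\ell)$ to be chosen, that proposition gives, for every non-CM $E/\QQ$, one of two alternatives: either (a) $\rho_{E,\ell^{n_0}}(\Gal_\QQ)$ lies in the normalizer of a Cartan subgroup of $\GL_2(\ZZ/\ell^{n_0}\ZZ)$ --- placing $j(E)$ on one of the modular curves $X^+_{sp}(\ell^{n_0})$ or $X^+_{ns}(\ell^{n_0})$ --- or (b) $\rho_{E,\ell^\infty}(\Gal_\QQ) \supseteq I + \ell^{4n_0}M_2(\ZZ_\ell)$, in which case Lemma~\ref{L:dichotomy supplement} already yields $\rho_{E,\ell^\infty}(\Gal(\Qbar/\QQ^\cyc)) \supseteq \{A \in \SL_2(\ZZ_\ell) : A \equiv I \pmod{\ell^{4n_0}}\}$. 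I would choose $n_0$ large enough (using the standard genus formulas for these level structures) that both modular curves in (a) have genus at least $2$; Faltings' theorem then restricts case (a) to finitely many non-CM rational $j$-invariants $j_1,\ldots,j_k$. For each $j_i$, fix any representative $E_i/\QQ$ with $j(E_i) = j_i$; by Serre's open image theorem $\rho_{E_i,\ell^\infty}(\Gal(\Qbar/\QQ^\cyc))$ is open in $\SL_2(\ZZ_\ell)$, so contains a principal congruence subgroup of some level $\ell^{m_i}$. Setting $e(\ell) := \max(4n_0, m_1, \ldots, m_k)$ then works for every non-CM $E/\QQ$ by the twist-invariance above.

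For the finitely many small primes $\ell \in \{2,3,5,7,11,13,17,37\}$, it suffices to produce $e(\ell)$ separately for each. The same philosophy applies: for each fixed $\ell$, the open subgroups $H \subseteq \GL_2(\ZZ/\ell^n\ZZ)$ (with surjective determinant and containing $-I$) that could arise as $\rho_{E,\ell^n}(\Gal_\QQ)$ correspond to $\QQ$-rational points on modular curves $X_H$, whose genera tend to infinity as $n$ grows and as $[\GL_2(\ZZ/\ell^n\ZZ):H]$ grows. For $n_0(\ell)$ chosen so that all $H$ failing to contain a suitably shallow congruence subgroup give $X_H$ of genus $\geq 2$, Faltings' theorem again isolates finitely many sporadic non-CM $j$-invariants (handled by Serre's theorem applied individually), while the remaining $E$'s have images large enough at level $\ell^{n_0}$ that a commutator argument along the lines of Lemma~\ref{L:filtration basics}(v) and the proof of Lemma~\ref{L:dichotomy supplement} produces the congruence subgroup at controlled depth.

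The main obstacle is this small-prime step, particularly at $\ell=2$ and $\ell=3$, where the subgroup lattice of $\GL_2(\ZZ_\ell)$ is substantially richer and even the implication ``$\rho_{E,\ell}$ surjective $\Rightarrow$ $\rho_{E,\ell^\infty}$ surjective'' can fail, forcing one to work at a deeper level and carry out a finite but delicate classification of the exceptional possibilities. Nonetheless the problem is genuinely finite (eight primes, each with finitely many sporadic $j$-invariants), so the lemma reduces to Serre's open image theorem applied to a controlled finite list of elliptic curves.
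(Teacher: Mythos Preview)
Your proposal is correct and follows essentially the same line as the paper. Both arguments rest on the finiteness of congruence subgroups of $\SL_2(\ZZ)$ of genus $\leq 1$ (the paper cites Dennin), Faltings' theorem applied to the finitely many modular curves $X_H$ whose $H$ fails to contain a shallow principal congruence subgroup, and the commutator step of Lemma~\ref{L:filtration basics}(v) to pass from $\rho_{E,\ell^\infty}(\Gal_\QQ)$ to $\rho_{E,\ell^\infty}(\Gal(\Qbar/\QQ^\cyc))$. The only differences are cosmetic: the paper runs this argument uniformly for every prime $\ell$ and never invokes Proposition~\ref{P:dichotomy main} here as you do for large $\ell$; in particular your concern that $\ell\in\{2,3\}$ requires a ``delicate classification'' is misplaced, since Dennin's finiteness result plus Faltings dispatches every fixed $\ell$ with no case analysis whatsoever. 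Your twist-invariance remark is a clean way of making explicit what the paper leaves implicit when it reduces the exceptional $j$-invariants to finitely many ``excluded $\Qbar$-isogeny classes''.
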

\begin{proof}
Fix a prime $\ell$.   For each subgroup $H$ of $\GL_2(\ZZ/\ell^n\ZZ)$ with $\det(H)=(\ZZ/\ell^n\ZZ)^\times$, we can define a modular curve $X_H$ over $\QQ$, which comes with a morphism $\pi\colon X_H \to \PP^1_\QQ$ to the $j$-line \cite{MR0337993}*{IV-3}.     If $E$ is an elliptic curve over $\QQ$ with $\rho_{E,\ell^n}(\Gal_\QQ)\subseteq \GL_2(\ZZ/\ell^n\ZZ)$ conjugate to a subgroup of $H$, then there is a rational point $P\in X_H(\QQ)$ such that $\pi(P)$ equals the $j$-invariant of $E$.

Let $\Gamma_H$ be the congruence subgroup of $\SL_2(\ZZ)$ consisting of those $A\in \SL_2(\ZZ)$ for which $A$ modulo $\ell^n$ belongs to $H$.  The genus of $X_H$ is equal to the genus of $\Gamma_H $ (which is the genus of the Riemann surface obtained by the usual quotient of the upper-half plane by the group $\Gamma_H$ acting via linear fractional transformations and adding cusps).  There are only finitely many congruence subgroups of $\SL_2(\ZZ)$ of genus  $0$ or $1$ \cite{MR0384698}.  Hence, there exists an integer $f=f(\ell)\geq 1$ such that if $H$ is a subgroup of $\GL_2(\ZZ/\ell^{f+2}\ZZ)$ for which $X_H$ has genus at most $1$, then $H\supseteq \{ A \in \SL_2(\ZZ/\ell^{f+2}\ZZ) : A\equiv I \pmod{\ell^f} \}$.  Apply Faltings' theorem (originally, Mordell's conjecture) to the modular curves $X_H$ with groups $H\subseteq\GL_2(\ZZ/\ell^{f+2}\ZZ)$ not containing $\{ A \in \SL_2(\ZZ/\ell^{f+2}\ZZ) : A\equiv I \pmod{\ell^f} \}$, we find that there is a \emph{finite} set $J\subseteq \QQ$ such $\rho_{E,\ell^{f+2}}(\Gal_\QQ)\supseteq \{ A \in \SL_2(\ZZ/\ell^{f+2}\ZZ) : A\equiv I \pmod{\ell^f} \}$ for all elliptic curves $E/\QQ$ with $j(E)\notin J$.   If $E/\QQ$ satisfies $j(E)\not\in J$, then Lemma~\ref{L:filtration basics}(v) implies that $\rho_{E,\ell^\infty}(\Gal(\Qbar/\QQ^\cyc))\supseteq \{A \in \SL_2(\ZZ_\ell): A \equiv I \pmod{\ell^{2f+2}} \}$  (since $\QQ^\cyc$ is the maximal abelian extension of $\QQ$, $\rho_{E,\ell^\infty}(\Gal(\Qbar/\QQ^\cyc))$ is the commutator subgroup of $\rho_{E,\ell^\infty}(\Gal_\QQ)$).    The desired $e(\ell)$ is chosen such that $e(\ell)\geq 2f+2$ and such that the lemma holds for the finite number of excluded $\Qbar$-isogeny classes of non-CM elliptic curves over $\QQ$.
\end{proof}

\begin{proof}[Proof of Proposition~\ref{P:put together}]
Define the set $Q=\{ \ell : \ell \leq 17\} \cup \{37\}$ and let $\mathcal{P}$ be the (finite) set of primes $\ell\notin Q$ for which $\rho_{E,\ell}$ is not surjective.   Define the number field $K= \bigcup_{\ell \in \mathcal{P}\cup Q} \QQ( E[\ell])$; it is a Galois extension of $\QQ$.      Define the group $S=\rho_{E}(\Gal(\Qbar/K\QQ^\cyc))$.   Since $\det\circ \rho_E\colon \Gal_\QQ\to \Zhat^\times$ is the cyclotomic character and is thus surjective,  we have
\begin{align} \label{E:GL2 bound to nilpotent}
[\GL_2(\Zhat):\rho_E(\Gal_\QQ)] & = [\SL_2(\Zhat):\rho_E(\Gal(\Qbar/\QQ^\cyc))] =\frac{1}{[K: K \cap \QQ^\cyc]} [\SL_2(\Zhat) : S].
\end{align}
So it remains to bound $[\SL_2(\Zhat) : S]$.  For each prime $\ell$, let $S_\ell$ be the image of $S$ under the projection to $\SL_2(\ZZ_\ell)$.  We now describe the groups $S_\ell$.

\noindent\textbf{Case 1:} Let $\ell$ be a prime in $\mathcal{P}$.

Define $f(\ell) = \ord_\ell(M)+1$.   Since $\ell^{f(\ell)} \nmid M$, the group $\rho_{E,\ell^{f(\ell)}}(\Gal_\QQ)$ is not contained in the normalizer of a Cartan subgroup.  By Proposition~\ref{P:dichotomy main}, we must have $\rho_{E,\ell^\infty}(\Gal_\QQ) \supseteq I+\ell^{4f(\ell)}M_2(\ZZ_\ell)$   and hence  $\rho_{E,\ell^\infty}(\Gal(\Qbar/\QQ^\cyc))$ contains $\{A \in \SL_2(\ZZ_\ell) : A \equiv I \pmod{\ell^{4f(\ell)}} \}$ by Lemma~\ref{L:dichotomy supplement}.  Therefore,
\[
[\SL_2(\ZZ_\ell): S_\ell ] \leq  \big(\ell^{4f(\ell)}\big)^3 \ell^{\ord_\ell([K:K\cap \QQ^\cyc])} = \big(\ell^{\ord_\ell(M)+1}\big)^{12} \ell^{\ord_\ell([K:K\cap \QQ^\cyc])} . 
\]
The prime $\ell$ divides $M$, so $[\SL_2(\ZZ/\ZZ_\ell): S_\ell ] \leq  \ell^{\ord_\ell([K:K\cap \QQ^\cyc])}  \big(\ell^{\ord_\ell(M)}\big)^{24}$.

\noindent\textbf{Case 2:} Let $\ell$ be a prime in $Q$.

By Lemma~\ref{L:small primes}, there is an integer $e(\ell)\geq 1$, not depending on $E$, such that $\rho_{E,\ell^\infty}(\Gal(\Qbar/\QQ^\cyc))$ contains $\{A \in \SL_2(\ZZ_\ell) : A \equiv I \pmod{\ell^{e(\ell)}} \}$.  Therefore,
\[
[\SL_2(\ZZ/\ZZ_\ell): S_\ell ] \leq  \ell^{3e(\ell)} \ell^{\ord_\ell([K:K\cap \QQ^\cyc])}.
\]
\noindent\textbf{Case 3:} Let $\ell$ be a prime not in $\mathcal{P}\cup Q$.

We claim that $S_\ell = \SL_2(\ZZ_\ell)$.  Since $\ell$ is not in $\mathcal{P}$, we have $\rho_{E,\ell}(\Gal_\QQ)=\GL_2(\ZZ/\ell\ZZ)$.   No composition series of the Galois group $\Gal(K/\QQ)$ contains the simple group $\SL_2(\ZZ/\ell\ZZ)/\{\pm I\}$ (this follows from the description of the sets ``$\text{Occ}(\GL_2(\ZZ_p))$'' in \cite{MR1484415}*{IV-25}).  Therefore, the group $\rho_{E,\ell}(\Gal(\Qbar/K))$, and hence also $H:=\rho_{E,\ell}(\Gal(\Qbar/K\QQ^{\cyc}))\subseteq \SL_2(\ZZ/\ell\ZZ)$, contains the group $\SL_2(\ZZ/\ell\ZZ)/\{\pm I\}$ in its composition series.   The image of $H$ under the quotient $\SL_2(\ZZ/\ell\ZZ)\to \SL_2(\ZZ/\ell\ZZ)/\{ \pm I\}$ is thus surjective, so $H=\SL_2(\ZZ/\ell\ZZ)$ by \cite{MR1484415}*{IV-23~Lemma~2}.  By \cite{MR1484415}*{IV-23~Lemma~3}, we have $\rho_{E,\ell^\infty}(\Gal(\Qbar/K\QQ^{\cyc}))=\SL_2(\ZZ_\ell)$.\\

Returning to the group $S$, we may view $S$ as a closed subgroup of $\prod_\ell S_\ell \subseteq \prod_\ell \SL_2(\ZZ_\ell) = \SL_2(\Zhat)$.   We will show that $S= \prod_{\ell} S_\ell$.    For each finite set of primes $B$, let $p_B\colon S\to S_B:=\prod_{\ell \in B} S_\ell$ be the projection.    We have $p_B(S)=S_B$ for $B= \mathcal{P} \cup Q$ since the groups $S_\ell$ with $\ell \in B$ are pro-$\ell$ by our choice of $K$ (the group $p_{B}(S)$ is pro-nilpotent so is a product of its Sylow subgroups).   Now assume that $B$ is a set of primes containing $\mathcal{P} \cup Q$ for which $p_B(S)=S_B$ and  fix a prime $\ell_0\notin B$.  We claim that $p_{B \cup \{\ell_0\}}(S)=S_{B \cup \{\ell_0\}}$.   Define $N=p_{B\cup \{\ell_0\}}\big(p_{\{\ell_0\}}^{-1}(1)\big)$ and $N'=p_{B\cup \{\ell_0\}}\big(p_{B}^{-1}(1)\big)$ which are closed normal subgroups of $p_{B\cup\{\ell_0\}}(S)$ that we may also view as subgroups of $S_B$ and $S_{\ell_0}$, respectively.   By Goursat's lemma, we find that the image of $S$ in $S_B/N \times S_{\ell_0}/N'$ is the graph of an isomorphism $S_B/N \cong S_{\ell_0}/N'$ of profinite groups (cf.~\cite{MR0457455}*{\S2}, it is clear that the isomorphism and its inverse are continuous).     The groups $S_{\ell_0}=\SL_2(\ZZ_{\ell_0})$ and $S_B=\prod_{\ell\in B} S_\ell$ have no isomorphic non-abelian simple groups in their composition series (again see \cite{MR1484415}*{IV-25} and note that $\ell$-groups are solvable), so the isomorphism $S_B/N \cong S_{\ell_0}/N'$ is an isomorphism of solvable groups.    However, there are no non-trivial abelian quotients of $\SL_2(\ZZ_{\ell_0})$ \cite{MR2118760}*{Appendix Corollary~7}, so $N=S_B$ and $N'=S_{\ell_0}$.  Thus $p_{B\cup \{\ell_0\}}(S)=S_B \times S_{\ell_0} = S_{B\cup \{\ell_0\}}$ as claimed.  By induction, we have $p_B(S)=S_B$ for any sets of primes $B$ containing $\mathcal{P} \cup Q$, and since $S$ is profinite we deduce that $S=\prod_{\ell} S_\ell$.

Returning to the index and using the three cases, we have
\[
[\SL_2(\Zhat) : S] = \prod_{\ell} [\SL_2(\ZZ_\ell): S_\ell ] \leq [K:K\cap \QQ^\cyc] \prod_{\ell\in Q} \ell^{3e(\ell)}  \big(\prod_{\ell\in \mathcal{P}} \ell^{\ord_\ell(M)}\big)^{24}. 
\]
By (\ref{E:GL2 bound to nilpotent}), we conclude that $[\GL_2(\Zhat):\rho_E(\Gal_\QQ)]  \leq  C M^{24}$ where the constant $C:=\prod_{\ell\in Q} \ell^{3e(\ell)}$ is absolute.  
\end{proof}

Theorem~\ref{T:main} is now easy.  Part (i) follows immediately by combining Proposition~\ref{P:put together} and Proposition~\ref{P:MW approach}.   Parts (ii) and (iii) follow by combining Proposition~\ref{P:put together} with Proposition~\ref{P:Serre approach}(ii) and (i), respectively.

\begin{remark}\label{R:uniform index}
Suppose that there exists an absolute constant $c$ such that $\rho_{E,\ell}(\Gal_\QQ)=\GL_2(\ZZ/\ell\ZZ)$ for all non-CM elliptic curves $E$ over $\QQ$ and primes $\ell>c$.   We may assume that $c$ is the smallest such constant.  For each prime $\ell \leq c$, let $e(\ell)$ be a positive integer as in Lemma~\ref{L:small primes}.   The integer $M=\prod_{\ell \leq c} \ell^{e(\ell)}$ satisfies the condition of Proposition~\ref{P:put together}, so $[ \GL_2(\Zhat) : \rho_E(\Gal_\QQ) ] \leq C M^{24}$ where $C$ and $M$ are absolute constants. 
\end{remark}

\section{Lang-Trotter constants} \label{S:LT}

Let $E$ be an non-CM elliptic curve over $\QQ$.   The predicted constant $C_{E,r}$ of Conjecture~\ref{C:LT} is 
\[
C_{E,r} = \frac{2}{\pi}  \lim_{m \to \infty}  m \frac{|\{ A \in \rho_{E,m}(\Gal_\QQ) : \tr(A)\equiv r \pmod{m} \}|}{|\rho_{E,m}(\Gal_\QQ)|}.
\]
where the limit is over all natural numbers $m$ ordered by divisibility \cite{MR0568299}*{I~\S4}.  Serre's open image theorem is used to prove that this limit converges.  The constant $C_{E,r}$ for CM elliptic curves is not studied in \cite{MR0568299} but can be found in \cite{MR2534114}*{\S2.2}.     To control the constants $C_{E,r}$ for non-CM $E/\QQ$, we will need the following easy lemma.

\begin{lemma} \label{L:LT inequality}  Let $E$ be a non-CM elliptic curve over $\QQ$.  Then $C_{E,r} \leq  [\GL_2(\Zhat): \rho_E(\Gal_\QQ)] \cdot C_r$ where $C_r$ is the constant of Theorem~\ref{T:DP}.
\end{lemma}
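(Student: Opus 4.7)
The plan is to rewrite $C_r$ as a limit of the same shape as the one defining $C_{E,r}$, with $\rho_{E,m}(\Gal_\QQ)$ replaced by the full group $\GL_2(\ZZ/m\ZZ)$. Specifically, I would first establish the identity
\[
C_r \;=\; \frac{2}{\pi} \lim_{m \to \infty} m \cdot \frac{\big|\{A \in \GL_2(\ZZ/m\ZZ) : \tr(A) \equiv r \pmod m\}\big|}{|\GL_2(\ZZ/m\ZZ)|},
\]
with the limit taken over natural numbers ordered by divisibility. Because both sides factor as Euler products, this reduces to checking, for each prime $\ell$, that $\lim_{n \to \infty} \ell^n \cdot \Pr_{A \in \GL_2(\ZZ/\ell^n\ZZ)}[\tr(A) \equiv r \pmod{\ell^n}]$ equals $(1 - \ell^{-2})^{-1}$ when $\ell \mid r$ and equals $\ell(\ell^2 - \ell - 1)/((\ell-1)(\ell^2 - 1))$ when $\ell \nmid r$. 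This is a standard computation: count matrices in $M_2(\ZZ/\ell^n\ZZ)$ with prescribed trace (there are $\ell^{3n}$ of them) and then subtract off the non-invertible ones, splitting into cases according to whether $r$ is an $\ell$-adic unit.

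With that identity in hand, write $G_m = \rho_{E,m}(\Gal_\QQ)$. The inclusion $G_m \subseteq \GL_2(\ZZ/m\ZZ)$ immediately gives
\[
\big|\{A \in G_m : \tr(A) \equiv r\}\big| \;\leq\; \big|\{A \in \GL_2(\ZZ/m\ZZ) : \tr(A) \equiv r\}\big|.
\]
Dividing both sides by $|G_m|$ and multiplying and dividing by $|\GL_2(\ZZ/m\ZZ)|$ on the right, I would rewrite this as
\[
\frac{|\{A \in G_m : \tr(A) \equiv r\}|}{|G_m|} \;\leq\; [\GL_2(\ZZ/m\ZZ) : G_m] \cdot \frac{|\{A \in \GL_2(\ZZ/m\ZZ) : \tr(A) \equiv r\}|}{|\GL_2(\ZZ/m\ZZ)|}.
\]

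To finish, I would use the uniform bound $[\GL_2(\ZZ/m\ZZ) : G_m] \leq [\GL_2(\Zhat) : \rho_E(\Gal_\QQ)]$, which holds for every $m$ since the latter index dominates all of the finite-level indices. Multiplying the displayed inequality by $m$, multiplying by $2/\pi$, and passing to the limit over $m$ (ordered by divisibility) yields
\[
C_{E,r} \;\leq\; [\GL_2(\Zhat) : \rho_E(\Gal_\QQ)] \cdot C_r,
\]
as desired. The main obstacle is the bookkeeping for the Euler-factor identification of $C_r$; once that is in place, the inequality itself is a one-line manipulation and the uniformity of the index bound makes the passage to the limit immediate.
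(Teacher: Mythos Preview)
Your proposal is correct and follows essentially the same route as the paper: bound the numerator by enlarging $G_m$ to $\GL_2(\ZZ/m\ZZ)$, factor out the index, and identify the resulting limit with $C_r$. The only cosmetic differences are that the paper cites Lang--Trotter \cite{MR0568299}*{I~\S4} (the case $M=1$) for the identification of $C_r$ rather than sketching the Euler-factor count, and it uses a $\limsup$ together with the eventual equality $[\GL_2(\ZZ/m\ZZ):G_m]=[\GL_2(\Zhat):\rho_E(\Gal_\QQ)]$ for sufficiently divisible $m$, where you use the uniform inequality of indices.
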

\begin{proof}
We have 
\begin{align*}
C_{E,r} &\leq \frac{2}{\pi}  \limsup_{m\to \infty} \, m \frac{|\{A \in \GL_2(\ZZ/m\ZZ) : \tr(A)\equiv r \pmod{m}\}|}{|\rho_{E,m}(\Gal_\QQ)|} \\
&=  [\GL_2(\Zhat): \rho_E(\Gal_\QQ)] \cdot \frac{2}{\pi}  \limsup_{m\to \infty}  \, m \frac{|\{A \in \GL_2(\ZZ/m\ZZ) : \tr(A)\equiv r \pmod{m}\}|}{|\GL_2(\ZZ/m\ZZ)|}
\end{align*}
and this last line equals $ [\GL_2(\Zhat): \rho_E(\Gal_\QQ)] \cdot C_r$ by the computations in \cite{MR0568299}*{I~\S4} (with $M=1$).
\end{proof}

We will now prove Theorem~\ref{T:LT}. 

 Let $\calS(A,B)$ be the set of $(a,b)\in \calF(A,B)$ for which $[\GL_2(\Zhat): \rho_{E(a,b)}(\Gal_\QQ)] =2$ (such elliptic curves are called \emph{Serre curves}) and define $\calN(A,B)=\calF(A,B)-\calS(A,B)$.    By \cite{MR2534114}*{Theorem 10} with $k=1$, we have
\begin{equation} \label{E:LT 1}
\frac{1}{|\calF(A,B)|} \sum_{(a,b) \in \calS(A,B)} C_{E(a,b),r} = C_r  + O\Big(\frac{1}{A} + \frac{(\log B)^{1/2}(\log A)^{7/2}}{B^{1/2}}\Big)
\end{equation}
where the implicit constant depends only on $r$.  

Now fix a pair $(a,b)\in\calN(A,B)$.  If $E(a,b)$ is CM, then we have $C_{E(a,b),r}\ll_r 1$ as noted in \cite{MR2534114}*{Lemma~23} (the key point being that there are only finitely many $\Qbar$-isogeny classes of CM curves over $\QQ$).  If $E(a,b)$ is non-CM, then by Lemma~\ref{L:LT inequality} and Theorem~\ref{T:main}(i), we have
\[
C_{E(a,b),r} \ll_r  \big( \max\{1, h(j_{E(a,b)})\} \big)^\gamma
\]
where $\gamma$ is an absolute constant.   Since $(a,b) \in\ZZ^2$ with $|a|\leq A$ and $|b|\leq B$, we have 
\[
h(j_{E(a,b)}) = h([1728(4a)^3, -16(4a^3+27b^2)]) \ll \log(\max\{A, B\}) \leq \log(AB)
\]
and hence $C_{E(a,b),r} \ll_r \log^\gamma(AB)$.  Therefore,
\begin{align*}
&\frac{1}{|\calF(A,B)|}\sum_{(a,b) \in \calN(A,B)} C_{E(a,b), r} \ll_r \frac{|\calN(A,B)|}{|\calF(A,B)|} \log^\gamma(AB).
\end{align*}
Jones \cite{Jones-AAECASC}*{Theorem 4} has shown that there is an absolute constant $\beta>0$ such that
\[
\frac{|\calN(A,B)|}{|\calF(A,B)|} \ll \frac{\log^\beta(\min\{A, B\})}{\sqrt{\min\{A, B\}}},
\]
so
\begin{equation} \label{E:LT 2}
\frac{1}{|\calF(A,B)|} \sum_{(a,b) \in \calN(A,B)} C_{E(a,b), r} \ll_r \frac{\log^{\beta+\gamma}(AB)}{\sqrt{\min\{A, B\}}}.
\end{equation}
Theorem~\ref{T:LT} follows by adding (\ref{E:LT 1}) and (\ref{E:LT 2}) together, and taking $\delta =\max\{4, \beta+\gamma\}$.

\begin{bibdiv}
\begin{biblist}
\bib{Bilu-Parent2011}{article}{
      author={Bilu, Yuri},
      author={Parent, Pierre},
       title={Serre's uniformity problem in the split {C}artan case},
        date={2011},
     journal={Ann. of Math. (2)},
     volume={173},
       pages={569--584},
}

\bib{MR1839918}{article}{
      author={Breuil, Christophe},
      author={Conrad, Brian},
      author={Diamond, Fred},
      author={Taylor, Richard},
       title={On the modularity of elliptic curves over {$\bold Q$}: wild
  3-adic exercises},
        date={2001},
     journal={J. Amer. Math. Soc.},
      volume={14},
      number={4},
       pages={843\ndash 939}, 
       label={BCDT},
}

\bib{MR2118760}{article}{
      author={Cojocaru, Alina~Carmen},
       title={On the surjectivity of the {G}alois representations associated to
  non-{CM} elliptic curves},
        date={2005},
     journal={Canad. Math. Bull.},
      volume={48},
      number={1},
       pages={16\ndash 31},
        note={With an appendix by Ernst Kani},
}

\bib{MR1677267}{article}{
      author={David, Chantal},
      author={Pappalardi, Francesco},
       title={Average {F}robenius distributions of elliptic curves},
        date={1999},
     journal={Internat. Math. Res. Notices},
      number={4},
       pages={165\ndash 183},
}

\bib{MR801925}{article}{
      author={Deligne, Pierre},
       title={Repr\'esentations {$l$}-adiques},
        date={1985},
     journal={Ast\'erisque},
      number={127},
       pages={249\ndash 255},
        note={Seminar on arithmetic bundles: the Mordell conjecture (Paris,
  1983/84)},
}

\bib{MR0337993}{incollection}{
      author={Deligne, P.},
      author={Rapoport, M.},
       title={Les sch\'emas de modules de courbes elliptiques},
        date={1973},
   booktitle={Modular functions of one variable, {II} ({P}roc. {I}nternat.
  {S}ummer {S}chool, {U}niv. {A}ntwerp, {A}ntwerp, 1972)},
   publisher={Springer},
     address={Berlin},
       pages={143\ndash 316. Lecture Notes in Math., Vol. 349},
}

\bib{MR0384698}{article}{
      author={Dennin, Joseph~B., Jr.},
       title={The genus of subfields of {$K(n)$}},
        date={1975},
     journal={Proc. Amer. Math. Soc.},
      volume={51},
       pages={282\ndash 288},
}

\bib{MR903384}{article}{
      author={Elkies, Noam~D.},
       title={The existence of infinitely many supersingular primes for every
  elliptic curve over {${\bf Q}$}},
        date={1987},
     journal={Invent. Math.},
      volume={89},
      number={3},
       pages={561\ndash 567},
}

\bib{MR2534114}{article}{
      author={Jones, Nathan},
       title={Averages of elliptic curve constants},
        date={2009},
     journal={Math. Ann.},
      volume={345},
      number={3},
       pages={685\ndash 710},
}

\bib{Jones-AAECASC}{article}{
      author={Jones, Nathan},
       title={Almost all elliptic curves are {S}erre curves},
        date={2010},
     journal={Trans. Amer. Math. Soc.},
      volume={362},
      number={3},
       pages={1547\ndash 1570},
}

\bib{MR1360773}{article}{
      author={Kraus, Alain},
       title={Une remarque sur les points de torsion des courbes elliptiques},
        date={1995},
     journal={C. R. Acad. Sci. Paris S\'er. I Math.},
      volume={321},
      number={9},
       pages={1143\ndash 1146},
}

\bib{MR0568299}{book}{
      author={Lang, Serge},
      author={Trotter, Hale},
       title={Frobenius distributions in {${\rm GL}_{2}$}-extensions},
      series={Lecture Notes in Mathematics, Vol. 504},
   publisher={Springer-Verlag},
     address={Berlin},
        date={1976},
        note={Distribution of Frobenius automorphisms in
  ${\rm{GL}}_{2}$-extensions of the rational numbers},
}

\bib{MR1619802}{article}{
      author={Masser, David},
       title={Multiplicative isogeny estimates},
        date={1998},
     journal={J. Austral. Math. Soc. Ser. A},
      volume={64},
      number={2},
       pages={178\ndash 194},
}

\bib{MR1209248}{article}{
      author={Masser, D.W.},
      author={W{\"u}stholz, G.},
       title={Galois properties of division fields of elliptic curves},
        date={1993},
     journal={Bull. London Math. Soc.},
      volume={25},
      number={3},
       pages={247\ndash 254},
}

\bib{MR0444670}{article}{
      author={Mazur, Barry},
       title={Rational points of abelian varieties with values in towers of
  number fields},
        date={1972},
     journal={Invent. Math.},
      volume={18},
       pages={183\ndash 266},
}

\bib{MR0457455}{article}{
      author={Ribet, Kenneth~A.},
       title={Galois action on division points of {A}belian varieties with real
  multiplications},
        date={1976},
     journal={Amer. J. Math.},
      volume={98},
      number={3},
       pages={751\ndash 804},
}

\bib{MR0387283}{article}{
      author={Serre, Jean-Pierre},
       title={Propri\'et\'es galoisiennes des points d'ordre fini des courbes
  elliptiques},
        date={1972},
     journal={Invent. Math.},
      volume={15},
      number={4},
       pages={259\ndash 331},
}

\bib{MR644559}{article}{
      author={Serre, Jean-Pierre},
       title={Quelques applications du th\'eor\`eme de densit\'e de
  {C}hebotarev},
        date={1981},
     journal={Inst. Hautes \'Etudes Sci. Publ. Math.},
      number={54},
       pages={323\ndash 401},
}

\bib{MR1484415}{book}{
      author={Serre, Jean-Pierre},
       title={Abelian {$l$}-adic representations and elliptic curves},
      series={Research Notes in Mathematics},
   publisher={A K Peters Ltd.},
     address={Wellesley, MA},
        date={1998},
      volume={7},
        note={With the collaboration of Willem Kuyk and John Labute, Revised
  reprint of the 1968 original},
}

\bib{MR861979}{incollection}{
      author={Silverman, Joseph~H.},
       title={Heights and elliptic curves},
        date={1986},
   booktitle={Arithmetic geometry ({S}torrs, {C}onn., 1984)},
   publisher={Springer},
     address={New York},
       pages={253\ndash 265},
}

\bib{MR0366930}{incollection}{
      author={V{\'e}lu, Jacques},
       title={Les points rationnels de {$X_{0}(37)$}},
        date={1974},
   booktitle={Journ\'ees {A}rithm\'etiques ({G}renoble, 1973)},
   publisher={Soc. Math. France},
     address={Paris},
       pages={169\ndash 179. Bull. Soc. Math. France M\'em., 37},
}

\end{biblist}
\end{bibdiv}

\end{document}